\def\reflb#1#2{\begingroup
    #2%
    \def\@currentlabel{#2}%
    \phantomsection\label{#1}\endgroup
}
\numberwithin{equation}{section}
\definecolor{darkred}{rgb}{1,0,0} 
\definecolor{darkgreen}{rgb}{0,0.6,0}
\definecolor{darkblue}{rgb}{0,0,0.8}
\newtheoremstyle{personal}%
{12pt}
{12pt}
{\itshape}
{}
{\bfseries}
{.}
{.5em}
{}
\theoremstyle{personal}%
\newtheorem{thm}{Theorem}[section]
\newtheorem{lem}[thm]{Lemma}
\newtheorem{prop}[thm]{Proposition}
\newtheorem{maintheorem}{Theorem}
\newtheorem{maincor}[maintheorem]{Corollary}
\theoremstyle{definition}
\newtheorem{Definition}[thm]{Definition} 
\newtheorem{rem}[thm]{Remark}
\newcommand{\vol}{\mathrm{vol}}
\newcommand{\hhbar}{\hbar} 
\newcommand{\hhtop}{h_{\mathrm{top}}}
\newcommand{\hhvol}{h_{\mathrm{vol}}} 
\newcommand{\interior}{\mathrm{int}}
\newcommand{\W}{W^{1,2}}
\newcommand{\Wloc}{W^{1,2}_{\mathrm{loc}}}
\newcommand{\N}{\mathds{N}}
\newcommand{\Z}{\mathds{Z}}
\newcommand{\R}{\mathds{R}}
\newcommand{\E}{\mathds{E}}
\newcommand{\F}{\mathds{F}}
\newcommand{\FF}{\mathcal{F}}
\newcommand{\PP}{\mathcal{P}}
\newcommand{\B}{\mathcal{B}}
\newcommand{\CC}{\mathcal{C}}
\newcommand{\NN}{\mathcal{N}}
\newcommand{\XX}{\mathcal{X}}
\newcommand{\YY}{\mathcal{Y}}
\newcommand{\WW}{\mathcal{W}}
\newcommand{\UU}{\mathcal{U}}
\newcommand{\VV}{\mathcal{V}}
\newcommand{\GG}{\mathcal{G}}
\newcommand{\EE}{\mathcal{E}}
\newcommand{\zerosection}{0\mbox{-}\mathrm{section}}
\newcommand{\crit}{\mathrm{crit}}
\newcommand{\id}{\mathrm{id}}
\newcommand{\im}{\mathrm{im}}
\newcommand{\xx}{\bm{x}}
\newcommand{\yy}{\bm{y}}
\newcommand{\vv}{\bm{v}}
\newcommand{\ww}{\bm{w}}
\newcommand{\eps}{\epsilon}
\DeclareMathOperator{\supp}{\mathrm{supp}} 
\DeclareMathOperator*{\injrad}{\mathrm{injrad}}
\renewcommand{\twoheadrightarrow}{\mathrel{\mathrlap{\rightarrow}\mkern-2.3mu\rightarrow}}
\DeclareRobustCommand{\llongrightarrow}{\relbar\joinrel\relbar\joinrel\rightarrow}
\DeclareRobustCommand{\llongepi}{\relbar\joinrel\relbar\joinrel\twoheadrightarrow}
\DeclareMathOperator*{\toup}{\longrightarrow}
\DeclareMathOperator*{\ttoup}{\llongrightarrow}
\DeclareMathOperator*{\bijection}{\longleftrightarrow}
\DeclareMathOperator*{\eepi}{\llongepi}
\definecolor{viola}{HTML}{9F00FF}
\begin{document}

\title{Barcode entropy of geodesic flows}

\author[V.\ L.\ Ginzburg]{Viktor L.\ Ginzburg} \address{Viktor
  Ginzburg\newline\indent Department of Mathematics, UC Santa Cruz,
  Santa Cruz, CA 95064, USA} \email{ginzburg@ucsc.edu}

\author[B.\ Z.\ G\"urel]{Ba\c{s}ak Z.\ G\"urel} \address{Ba\c{s}ak
  G\"urel\newline\indent Department of Mathematics, UCF, Orlando, FL
  32816, USA} \email{basak.gurel@ucf.edu}

\author[M. Mazzucchelli]{Marco Mazzucchelli} \address{Marco
  Mazzucchelli\newline\indent CNRS, UMPA, \'Ecole Normale Sup\'erieure
  de Lyon, 69364 Lyon, France} \email{marco.mazzucchelli@ens-lyon.fr}

\date{December 1, 2022}

\begin{abstract}

  We introduce and study the barcode entropy for geodesic flows of
  closed Riemannian manifolds, which measures the exponential growth
  rate of the number of not-too-short bars in the Morse-theoretic
  barcode of the energy functional. We prove that the barcode entropy
  bounds from below the topological entropy of the geodesic flow and,
  conversely, bounds from above the topological entropy of any
  hyperbolic compact invariant set.  As a consequence, for Riemannian
  metrics on surfaces, the barcode entropy is equal to the topological
  entropy. A key to the proofs and of independent interest is a
  crossing energy theorem for gradient flow lines of the energy
  functional.
\end{abstract}

\subjclass[2020]{37D40, 37B40, 53D25}

\keywords{Geodesic flows, barcode entropy, topological entropy}

\thanks{This work is partially supported by NSF CAREER award
  DMS-1454342 (BG), Simons Foundation Collaboration Grants 581382 (VG)
  and 855299 (BG) and the ANR grants CoSyDy, ANR-CE40-0014, and COSY,
  ANR-21-CE40-0002 (MM)}

\maketitle

\vspace{-.8cm} 
\tableofcontents

\section{Introduction}

We introduce and study the \emph{barcode entropy} of geodesic flows of
closed Riemannian manifolds, an entropy-type invariant based on the
Morse theory of the energy functional on the free loop
space. Intuitively, the barcode entropy measures the exponential
growth rate of the ``noise'' in the Morse-theoretic persistence module
associated to the energy functional. Our main results show that
barcode entropy is closely related to topological entropy and that, in
dimension two, the two invariants of the metric are actually
equal. This work can be viewed as an analogue for geodesic flows of
the constructions and results for compactly supported Hamiltonian
diffeomorphisms from \cite{Cineli:2021aa} by \c{C}ineli and the first
two authors. Nevertheless, our arguments are technically very
different, and the two papers are independent of each other. In fact,
barcode entropy is a very general concept which can be adopted in a
variety of frameworks, and the setting of geodesic flows is at least
as natural and perhaps even more natural than that of Hamiltonian
diffeomorphisms.

Let us now give an overview of our results and discuss the broader
context. Consider the energy functional
$\EE\colon\Lambda\to[0,\infty)$ on the free loop space $\Lambda$ of a
closed Riemannian manifold $(M,g)$, whose non-trivial critical points
are the closed geodesics.  The homology of the sublevel sets
$\{\EE<c^2\}\subset\Lambda$ with coefficients in some field $\F$
naturally forms a persistence module, which has an associated
barcode. We denote by $b_{\eps, c}$ the number of bars of size at
least $\eps>0$ and intersecting the interval $[0,c]$. The
$\eps$-barcode entropy $\hhbar_\eps$ is the exponential growth rate of
the function $c\mapsto b_{\eps, c}$, and the barcode entropy
$\hbar=\hbar(g;\F)$ is obtained by taking the limit as $\eps\to 0^+$;
see Definition \ref{def:hhbar}.

Two other notions of entropy associated with the geodesic flow
$\phi_t\colon SM\to SM$ of $(M,g)$ are its topological entropy
$\hhtop=\hhtop(g)$, i.e., the topological entropy of the time-one map
$\phi_1$, and the volume-growth entropy $\hhvol=\hhvol(g)$ defined
here as the exponential growth rate of the function
$t\mapsto \vol(\mathrm{graph}(\phi_t))$. (Note that this is different
from other more standard notions of volume entropy.) The celebrated
Yomdin theorem, \cite{Yomdin:1987aa}, implies that $\hhvol\leq\hhtop$.
Our first main result, Theorem~\ref{mt:bar<top_plus}, asserts that
\[\hhbar\leq\hhvol.\]
In particular, since the topological entropy of any (smooth) geodesic flow is always finite, we infer that the barcode entropy $\hhbar$ is finite as well.

The second main result of the paper, Theorem~\ref{mt:bar>top_I},
guarantees that the barcode entropy is a non-trivial invariant.
Namely, $\hhbar\geq \hhtop(I)$ for any hyperbolic compact invariant
subset $I\subset SM$, where $\hhtop(I)$ denotes the topological
entropy of the restricted geodesic flow $\phi_t|_I$. As a consequence,
when $\dim(M)=2$,
\begin{align*}
\hhbar=\hhvol=\hhtop.
\end{align*}
This is Corollary~\ref{c:surfaces} which follows from
Theorems~\ref{mt:bar<top_plus} and~\ref{mt:bar>top_I} and the results
from \cite{Katok:1980aa, Lian:2012aa, Lima:2019aa} asserting that the
topological entropy of any flow on a 3-dimensional closed manifold can
be approximated by the topological entropy of a suitable hyperbolic
compact invariant subset.  Surprisingly, the identity $\hhvol=\hhtop$
for Riemannian closed surfaces appears to be new, although it is
reminiscent of an identity due to Ma\~n\'e, \cite{Mane:1997aa}.

The central new feature of our approach distinguishing it from many
other results on topological entropy of geodesic flows via closed
geodesics (see, e.g., \cite{Dinaburg:1971aa, Katok:1982aa,
Paternain:1999aa} and references therein) is that
the topology of $M$ plays no role here.  The barcode entropy can be
positive regardless of the growth of $\pi_1(M)$ or $H_*(\Lambda)$,
e.g., even when $M$ is the sphere or a torus. For instance, by Theorem
\ref{mt:bar>top_I}, this is the case when $\phi_t$ has a localized
hyperbolic set with positive entropy. However, exponential growth of
the fundamental group and, in many instances, of the homology implies
exponential growth of the function $c\mapsto b_{\eps,c}$ (for any
$\eps>0$) and thus Theorem \ref{mt:bar<top_plus} generalizes some of
these results. Overall, without Theorems \ref{mt:bar<top_plus} and
\ref{mt:bar>top_I}, it is \emph{a priori} unclear if the Morse complex
of $\EE$ carries enough information, and if it does how to extract it,
to detect positive topological entropy in the absence of homological
or the fundamental group growth.

Likewise, our approach is different from modern generalizations in,
e.g., \cite{Abbondandolo:2021aa, Alves:2016aa, Alves:2019aa,
  Alves-etal:2019aa, Alves-Meiwes:2019aa, Alves:2022aa, Alves:2020aa,
  Macarini:2011aa}, of these classical results, relating topological
entropy of Reeb flows to their Floer theoretic invariants (e.g.,
symplectic or contact homology). Again, the contact topology of the
underlying manifold -- $SM$ in our case -- is central to those results
but essentially immaterial to Theorems~\ref{mt:bar<top_plus}
and~\ref{mt:bar>top_I}. There is however some overlap between methods used, e.g., in \cite{Macarini:2011aa, Meiwes:2018aa} and in the present paper.

One should also keep in mind that in the definition of barcode entropy
one cannot replace the bar count $b_{\eps,c}$ by the number of closed
geodesics with energy less than $c^2$, i.e., the number of critical
circles of the energy functional $\EE$ in the sublevel set
$\{\EE<c^2\}$. Indeed, the number of closed geodesics of energy at
most $c^2$ may grow arbitrarily fast in $c$, possibly
super-exponentially, even for bumpy metrics (see \cite{Burns:1996aa},
and also \cite{Asaoka:2017aa, Kaloshin:1999aa} for results on a
similar phenomenon in the Hamiltonian setting), whereas the barcode
entropy is always finite. On the technical level, the key point is
that $b_{\eps,c}$ is stable under small perturbations of the energy
functional $\EE$, while the count of critical circles is not.

The proof of Theorem \ref{mt:bar<top_plus} employs an analogue for
geodesic flows of Lagrangian tomographs from \cite{Cineli:2021aa,
  Cineli:2022aa}, a tool from integral geometry reminiscent of the
constructions in \cite{Alvarez:1998aa, Gelfand:1994aa,
  Guillemin:2005aa}. We prove Theorem \ref{mt:bar>top_I} by an
argument of different nature, based on a result of independent
interest. This is Theorem \ref{mt:bars_lower_bound} which, in
particular, implies that every closed geodesic in a locally maximal
hyperbolic compact invariant set $I\subset SM$ gives rise to a bar of
length at least $\delta$, for some constant $\delta>0$ depending only
on $I$.  In turn, the proof of Theorem \ref{mt:bars_lower_bound}
ultimately relies on a uniform crossing energy bound,
Proposition~\ref{p:crossing_energy_period_1}, which is a
Morse-theoretic counterpart of the crossing energy bound for
Hamiltonian diffeomorphisms and Floer cylinders from
\cite{Ginzburg:2014aa, Ginzburg:2018aa}; see also \cite{Allais:2020aa}
where the lower bound is proved using generating functions. However,
Proposition \ref{p:crossing_energy_period_1} is not a consequence of
these results; its proof is technically and conceptually quite
different and hinges on finite-dimensional approximations.

It is worth emphasizing that in spite of obvious parallels, Theorems
\ref{mt:bar<top_plus} and \ref{mt:bar>top_I} do not follow from their
counterparts for compactly supported Hamiltonian diffeomorphisms,
\cite[Thms.\ A and B]{Cineli:2021aa}. While there are ways to study
geodesic flows via Floer theory by encapsulating the geodesic flow
$\phi_t$ into an appropriate Hamiltonian diffeomorphism of $T^*M$ (see
\cite{Abbondandolo:2006aa, Salamon:2006aa, Viterbo:1999aa}), this
approach does not seem suitable for establishing the lower bound
on the barcode entropy as in Theorem~\ref{mt:bar>top_I}. The essential
reason is that a hyperbolic compact invariant subset of the geodesic
flow corresponds to an invariant subset of any associated Hamiltonian
diffeomorphism that is only partially hyperbolic.

One may expect the results from this paper to extend to more general
Reeb flows when a suitable analogue of Morse theory (e.g., symplectic
homology or cylindrical contact homology) is available. Then two
problems arise. The first one is defining barcode entropy and
establishing a variant of Theorem \ref{mt:bar<top_plus}; after our paper has appeared, this has been done for Reeb flows on the boundary of certain Liouville domains \cite{Fender:2023aa}. The second problem
is proving an analogue of Theorem~\ref{mt:bar>top_I} for general Reeb
flows. The difficulty here is that it is absolutely unclear at the
moment how to prove a version of the crossing energy bound in such a
general setting.

Finally, while the exponential growth rate is relevant to the
relation between barcode entropy and topological entropy, in other contexts
the noise or chaos in the system might be captured by the function
$c\mapsto b_{c,\eps}$ in different ways. For instance, one may
consider polynomial growth rate as in slow-entropy type invariants;
see, e.g., \cite[Sect.\ 2.3]{Cineli:2022ab} for applications to the
$\gamma$-norm bounds and \cite{Buhovsky:2022aa, Cohen-Steiner:2010aa}
for applications to nodal count.

\section{Main results}
\label{s:main}

\subsection{Morse-theoretic barcodes for closed geodesics}
\label{ss:barcode}

Let $(M,g)$ be a closed Riemannian manifold of dimension at least two (throughout this paper, all Riemannian metrics are tacitly assumed to be smooth, that is, $C^\infty$).
We denote by $SM$ the associated unit tangent bundle, i.e.,
$SM = \big\{ v \in TM\ \big|\ \|v\|_g=1 \big\}$.  The geodesic flow is
defined by $\phi_t\colon SM\to SM$,
$\phi_t(\dot\gamma(0))=\dot\gamma(t)$, where $\gamma\colon \R\to M$ is
a geodesic parametrized with unit speed $\|\dot\gamma\|_g=1$. The
closed geodesics $\gamma$ of length $c$ correspond to the closed
orbits $\dot\gamma$ with minimal period $c$ of the geodesic flow. An
alternative way of characterizing closed geodesics is by means of
their classical variational principle which we now recall.

We consider the free loop space $\Lambda:=\W(S^1,M)$, where $S^1=\R/\Z$,
and the energy functional $\EE\colon\Lambda\to[0,\infty)$ given by
\begin{align}
\label{eq:EE}
 \EE(\gamma)=\int_0^1 \|\dot\gamma(t)\|_g^2\, dt.
\end{align}
The circle $S^1$ acts on $\Lambda$ by time translation as
$t\cdot\gamma=\gamma(t+\cdot)\in\Lambda$, where~$t\in S^1$ and
$\gamma\in\Lambda$, and the energy functional $\EE$ is invariant under
this action. Besides the constant curves $\Lambda^0:=\EE^{-1}(0)$, the
critical point set
\[
\crit^+(\EE):=\crit(\EE)\cap\EE^{-1}(0,\infty)
\] 
consists of all critical circles $S^1\cdot\gamma$, where $\gamma$ is
any 1-periodic closed geodesic. Since closed geodesics are
parametrized proportionally to arc length, any critical value
$\EE(\gamma)$ is the squared length of the closed geodesic $\gamma$,
where $\gamma$ is viewed as a path $\gamma\colon [0,1]\to M$, i.e.
\begin{align*}
\sqrt{\EE(\gamma)}=\int_0^1 \|\dot\gamma\|_g\,dt.
\end{align*}
In this paper we shall 
work with the critical values of the functional $\sqrt{\EE}$. For this reason, for each
subset $\UU\subseteq\Lambda$ and $b>0$, we shall denote
\[\UU^{<b}:=\UU\cap\EE^{-1}[0,b^2).\] 
We emphasize that the variational principle of $\EE$ allows us to
detect all the closed geodesics of $(M,g)$, since any closed geodesic can
be reparametrized to become 1-periodic. Moreover, any closed geodesic
$\gamma$ with minimal period 1 gives rise to infinitely many critical
circles $S^1\cdot\gamma^m\subset\crit^+(\EE)$, where $m\in\N$ and
$\gamma^m:=\gamma(m\,\cdot)$ is the $m$-th iterate of $\gamma$.

We shall employ the language of persistence modules and barcodes, and
refer the reader to \cite{Polterovich:2020aa} for a comprehensive
introduction to the subject.  We consider the persistence module
$(H_b,\,i_{b,a})_{b>a>0}$, where
$H_{b}:=H_*(\Lambda^{<b},\Lambda^0;\F)$ and the maps
$i_{b,a}\colon H_a\to H_b$ are the homomorphisms induced by the
inclusion. The value $b=\infty$ is allowed, in which case we have
$H_\infty=H_*(\Lambda,\Lambda^0;\F)$. Hereafter, all singular homology
groups will be taken with coefficients in an arbitrary field $\F$,
usually suppressed in the notation.

For $c\in(0,\infty]$ and $h\in H_c$, we define the birth
and death values 
\begin{align*}
  \alpha(h)&:=\inf\big\{a < c\ \big|\ h\in
             \textrm{im}(i_{c,a})\big\}\in[0,c),\\
  \beta(h)&:=\inf\big\{b > c\ \big|\
            h\in\ker(i_{b,c})\big\}\in[c,\infty].
\end{align*}
Here, we adopt the usual convention that $\inf\varnothing=\infty$. If
$h\neq0$, we have $\alpha(h)>0$. The \emph{closed geodesics barcode}
$\B=\B(g;\F)$ is the collection of all pairs $([a,b),n)$, where
$[a,b)\subset(0,\infty)$ is an interval (of finite or infinite length)
and $n=n_{[a,b)}$ is a non-negative integer, defined as follows. For
any interval $[a,b)$ and any $c\in(a,b]$, consider the vector spaces
\begin{align*}
V & :=\big\{ h\in H_c\ \big|\ \alpha(h)\leq a,\ \beta(h)\leq b \big\},\\
W & :=\big\{ h\in H_c\ \big|\ \alpha(h)< a,\ \beta(h)\leq b \big\},\\
Z & :=\big\{ h\in H_c\ \big|\ \alpha(h)\leq a,\ \beta(h)< b \big\},
\end{align*}
and define $n_{[a,b)}=\dim V/(W+Z)$. The vector space $V$ is
finite-dimensional, and the quotient dimension $n_{[a,b)}$ is
independent of the choice of the value $c\in(a,b]$; see
Lemma~\ref{l:n_finite}. 

The barcode should be seen as a collection of real intervals
$[a,b)$, called \emph{bars}, with possible repetitions: an element
$([a,b),n)\in\B$ corresponds to $n$ copies of the bar $[a,b)$; if
$n=0$, the bar $[a,b)$ is not contained in the barcode. The size of a
bar $[a,b)$ is its length $b-a\in(0,\infty]$.  For instance, we will
say that $c>0$ is a boundary point of at least $m$ bars with size at
least $\delta$ if there exist distinct elements
$ ([a_1,c),n_1),\ldots ,([a_k,c),n_k),([c,b_1),m_1),\ldots
,([c,b_h),m_h)\in\B $ such that
$n_1+\ldots +n_k+m_1+\ldots +m_h\geq m$, and all bars $[a_i,c)$ and
$[c,b_i)$ have size $\ge \delta$.

\subsection{Barcode entropy}\label{ss:main_results}

For $\epsilon>0$ and $c>0$, we denote by
$\B_{\epsilon,c}\subset\B=\B(g;\F)$ the subcollection of those bars
of size at least $\epsilon$ that intersect the interval $(0,c]$
non-trivially, i.e.,
\[\B_{\epsilon,c}:=\big\{([a,b),n)\in\B\ \big|\ a\leq c,\
b-a\geq\epsilon \big\}.\] 
We set $b_{\epsilon,c}$ to be the number of bars
in $\B_{\epsilon,c}$, namely 
\begin{align*}
  b_{\epsilon,c}:=\!\!\!\sum_{([a,b),n)\in\B_{\epsilon,c}} \!\!\!\!\!\! n.
\end{align*}
Even though the barcode $\B$ can contain infinitely many bars
intersecting the bounded interval $(0,c]$, the value $b_{\epsilon,c}$
is always finite for any $\epsilon>0$; see
Lemma~\ref{l:b_epsilon_c_finite}.

\begin{Definition}
\label{def:hhbar}
The \emph{barcode entropy} $\hhbar=\hhbar(g;\F)$ is the limit
\[
  \hhbar:=\lim_{\epsilon\to 0^+}\hhbar_\eps,
\]
where
\[\hhbar_\eps:=\limsup_{c\to\infty} \frac{\log^+(b_{\epsilon,c})}{c}.\]
Here $\log^+:=\log(\max\{1,\cdot\})$ with the logarithm taken base
$2$.

\end{Definition}

We shall compare the barcode entropy with two other classical notions
of entropy for a closed Riemannian manifold $(M,g)$, and refer the
reader to, e.g., Paternain's monograph~\cite{Paternain:1999aa} for the
background on the subject. The first one is the topological entropy
$\hhtop=\hhtop(g)$ of the geodesic flow $\phi_t:SM\to SM$, which is
the same as the topological entropy of its time-1 map $\phi_1$. Since
$\phi_t$ is smooth, $\hhtop$ is always finite.

The second notion of entropy, which we call \emph{volume-growth
  entropy}\footnote{The volume-growth entropy should not be confused
  with another classical notion of entropy involving volumes: the
  exponential growth rate of the volume of Riemannian balls in the
  universal cover of the Riemannian manifold. The inequality of
  Theorem~\ref{mt:bar<top_plus} is reminiscent of Manning's
  inequality, \cite{Manning:1979aa}, involving this latter notion of
  volume-growth entropy, but is not directly related to it.} and
denote by $\hhvol=\hhvol(g)$, is defined by
\begin{align*}
  \hhvol:=\limsup_{t\to\infty} \frac{\log(V(t))}{t},
\end{align*}
where $V(t)$ is the volume of the graph of $\phi_t:SM\to SM$ measured
with respect to an arbitrary Riemannian metric on $SM\times SM$. The
value of $\hhvol$ is independent of the choice of this Riemannian
metric, but clearly depends on the Riemannian metric $g$ defining the
geodesic flow $\phi_t$. The celebrated Yomdin theorem
\cite{Yomdin:1987aa} implies
\[\hhvol \leq \hhtop.\] 
Indeed, the flow $\psi_t=(\id,\phi_t):SM\times SM\to SM\times SM$ also has topological entropy $\hhtop$, and the graph of $\phi_t$ is precisely $\psi_t(\Delta)$, where $\Delta=\{(v,v)\ |\ v\in SM\}$ is the diagonal submanifold.

Our first main result is the following. Together with Yomdin theorem
and the finiteness of topological entropy, it implies in particular
that the barcode entropy is always finite.

\begin{maintheorem}
\label{mt:bar<top_plus}
On any closed Riemannian manifold and for any coefficient field, we
have $\hhbar\leq\hhvol$.
\end{maintheorem}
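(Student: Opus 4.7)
The plan is to adapt the Lagrangian tomograph strategy from the Hamiltonian setting of \c{C}ineli-Ginzburg-G\"urel to the variational framework of geodesic flows, through a finite-dimensional approximation of the free loop space by manifolds of broken geodesics. Fix $c>0$ and let $r$ be smaller than the injectivity radius of $(M,g)$. For an integer $N\gtrsim c/r$, the sublevel set $\Lambda^{<c}$ deformation retracts onto a finite-dimensional submanifold $\Lambda_N\subset M^N$ consisting of loops whose consecutive vertices are joined by minimizing geodesic segments of length less than $r$; the inclusion is a homotopy equivalence, so the persistence module computing $b_{\eps,c}$ can be computed from $\EE|_{\Lambda_N}$, and the critical points of $\EE|_{\Lambda_N}$ are precisely the closed geodesics of length at most $c$.

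A persistence stability argument then bounds $b_{\eps,c}$ above by the number of critical points, in the slightly enlarged sublevel $\{\EE<(c+\eps)^2\}\cap\Lambda_N$, of any sufficiently $C^\infty$-small perturbation of $\EE|_{\Lambda_N}$ (depending on $\eps$). The task reduces to bounding this critical point count by an integral-geometric quantity governed by $\vol(\mathrm{graph}(\phi_t))$. For this I would introduce a tomograph: a compact parametrized family of codimension-one submanifolds $\Sigma_s\subset M$ (for instance, small geodesic spheres centered at varying points, or level sets of a generic Morse function), together with the slicing $\Lambda_N^s\subset\Lambda_N$ of broken loops whose first vertex lies on $\Sigma_s$. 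Every non-constant closed geodesic meets a generic $\Sigma_s$ transversely at $\asymp c$ moments, so averaging a slice-wise intersection count of the perturbed critical set with $\Lambda_N^s$ against $s$ yields a Crofton-type estimate
\begin{align*}
    \int_B \#\big(\crit\cap\Lambda_N^s\big)\,ds \;\leq\; \sum_{0<t\leq c}\vol\big(\mathrm{graph}(\phi_t)\cap (\Sigma\times M)\big) \;\leq\; \bar C(c)\cdot e^{\hhvol\cdot c},
\end{align*}
where $\bar C(c)$ is a sub-exponential prefactor absorbing polynomial factors in $N$ and in the number of crossings. Choosing $s$ below the average then yields $b_{\eps,c}\leq \bar C(c)\,e^{\hhvol\cdot c}$, which on taking $\log^+$, dividing by $c$, passing to the limsup in $c$, and then letting $\eps\to 0^+$ gives $\hhbar\leq\hhvol$.

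The principal obstacle will be the interaction between the persistence-stability step and the tomograph averaging. One must produce a perturbation whose critical set is detected by the tomograph in a quantitative way: each $\eps$-persistent bar of $\EE|_{\Lambda_N}$ must give rise to a critical point of the perturbation whose first vertex hits a set of $s$ of definite measure in $B$, so that the mean of the slice counts genuinely bounds $b_{\eps,c}$ from above. A related technical difficulty is that the ambient dimension of $\Lambda_N$ grows linearly in $c$, which threatens to produce a factor $e^{O(N)}=e^{O(c)}$ in the Crofton estimate and thereby spoil the asymptotic; controlling this requires moving only a bounded number of vertices in the tomograph while handling the remaining vertices by standard finite-dimensional Morse/intersection theory on $M^{N-1}$, and it is the heart of the analogy with Lagrangian tomographs. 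Finally, one must verify that the averaged Crofton bound captures $\hhvol$ itself rather than a cruder length- or diameter-growth exponent, which is where the assumption that the tomograph sits inside $M$ (and not in $SM$) is used, together with a volume comparison between $\mathrm{graph}(\phi_t)\subset SM\times SM$ and its projection to $M\times M$.
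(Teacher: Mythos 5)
Your high-level skeleton (persistence stability followed by a Crofton-type average) matches the paper's, but the tomograph you propose — a family of codimension-one hypersurfaces $\Sigma_s\subset M$ together with the slicing of broken loops whose first vertex lies on $\Sigma_s$ — does not do the work that is actually needed, and the obstacles you flag at the end are not technical annoyances but a genuine missing mechanism. A tomograph must do two things here: (a) perturb the variational problem so that, for almost every parameter, the critical set is nondegenerate; and (b) encode those critical points as transverse intersections of two submanifolds of a space of \emph{fixed} dimension, so that a coarea argument bounds the averaged count by an integral of $\vol(\Phi_t(SM))$. Restricting to loops whose first vertex lies on a hypersurface accomplishes neither. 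It does not perturb $\EE$ at all, and the critical points of $\EE$ \emph{constrained} to the slice are not closed geodesics (the Lagrange multiplier creates a corner at the first vertex), so the slice-wise counts are unrelated to the barcode of $\EE$. Moreover, the Crofton computation would then live in $M^N$, whose dimension grows linearly in $c$; the $e^{O(N)}$ factor you worry about is a real obstruction, and nothing in the slicing framework suggests how to escape it.

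The paper's tomograph is of a different nature: a smooth family $\psi\colon Z\times M\to M$ of diffeomorphisms of $M$ with $\psi_0=\id$, used to twist the boundary condition into $\gamma(1)=\psi_z(\gamma(0))$, giving a perturbed functional $\EE_z$ on the twisted path space $\Omega_z$. The crucial identity is the bijection between $\crit(\EE_z)\cap\EE_z^{-1}(c^2)$ and $\Psi_z(SM)\cap\Phi_c(SM)$ inside $SM\times SM$, where $\Psi_z(v)=(d\psi_z(x)^*v,v)$. This moves the Crofton integral (Lemma~\ref{l:Crofton_inequality}) into a space of fixed dimension with a constant prefactor independent of $c$, directly yielding $\int_{c_0}^{c_1}\vol(\Phi_t(SM))\,dt$ on the right. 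Nondegeneracy of $\crit(\EE_z)$ for a.e.\ $z$ then comes for free from parametric transversality via condition~\ref{itm:T3}, and the persistence comparison is an explicit $\epsilon$-interleaving between the persistence modules of $\EE$ on $\Lambda$ and of $\EE_z$ on $\Omega_z$ (Lemmas~\ref{l:sigma_nu_maps} and~\ref{l:stability_barcode}). Finite-dimensional broken-geodesic approximation plays no role in the proof of Theorem~\ref{mt:bar<top_plus}; it is used in the paper only for the crossing energy bound behind Theorem~\ref{mt:bars_lower_bound}. Without a device that simultaneously perturbs the functional and reduces the critical point count to a transverse intersection problem in $SM\times SM$, your argument does not close.
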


The proof of this theorem is based on an inequality reminiscent of the classical Crofton formula applied to certain Lagrangian tomographs. For general Reeb flows of closed contact-type hypersurfaces of symplectic manifolds, a different argument due to Meiwes \cite[Prop.~10.9]{Meiwes:2018aa} and still involving Lagrangian tomographs allows to bound $\hhvol$ from below by the exponential growth-rate of certain leafwise intersections.

Knowing that the barcode entropy is always finite, it remains to
establish whether it is a non-trivial invariant, that is, whether it
does not always vanish.  Our second main result implies that the
barcode entropy is positive when the geodesic flow admits a suspended
horseshoe. For each compact subset $I\subset SM$ invariant under the
geodesic flow (i.e., $\phi_t(I)=I$ for all $t\in\R$), we denote by
$\hhtop(I)=\hhtop(I;M,g)$ the topological entropy of the restricted
geodesic flow $\phi_t|_I$. The precise statement is the following.

\begin{maintheorem}\label{mt:bar>top_I}
  On any closed Riemannian manifold and for any coefficient field, if
  $I$ is a hyperbolic compact invariant subset of the geodesic flow,
  then $\hhbar\geq\hhtop(I)$.
\end{maintheorem}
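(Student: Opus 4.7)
The strategy is to translate each closed geodesic whose trajectory sits inside a hyperbolic invariant set into a bar of definite size in the Morse barcode $\B(g;\F)$, and then to invoke the well-known exponential growth of periodic orbits in such sets. First I would reduce to the locally maximal case: by the shadowing lemma for smooth flows, any hyperbolic compact invariant set $I\subseteq SM$ is contained in a locally maximal hyperbolic compact invariant set $I^\ast\subseteq SM$, namely the maximal invariant subset of a sufficiently small neighbourhood of $I$; since $I\subseteq I^\ast$, we have $\hhtop(I)\le\hhtop(I^\ast)$, so it suffices to prove $\hhbar\ge\hhtop(I^\ast)$.

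Next I would apply Theorem~\ref{mt:bars_lower_bound} to $I^\ast$: it yields a constant $\delta=\delta(I^\ast)>0$ such that every closed geodesic whose orbit lies in $I^\ast$ contributes a distinct bar of length at least $\delta$ to $\B$, born near the length of the geodesic. Consequently
\[
b_{\delta,c}\;\ge\;\#\bigl\{\text{closed geodesics contained in }I^\ast\text{ of length}\le c\bigr\}
\]
for every $c>0$. By Bowen's classical theorem for a locally maximal hyperbolic invariant set of a smooth flow, the quantity on the right grows exponentially in $c$ at the rate $\hhtop(I^\ast)$. Combining these,
\[
\hhbar_\delta=\limsup_{c\to\infty}\frac{\log^+(b_{\delta,c})}{c}\;\ge\;\hhtop(I^\ast)\;\ge\;\hhtop(I).
\]
Since $\hhbar_\eps$ is nonincreasing in $\eps$ and $\hhbar=\lim_{\eps\to 0^+}\hhbar_\eps\ge\hhbar_\delta$ (Definition~\ref{def:hhbar}), the inequality $\hhbar\ge\hhtop(I)$ follows.

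The main obstacle lies in the uniformity asserted in the second step: the constant $\delta>0$ must depend only on $I^\ast$ and must work for every closed geodesic sitting in $I^\ast$, including arbitrarily high iterates of short closed geodesics that happen to lie in $I^\ast$. This is exactly what Theorem~\ref{mt:bars_lower_bound} delivers, and it is ultimately rooted in the uniform crossing-energy estimate of Proposition~\ref{p:crossing_energy_period_1}: any gradient flow line of $\EE$ connecting the neighbourhood of one hyperbolic closed geodesic in $I^\ast$ to that of another must spend a definite amount of energy, which prevents the persistent homology classes localised near one closed geodesic from cancelling against those of a different closed geodesic too soon. A secondary subtlety, which would be absorbed into the statement of Theorem~\ref{mt:bars_lower_bound}, is the $S^1$-symmetry of $\EE$: a critical circle $S^1\cdot\gamma$, rather than an individual critical point, must contribute a single bar of size at least $\delta$, and distinct critical circles in $I^\ast$ must give rise to distinct bars.
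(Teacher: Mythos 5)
Your overall strategy---translate geodesics tangent to a locally maximal hyperbolic set into bars of definite length via Theorem~\ref{mt:bars_lower_bound}, then count periodic orbits---is the right second half of the argument and matches the paper. However, the reduction in your first step is wrong and contains a genuine gap.

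You claim that any hyperbolic compact invariant set $I\subset SM$ is contained in a locally maximal hyperbolic compact invariant set $I^*$, namely the maximal invariant subset of a small neighborhood $U$ of $I$. It is indeed a standard fact (robustness of hyperbolicity) that for $U$ small enough the maximal invariant set $\bigcap_t \phi_t(\overline U)$ is still hyperbolic and contains $I$. But this set need not be \emph{locally maximal}: it may meet $\partial U$, in which case $U$ is not an isolating neighborhood for it, and any attempt to choose a different open isolating neighborhood either loses part of the set or enlarges it. In fact, it is a theorem of Fisher (building on a counterexample of Crovisier) that there exist hyperbolic compact invariant sets that are \emph{not} contained in any locally maximal hyperbolic set. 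The shadowing lemma, which you invoke, does not rescue this: shadowing governs approximation of pseudo-orbits, not the Conley-isolation property. So the reduction ``replace $I$ by $I^*\supseteq I$ locally maximal'' is not available, and the inequality $\hhtop(I)\leq \hhtop(I^*)$ you want has nowhere to live.

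The paper instead takes the measure-theoretic route that sidesteps this obstruction. Starting from the variational principle for entropy it produces, for each small $\rho>0$, an invariant probability measure $\mu$ supported in $I$ with $h_\mu\geq\hhtop(I)-\rho$; passing to an ergodic component gives a hyperbolic ergodic measure $\nu$ with only one zero Lyapunov exponent (the flow direction) and $h_\nu\geq h_\mu-\rho$. The Katok horseshoe theorem, in the version for flows due to Lian and Young, then yields a locally maximal hyperbolic compact invariant set $I_\rho\subset SM$ (a priori unrelated to $I$ as a subset, and possibly much smaller) with $\hhtop(I_\rho)\geq h_\nu-\rho\geq\hhtop(I)-2\rho$. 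One then applies Corollary~\ref{c:non_degenerate} to $I_\rho$---which is locally maximal, expansive (since hyperbolic) and nondegenerate---to get a uniform $\delta>0$ with $b_{\delta,c}\geq\tfrac12\,p(c)$, where $p(c)$ counts prime closed orbits of period at most $c$ in $I_\rho$, and uses the exponential growth $\hhtop(I_\rho)=\limsup_c \tfrac{1}{c}\log^+ p(c)$ valid for locally maximal hyperbolic sets. Letting $\rho\to0$ gives $\hhbar\geq\hhtop(I)$. The rest of your argument (the role of Proposition~\ref{p:crossing_energy_period_1}, the $S^1$-symmetry producing critical circles rather than critical points) is sound, but without the Katok/Lian--Young step you have no locally maximal set to apply Theorem~\ref{mt:bars_lower_bound} to when $I$ itself fails to be locally maximal.
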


For flows on 3-dimensional closed manifolds, the topological entropy
can be approximated by the topological entropy of suitable hyperbolic
compact invariant subsets; see \cite{Katok:1980aa, Lian:2012aa,
  Lima:2019aa}.  Combining this fact with Theorems
\ref{mt:bar<top_plus} and \ref{mt:bar>top_I}, we obtain the following
corollary for Riemannian closed surfaces.

\begin{maincor}\label{c:surfaces}
  On any closed Riemannian surface and for any coefficient field, we
  have $\hhbar=\hhvol=\hhtop$.
\end{maincor}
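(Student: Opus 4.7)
The plan is simply to assemble a two-sided squeeze from the machinery already in place: Theorems~\ref{mt:bar<top_plus} and~\ref{mt:bar>top_I}, Yomdin's theorem, and the cited horseshoe-approximation results for flows on closed 3-manifolds. First I would record the easy half. Theorem~\ref{mt:bar<top_plus} gives $\hhbar\leq\hhvol$, and Yomdin's theorem \cite{Yomdin:1987aa} gives $\hhvol\leq\hhtop$, so
\[\hhbar\leq\hhvol\leq\hhtop.\]
It therefore suffices to establish the reverse inequality $\hhtop\leq\hhbar$.

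For the reverse direction I would invoke the dimensional hypothesis. Since $M$ is a closed surface, the unit tangent bundle $SM$ is a closed $3$-manifold and the geodesic flow $\phi_t$ is a smooth flow on it. The combined results of Katok~\cite{Katok:1980aa}, Lian--Young~\cite{Lian:2012aa}, and Lima--Sarig~\cite{Lima:2019aa} assert that for a smooth flow on a closed $3$-manifold the topological entropy can be approximated from below by topological entropies of hyperbolic compact invariant subsets: for every $\eta>0$ there exists a hyperbolic compact $\phi_t$-invariant set $I_\eta\subset SM$ with $\hhtop(I_\eta)\geq\hhtop-\eta$. Applying Theorem~\ref{mt:bar>top_I} to each such $I_\eta$ yields
\[\hhbar\geq \hhtop(I_\eta)\geq \hhtop-\eta,\]
and letting $\eta\to 0^+$ gives $\hhbar\geq\hhtop$. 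Combined with the first chain of inequalities this forces $\hhbar=\hhvol=\hhtop$, which is what we want.

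The proof is thus essentially an assembly argument, with all the real analytic work done in Theorems~\ref{mt:bar<top_plus} and~\ref{mt:bar>top_I}. The only point that deserves scrutiny is checking that the cited $3$-dimensional approximation theorems apply verbatim to our smooth geodesic flow on the compact manifold $SM$, and distinguishing the trivial case: if $\hhtop=0$ the conclusion is immediate from the first chain, while if $\hhtop>0$ one appeals to Lima--Sarig's theorem, whose hypotheses (a $C^{1+\alpha}$ flow on a closed $3$-manifold with positive topological entropy) are clearly met here. I do not anticipate any genuine obstacle beyond this verification.
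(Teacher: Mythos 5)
Your proof is correct and follows essentially the same strategy as the paper: combine Theorem~\ref{mt:bar<top_plus} and Yomdin for $\hhbar\leq\hhvol\leq\hhtop$, then approximate $\hhtop$ from below by $\hhtop(I_\eta)$ for hyperbolic compact invariant sets $I_\eta$ and apply Theorem~\ref{mt:bar>top_I}. The only difference is presentational: you cite the horseshoe-approximation fact for $3$-dimensional flows as a black box from \cite{Katok:1980aa, Lian:2012aa, Lima:2019aa}, whereas the paper spells out how to extract it in this setting (variational principle for entropy, ergodic decomposition, Ruelle's inequality to guarantee exactly one zero Lyapunov exponent, then Lian--Young \cite[Thm.~D$'$]{Lian:2012aa}); this extra detail is needed because Lian--Young's hypothesis on Lyapunov exponents must be verified, but the underlying argument is the one you describe.
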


\begin{rem}
  A priori, the barcode entropy depends on the choice of the
  coefficient field $\F$ employed in the homology group of the
  persistence module, although we do not know examples where this
  actually happens. Corollary~\ref{c:surfaces} implies in particular
  that, at least for Riemannian surfaces, the barcode entropy is
  independent of the coefficient field.
\end{rem}

\begin{rem}
  Surprisingly, the identity $\hhvol=\hhtop$ on all closed Riemannian
  surfaces provided by Corollary~\ref{c:surfaces} is new. We do not
  know whether the identity holds for higher dimensional closed
  Riemannian manifolds as well. Nevertheless, the following similar
  identity due to Ma\~n\'e, \cite{Mane:1997aa}, does hold in every
  dimension:
\begin{align*}
  \hhtop=\lim_{t\to\infty}
  \frac 1t
  \log\bigg( \underbrace{\int_M \vol(\phi_t(S_xM))\, dx}_{(*)} \bigg).
\end{align*}
Here $dx$ is the Riemannian volume form, and the volume of
$\phi_t(S_xM)$ is measured with respect to the Riemannian metric on
$SM$ induced by the one on $M$. Despite the similarities, the quantity
$(*)$ seems different from the volume $V(t)$ entering in the
definition of $\hhvol$, and Ma\~n\'e's identity does not seem to be
equivalent to ours. Both our identity and Ma\~n\'e's one rely on
Yomdin's inequality to bound $\hhtop$ from below, but Ma\~n\'e's
identity further relies on the Przytycki inequality,
\cite{Przytycki:1980aa}, to bound $\hhtop$ from above, whereas we
employ a completely different argument based on a crossing energy
bound, as we shall explain in Section~\ref{ss:invariant_subsets}.
\end{rem}

\begin{rem}
  There are other alternatives to the definition of barcode entropy
  adopted here. First, we could have instead worked with a variant of
  \emph{sequential entropy} for geodesic flows defined similarly to
  sequential barcode entropy for compactly supported Hamiltonian
  diffeomorphisms; see
  \cite{Cineli:2022aa}. Theorems~\ref{mt:bar<top_plus}
  and~\ref{mt:bar>top_I} would hold for sequential barcode entropy,
  and while we do not know if in general the two types of barcode
  entropy are equal, this would be the case when $M$ is a
  surface. Secondly, we could have used the $S^1$-equivariant Morse
  theory rather than the ordinary Morse theory. By the Gysin sequence
  and \cite[Thm.\ 3.1]{Buhovsky:2022aa}, the equivariant barcode
  entropy is greater than or equal to the barcode entropy, and hence
  Theorem \ref{mt:bar>top_I} would still hold for it. However, we do
  not know whether Theorem \ref{mt:bar<top_plus} remain true in the
  equivariant setting. Finally, the definition of barcode entropy and
  Theorem~\ref{mt:bar<top_plus} have also a relative analogue along the
  lines of relative barcode entropy from \cite{Cineli:2021aa}; we will
  touch upon it in Section~\ref{sec:relative}.
\end{rem}

\subsection{Invariant subsets and a lower bound on the bar size}
\label{ss:invariant_subsets}

The main ingredient of the proof of Theorem~\ref{mt:bar>top_I} is a
uniform lower bound on the size of the bars associated with a locally
maximal, hyperbolic, compact invariant subset of the geodesic
flow. This lower bound actually requires a slightly weaker assumption
on the invariant subset than hyperbolicity: expansivity.
Before stating the result, Theorem~\ref{mt:bars_lower_bound}, let us
introduce some notation and terminology.

Let $I\subset SM$ be a compact invariant subset for the geodesic flow
$\phi_t$, i.e., $\phi_t(I)=I$ for all $t\in\R$. We denote by
$\PP(I)\subset\crit^+(\EE)$ the space of 1-periodic closed geodesics
tangent to $I$, i.e.
\begin{align}
\label{e:P(I)}
  \PP(I):=\big\{ \gamma\in\crit^+(\EE)\ \big|\
  \tfrac{\dot\gamma(t)}{\|\dot\gamma(t)\|_g}\in I, \
  \forall t\in S^1 \big\}.
\end{align}
For each $c>0$, set $\PP^c(I):=\PP(I)\cap\EE^{-1}(c^2)$.  The
\emph{length spectrum} of $I$ is defined as
$$\sigma(I):=\{ c>0\ |\ \PP^c(I)\neq\varnothing \}.$$
In other words, $\sigma(I)$ consists of the energy of all (possibly
iterated) 1-periodic geodesics tangent to $I$. We denote the total
local homology of the set formed by closed geodesics tangent to $I$
with energy $c^2$ by
\begin{align*}
 C_*(\PP^c(I))
 :=
 H_*(\Lambda^{<c}\cup\PP^c(I),\Lambda^{<c}).
\end{align*}

We need to impose two conditions on compact invariant sets
$I\subset SM$. The first one is that $I$ is \emph{locally maximal}:
the set $I$ admits an open neighborhood $U\subset SM$, called an
\emph{isolating neighborhood}, such that
\begin{align*}
 I=\bigcap_{t\in\R} \phi_t(U).
\end{align*}
In other words, $I$ is the largest invariant subset contained in
$U$. The second condition is that $I$ is \emph{expansive}: for every
$\epsilon>0$, there exists $\delta>0$ such that for
any $z_1,z_2\in I$ and 
continuous function $s \colon \R\to\R$ 
satisfying
\[
  s(0)=0, \qquad \sup_{t\in\R}\tilde
  d\left(\phi_t(z_1),\phi_{s(t)}(z_2)\right)<\delta,
\]
we have $z_2=\phi_t(z_1)$ for some $t\in[-\epsilon,\epsilon]$. Here
$\tilde d\colon SM\times SM\to[0,\infty)$ is the distance on the unit
tangent bundle induced by the Riemannian metric $g$. This condition
was first introduced by Bowen and Walters in \cite{Bowen:1972us}.

We are now in a position to state our third main result.

\begin{maintheorem}
\label{mt:bars_lower_bound}
Let $(M,g)$ be a closed Riemannian manifold, let $I\subset SM$ be a
locally maximal, expansive, compact invariant subset of its geodesic
flow, and let $\F$ be any coefficient field. There exists $\delta>0$
such that every $c\in\sigma(I)$ is a boundary point of at least
$\dim C_*(\PP^c(I))$ bars of size at least $\delta$ in the closed
geodesics barcode $\B(g;\F)$.
\end{maintheorem}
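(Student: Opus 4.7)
The plan is to convert the local homology $C_*(\PP^c(I))$ at each level $c \in \sigma(I)$ into bars of the closed-geodesics persistence module with one endpoint at $c$, using the uniform crossing energy bound (Proposition~\ref{p:crossing_energy_period_1}) as the source of the common length bound $\delta$. First I would fix an isolating neighborhood $U \supset I$ with $I = \bigcap_{t \in \R} \phi_t(U)$, and consider the open subset $\WW \subset \Lambda$ of loops whose normalized velocities stay inside $U$ for all times. By local maximality, any critical circle of $\EE$ entirely contained in $\WW$ belongs to $\PP(I)$. Proposition~\ref{p:crossing_energy_period_1}, whose hypotheses are met thanks to expansivity and local maximality, then yields a constant $\eta = \eta(I,U) > 0$ such that any negative gradient trajectory of $\EE$ that starts in a specified small $W^{1,2}$-neighborhood of $\PP(I)$ and eventually leaves $\WW$ must accumulate $\EE$-energy at least $\eta$; passing from $\EE$ to $\sqrt{\EE}$ this translates into the uniform length gap $\delta > 0$ in the conclusion.

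The main computation is a local persistence-module analysis near a fixed critical level $c$. I would approximate $\EE$ by a finite-dimensional broken-geodesic functional $\EE_N$ on a submanifold $\Lambda_N \subset \Lambda$, with $N$ chosen large enough that the critical points of $\EE_N$ below a fixed large threshold coincide with those of $\EE$, and then apply a small $S^1$-invariant perturbation supported outside $\WW$ to make the perturbed $\EE_N$ Morse--Bott everywhere without touching $\PP^c(I)$. The local Morse--Bott contribution of $\PP^c(I)$ to the sublevel-set filtration near $c$ has total $\F$-dimension $\dim C_*(\PP^c(I))$, and by the structure theorem for persistence modules this contribution is accounted for by exactly that many bars having one endpoint equal to $c$. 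The crossing energy bound, applied to gradient trajectories that would connect classes supported near $\PP^c(I)$ to critical points lying outside $\WW$, then forces the opposite endpoint of each such bar to lie outside the interval $(c-\delta,\, c+\delta)$, giving the desired bars of length $\ge \delta$.

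The main obstacle is transferring the crossing energy bound from $\EE$ on the infinite-dimensional $\Lambda$ to the perturbed finite-dimensional $\EE_N$: one must verify, uniformly in the approximation scale $N$ and in the size of the perturbation, that $\EE_N$-gradient trajectories joining a prescribed small neighborhood of $\PP^c(I)$ to the complement of $\WW$ still pay approximately $\eta$ in energy. This rests on the $C^0$-closeness of $\EE_N$-gradient lines to $\EE$-gradient lines on compact subsets, combined with the fact that ``escaping $\WW$'' is an open condition preserved under small $C^2$-perturbations. A secondary technical point is handling the $S^1$-equivariance and the possible degeneracy of iterated closed geodesics contained in $\PP^c(I)$: critical circles must be treated as Morse--Bott submanifolds, and a Conley-index-type stability argument is needed to guarantee that $C_*(\PP^c(I))$ is preserved by the finite-dimensional reduction and the subsequent perturbation. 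Once these ingredients are in hand, the bar-counting reduces to a standard exercise in persistence-module linear algebra, and a final observation that $\delta$ can be chosen independent of $c$ (using the homogeneity of the construction, which depends only on the pair $(I,U)$ and not on $c$) completes the proof.
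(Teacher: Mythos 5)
Your high-level strategy --- crossing energy bound supplies a uniform $\delta$, the local homology $C_*(\PP^c(I))$ accounts for bars with an endpoint at $c$, and the bound forces the other endpoint to be $\geq \delta$ away --- is the same as the paper's. But there are two serious gaps and one misreading worth flagging.

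\textbf{The Morse--Bott perturbation is both unnecessary and incompatible with the hypotheses.} You propose to perturb $\EE_N$ outside $\WW$ so that the perturbed functional becomes ``Morse--Bott everywhere,'' while leaving $\PP^c(I)$ untouched. But $\PP^c(I)$ sits \emph{inside} $\WW$, and the theorem makes no non-degeneracy assumption: expansivity and local maximality do not prevent the closed geodesics tangent to $I$ from being degenerate, so you cannot achieve ``Morse--Bott everywhere'' with a perturbation supported outside $\WW$. You then try to patch this by invoking Conley-index stability, which only underlines that the perturbation was not doing useful work. The paper avoids the issue entirely by working with the \emph{unperturbed} energy functional and Gromoll--Meyer neighborhoods, which give the correct local homology for arbitrary isolated (possibly degenerate) critical circles. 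Moreover, a perturbation, even a small one, changes the persistence module; to extract information about the \emph{original} barcode, you would then need an interleaving estimate that is finer than $\delta$, adding a layer of bookkeeping the paper never has to face.

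\textbf{The crucial persistence step is asserted but not carried out.} You write that the crossing energy bound ``forces the opposite endpoint of each such bar to lie outside $(c-\delta, c+\delta)$,'' but how does a statement about anti-gradient flow lines yield a statement about bar endpoints? That is precisely the content of the paper's Lemma~\ref{l:persistence}: the inclusion $C_*(\PP^c(I)) \hookrightarrow H_*(\Lambda^{<c+\delta},\Lambda^{<c})$ is injective, and $C_*(\PP^c(I))$ lifts through $H_*(\Lambda^{\le c},\Lambda^{<c-\delta})$. Proving this requires an explicit deformation retraction of $\Lambda^{<c+\delta}$ onto $\Lambda^{<c}\cup\WW\cup\XX$, built in three stages (constant-speed reparametrization, projection to the broken-geodesic space $L_k$, and the flow of a cutoff anti-gradient vector field), and its correctness rests on a careful lower bound for the energy drop along cutoff anti-gradient trajectories that cross the relevant shells. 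Without constructing such a retraction (or an equivalent substitute), ``bars have one endpoint at $c$ and length $\geq\delta$'' does not follow from the crossing energy bound. As a secondary remark, you describe the bound as applying to negative gradient trajectories of $\EE$ on $\Lambda$ and then worry about ``transferring'' it to the finite-dimensional reduction; in fact Proposition~\ref{p:crossing_energy_period_1} is stated (and can only be proved) for $E_k$-flow lines on $L_k$ --- the paper explicitly remarks that the bound is not derived in the infinite-dimensional setting --- so the transfer concern is running in the wrong direction and masks the real difficulty, which is precisely the deformation-retraction argument you omit.
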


Next, let $\gamma\in\crit^+(\EE)$ be a closed geodesic of energy
$\EE(\gamma)=c^2$. The corresponding $c$-periodic orbit of the
geodesic flow is given by $t\mapsto\phi_t(v)$, where
$v=\dot\gamma(0)/\|\dot\gamma(0)\|_g$. The closed geodesic $\gamma$ is
called \emph{non-degenerate} when $\ker(d\phi_{c}(v)-\id)$ is 1-dimensional; equivalently, the critical circle
$S^1\cdot\gamma\subset\crit^+(\EE)$ is non-degenerate in the sense of
Morse--Bott theory. An invariant subset $I\subset SM$ for the geodesic
flow $\phi_t$ is called non-degenerate when every closed geodesic
tangent to $I$ is non-degenerate. Note that this definition requires
all iterates of closed geodesics tangent to $I$ to be non-degenerate.

Furthermore, we say that a critical circle
$S^1\cdot\gamma\subset\crit^+(\EE)$ is \emph{prime} when $\gamma$ is
not iterated, i.e., when $\gamma \neq \zeta^m$ for any integer
$m\geq2$ and $\zeta\in\crit^+(\EE)$.  For each $c\in\sigma(I)$, we
denote by $n_c(I)$ the number of non-degenerate, prime, critical
circles in $\PP^c(I)$. Theorem~\ref{mt:bars_lower_bound} has the
following corollary.

\begin{maincor}\label{c:non_degenerate}
  Let $(M,g)$ be a closed Riemannian manifold, let $I\subset SM$ be a
  locally maximal, expansive, compact invariant subset of the geodesic
  flow, and let $\F$ be any coefficient field. There exists $\delta>0$
  such that every $c\in\sigma(I)$ is a boundary point of at least
  $2n_c(I)$ bars of size at least $\delta$ in the closed geodesics
  barcode $\B(g;\F)$.
\end{maincor}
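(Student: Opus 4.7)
The plan is to deduce the corollary from Theorem~\ref{mt:bars_lower_bound}, which already delivers $\dim C_*(\PP^c(I))$ bars of size at least $\delta$ with boundary at $c$; it therefore suffices to show
\begin{equation*}
 \dim C_*(\PP^c(I)) \;\geq\; 2 n_c(I).
\end{equation*}
The critical circles in $\PP^c(I)$ are pairwise disjoint closed subsets of the level set $\{\EE=c^2\}$, so small pairwise disjoint $S^1$-invariant open neighborhoods together with an excision argument yield the direct sum decomposition
\begin{equation*}
 C_*(\PP^c(I)) \;\cong\; \bigoplus_{S^1\cdot\gamma\in\PP^c(I)} H_*\bigl(\Lambda^{<c}\cup S^1\cdot\gamma,\,\Lambda^{<c}\bigr).
\end{equation*}
Since every summand has non-negative dimension, it is enough to prove that each non-degenerate prime critical circle $S^1\cdot\gamma$ contributes at least $2$ to the total dimension.

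For such a $\gamma$, I would use the Morse--Bott lemma to identify a small neighborhood of $S^1\cdot\gamma$ in $\Lambda$ with the total space of the normal bundle, and the sublevel pair with the Thom pair of the negative normal bundle $E^-_\gamma\to S^1\cdot\gamma$. The Thom isomorphism then reduces the computation to $H_{*-\mathrm{ind}(\gamma)}(S^1;\F_\gamma)$, where $\F_\gamma$ is the $\F$-local system associated to the orientation line of $E^-_\gamma$. The crucial observation is that $E^-_\gamma$ is trivial whenever $\gamma$ is prime: primality means $\gamma$ has minimal period $1$, so the $S^1$-action on $S^1\cdot\gamma\cong S^1$ is free; on the other hand $\EE$ is $S^1$-invariant, hence so are its Hessian and the eigenbundle $E^-_\gamma$; transport along the free $S^1$-action then furnishes a canonical trivialization $E^-_\gamma\cong S^1\times E^-_\gamma|_\gamma$. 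Therefore $\F_\gamma$ is the constant local system and the local homology has total dimension $\dim H_*(S^1;\F)=2$ in every characteristic.

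Summing over the $n_c(I)$ prime non-degenerate critical circles yields $\dim C_*(\PP^c(I))\geq 2n_c(I)$, and an application of Theorem~\ref{mt:bars_lower_bound} completes the argument. The main subtlety lies in the trivialization of $E^-_\gamma$: for an iterated critical circle $S^1\cdot\zeta^m$ the $S^1$-stabilizer is $\Z/m$, the action on the orbit is no longer free, and the negative bundle can be non-orientable; in that case the local contribution vanishes for fields of characteristic different from~$2$. This is precisely why both the ``prime'' and ``non-degenerate'' hypotheses are required in order to extract the factor $2$ simultaneously for every coefficient field.
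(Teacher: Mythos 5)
Your proposal is correct and follows essentially the same route as the paper: reduce by excision to the local homology of a single non-degenerate prime critical circle, then show that this local homology has total dimension $2$. The only stylistic difference is that the paper computes $C_*(S^1\cdot\gamma)$ by building an explicit finite-dimensional negative slice $\NN_\gamma$ and identifying a neighborhood of the orbit with $S^1\times\NN_\gamma$ (primality ensuring injectivity of the map $(t,\zeta)\mapsto t\cdot\zeta$), whereas you invoke the Morse--Bott/Thom isomorphism and observe that primality makes the $S^1$-holonomy of the negative bundle trivial — the same fact in different clothing, and your remark about orientability failing for iterates correctly explains why primality is needed for arbitrary $\F$.
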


The proof of Theorem~\ref{mt:bars_lower_bound} is based on a uniform
crossing energy bound for locally maximal, expansive, compact
invariant subsets of the geodesic flow. This is
Proposition~\ref{p:crossing_energy_period_1} which can be viewed as a
Morse theoretic analogue for closed geodesics of the crossing energy
theorem from \cite{Ginzburg:2014aa, Ginzburg:2018aa}
for Hamiltonian diffeomorphisms. In the context of generating
functions of Hamiltonian diffeomorphisms, an analogous statement was
proved by Allais, \cite[Sect.~7]{Allais:2020aa}. Similarly to the
Hamiltonian version that lies at the heart of the arguments in
\cite{Cineli:2021aa, Ginzburg:2014aa, Ginzburg:2018aa}, we expect
Proposition~\ref{p:crossing_energy_period_1} to have further
applications to the study of closed geodesics.

To illustrate the crossing energy theorem for geodesic flows, consider
the particular case where the locally maximal, expansive compact
invariant set is a periodic orbit $\gamma$, which corresponds to a
critical circle $S^1\cdot\gamma\subset\crit^+(\EE)$.  The local
maximality and expansiveness imply that there exists an open
neighborhood $U\subset M$ of the support of $\gamma$ such that no
other closed geodesic has support entirely contained in $U$. This
implies that there exists $\delta_U(\gamma)>0$ with the following
property. Consider an energy gradient flow line starting close to
$\gamma$, i.e., a solution $u\colon\R\to\Lambda U$ of the ordinary
differential equation $\dot u=\nabla\EE(u)$ such that $u(0)$ is
sufficiently $C^0$-close to $\gamma$, where $\Lambda U$ the loop space
of $U$.  If $u(0)\neq\gamma$, there exists a large positive or
negative time $s$ such that the support of the loop $u(s)\in\Lambda$
is not entirely contained in the open set $U$, and we have a uniform
energy drop $|\EE(u(0))-\EE(u(s))|\geq\delta_U(\gamma)$. If we repeat
the same argument replacing $\gamma$ with its $m$-th iterate
$\gamma^m$, an analogous energy gradient flow line would have energy
drop $\delta_U(\gamma^m)$, and \emph{a priori} $\delta_U(\gamma^m)$
may depend on the order of iteration $m$ and shrink as $m$ grows.  The
uniform crossing energy bound gives a positive lower bound for
$\delta_U(\gamma^m)$ independent of $m$. Indeed, we shall prove that
$\delta_U(\gamma^m)\to\infty$, but with a caveat: we are not able to
derive such a statement in the infinite dimensional setting $\Lambda$,
and instead we shall establish it using finite dimensional
approximations of $\Lambda$, as in Milnor's \cite{Milnor:1963aa}. This
will not prevent us from obtaining Theorem~\ref{mt:bars_lower_bound}
in the infinite dimensional setting $\Lambda$.

\subsection{Organization of the paper}
After recalling some preliminary elementary facts on the closed
geodesics barcode in Section~\ref{s:preliminaries}, we prove
Theorem~\ref{mt:bar<top_plus} in Section~\ref{s:bar<top}.  Assuming
Corollary~\ref{c:non_degenerate}, in Section~\ref{s:bar>top} we
establish Theorem~\ref{mt:bar>top_I} and
Corollary~\ref{c:surfaces}. In Section~\ref{s:crossing_energy_bound}
we introduce a finite dimensional setting for the energy action
functional and prove the uniform crossing energy bound,
Proposition~\ref{p:crossing_energy_period_1}. Finally, in
Section~\ref{s:persistence}, we prove
Theorem~\ref{mt:bars_lower_bound} and
Corollary~\ref{c:non_degenerate}.

\section{Closed geodesics barcode}
\label{s:preliminaries}

Let $(M,g)$ be a closed Riemannian manifold of dimension at least two,
and let $\EE \colon \Lambda\to[0,\infty)$ be the associated energy
functional defined by \eqref{eq:EE}. The length spectrum of the
Riemannian manifold is the set
\[\sigma=\sigma(g):=
  \big\{\sqrt{\EE(\gamma)}\ \big|\ \gamma\in\crit^+(\EE)\big\}.
\]
Since $\EE$ satisfies the Palais--Smale condition, \cite{Palais:1963aa},
$\sigma$ is closed. By Sard's theorem, $\sigma$ has measure
zero, and hence it is nowhere dense. Consider the persistence module
\begin{align*}
 H_a\ttoup^{i_{b,a}} H_b,\quad 0<a<b\leq\infty,
\end{align*}
where $H_a:=H_*(\Lambda^{<a}, \Lambda^0)$ and the maps $i_{b,a}$ are the
homomorphisms induced by the inclusion. It is well known that
\begin{align*}
 H_\epsilon=\{0\},\qquad\forall\epsilon\in(0,2\injrad(g)).
\end{align*}

The vector space $H_\infty$ is always infinite-dimensional.
If the Riemannian metric $g$ is bumpy (i.e., all closed geodesics,
including the iterated ones, are non-degenerate), $H_c$ is
finite-dimensional for any $c\in(0,\infty)$ since the critical set
$\crit(\EE)\cap\EE^{-1}(0,c^2)$ consists of finitely many
non-degenerate critical circles.  On the other hand, without the bumpy
assumption on the Riemannian metric $g$, the vector space $H_c$ may be
infinite-dimensional even for some finite $c$, but only if
$c\in\sigma$.

\begin{lem}
\label{l:H_b_finite_dimensional}
For any $b\in(0,\infty)\setminus\sigma$, the relative homology
group $H_b$ is finite-dimensional.
\end{lem}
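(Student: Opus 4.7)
The plan is to reduce the claim to finite-dimensional Morse theory via Milnor's polygonal approximation of the free loop space \cite{Milnor:1963aa}. For a positive integer $k$, let $\Lambda_k\subset\Lambda$ denote the subspace of loops that are geodesic on each subinterval $[j/k,(j+1)/k]$ and whose consecutive vertices $\gamma(j/k)$ lie within the injectivity radius of each other. Evaluation at the vertices identifies $\Lambda_k$ with an open subset of $M^k$, on which $\EE$ restricts to a smooth function $\EE_k$. A standard variational argument shows that the critical points of $\EE_k$ on $\Lambda_k$ are exactly those of $\EE$ that happen to lie in $\Lambda_k$, so in particular the critical values of $\EE_k$ are contained in $\{c^2\,|\,c\in\sigma\}$.

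First I would fix $k$ so large that $k>b^2/\injrad(g)^2$. By Cauchy--Schwarz, every $\gamma\in\Lambda^{\leq b}$ then satisfies
\[
d(\gamma(j/k),\gamma((j+1)/k))\leq b/\sqrt{k}<\injrad(g),
\]
so the broken-geodesic retraction $r\colon\Lambda^{<b}\to\Lambda_k^{<b}$ is well-defined and, by Milnor \cite{Milnor:1963aa}, is a homotopy equivalence. Since $r$ fixes every constant loop, it induces a homotopy equivalence of pairs $(\Lambda^{<b},\Lambda^0)\simeq(\Lambda_k^{<b},\Lambda_k^0)$, and it suffices to show that $H_*(\Lambda_k^{<b},\Lambda_k^0)$ is finite-dimensional.

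The same estimate shows that $\Lambda_k^{\leq b}$ is a closed, hence compact, subset of $M^k$. Since $b\notin\sigma$, the level $b^2$ is a regular value of $\EE_k$, so $\Lambda_k^{\leq b}$ is a compact smooth manifold with boundary $\EE_k^{-1}(b^2)$, and the inclusion $\Lambda_k^{<b}\hookrightarrow\Lambda_k^{\leq b}$ is a homotopy equivalence through a collar of the boundary. As the relative singular homology of a compact CW pair is finite-dimensional, $H_b\cong H_*(\Lambda_k^{\leq b},\Lambda_k^0)$ has finite dimension. The only potentially delicate ingredient is Milnor's homotopy equivalence and its compatibility with $\Lambda^0$; the latter is automatic and the former is classical, so no genuine obstacle arises.
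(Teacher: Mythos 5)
Your proof is correct, but it takes a genuinely different route from the paper's. The paper's argument perturbs the metric to a bumpy one via Anosov's bumpy metric theorem, interleaves the sublevel sets of the two energy functionals, and factors the isomorphism $i_{b,a}$ through the finite-dimensional homology of a bumpy sublevel set. You instead invoke Milnor's broken-geodesic finite-dimensional approximation: picking $k>b^2/\injrad(g)^2$, you pass to the compact manifold-with-boundary $\Lambda_k^{\leq b}$ (compact because the Cauchy--Schwarz estimate traps it in a compact subset of the open set $\Lambda_k\subset M^k$; a manifold with boundary because $b^2$ is a regular value once $b\notin\sigma$), and conclude finiteness from compactness. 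This avoids the deep bumpy metric theorem entirely, trading it for the classical Milnor homotopy equivalence, which is essentially the same finite-dimensional reduction the paper already sets up in Section 5 for the crossing energy bound; so your argument is well aligned with the rest of the paper's toolkit and arguably more self-contained, at the cost of re-deriving the homotopy equivalence of pairs that the paper sidesteps. One small imprecision: the positive critical values of $\EE_k$ lie in $\{c^2\mid c\in\sigma\}$, but $0$ is also a critical value; this does not affect the conclusion since you only need $b^2>0$ to be regular.
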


\begin{proof}
  Since the length spectrum $\sigma$ is closed, for each
  $b\in(0,\infty)\setminus\sigma$ there exists $a\in(0,b)$ such
  that $[a,b]\cap\sigma=\varnothing$. Namely, the interval $[a,b]$
  consists of regular values of the energy $\EE$. The usual gradient
  flow deformation result from Morse theory guarantees that the
  inclusion $\Lambda^{<a}\hookrightarrow\Lambda^{<b}$ is a homotopy
  equivalence and hence $i_{b,a}:H_a\to H_b$ is an isomorphism.

  By the bumpy metric theorem from \cite{Anosov:1982aa}, for any
  $\epsilon>0$ there exists a bumpy Riemannian metric $h$ on $M$ such
  that
\begin{align}
\label{e:h_g_equivalent}
 (1+\epsilon)^{-1}\|v\|_g \leq \|v\|_h \leq (1+\epsilon)\|v\|_g,
 \qquad\forall v\in TM.
\end{align}
Let $\FF \colon \Lambda\to[0,\infty)$ be the energy functional
associated with $h$, i.e.,
\begin{align*}
 \FF(\gamma)=\int_0^1 \|\dot\gamma\|_h^2\,dt.
\end{align*}
The inequalities~\eqref{e:h_g_equivalent} imply that
$(1+\epsilon)^{-2}\EE(\gamma) \leq \FF(\gamma) \leq
(1+\epsilon)^2\EE(\gamma)$ for all $\gamma\in\Lambda$. Then, choosing
$\epsilon>0$ small enough so that $(1+\epsilon)^2a<b$ and setting
$c:=(1+\epsilon)a$, we have the inclusion of sublevel sets
\begin{align*}
  \Lambda^{<a}=\EE^{-1}[0,a^2)
  \, \subseteq \, \FF^{-1}[0,c^2)
  \, \subseteq \, \EE^{-1}[0,b^2) = \Lambda^{<b}.
\end{align*}
Therefore, the isomorphism $i_{b,a}\colon H_a\to H_b$ factors through
the finite-dimensional vector space $H_*(\FF^{-1}[0,c^2),\FF^{-1}(0))$, and we
conclude that $H_b$ is finite-dimensional.
\end{proof}

\begin{lem}
  \label{l:tame}
  For all $a, b$ with $0<a<b\leq\infty$, the image $\im(i_{b,a})$ is
  finite-dimensional.
\end{lem}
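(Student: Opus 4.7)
The plan is to reduce the general case to Lemma~\ref{l:H_b_finite_dimensional} by a functoriality argument, exploiting that the length spectrum $\sigma$ is nowhere dense in $(0,\infty)$ so that regular levels are abundant.

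First I would dispose of the easy case: if $b<\infty$ and $b\notin\sigma$, then $H_b$ is already finite-dimensional by Lemma~\ref{l:H_b_finite_dimensional}, and $\im(i_{b,a})$ is a subspace of $H_b$.

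For the remaining cases, namely $b\in\sigma$ or $b=\infty$, I would choose an auxiliary level $b'\in(a,b)$ with $b'\notin\sigma$. Such a $b'$ exists because $\sigma$ is closed and of measure zero, hence nowhere dense, so the open set $(a,b)\setminus\sigma$ is nonempty. Functoriality of the persistence module gives the factorization $i_{b,a}=i_{b,b'}\circ i_{b',a}$, which yields
\[
\im(i_{b,a}) \;\subseteq\; \im(i_{b,b'}),
\]
and the right-hand side is a linear image of $H_{b'}$, which is finite-dimensional by Lemma~\ref{l:H_b_finite_dimensional} applied at the regular level $b'$. This completes the argument in both remaining cases.

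There is essentially no obstacle here: all the content sits in the preceding Lemma~\ref{l:H_b_finite_dimensional}, where the bumpy-metric approximation and the Palais--Smale gradient deformation were used to control the homology at regular levels. The present lemma is merely the observation that persistence maps from a fixed sublevel set can always be funneled through a finite-dimensional slot just above~$a$.
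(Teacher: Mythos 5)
Your proof is correct and follows essentially the same path as the paper: pick a non-spectral level $c\in(a,b)\setminus\sigma$, factor $i_{b,a}=i_{b,c}\circ i_{c,a}$, and bound $\im(i_{b,a})\subseteq\im(i_{b,c})$ by $\dim H_c<\infty$ via Lemma~\ref{l:H_b_finite_dimensional}. The preliminary case split you make (whether or not $b$ is regular) is harmless but unnecessary, since the factorization argument already handles every case uniformly.
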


\begin{proof}
  Fix a point $c\in(a,b)\setminus\sigma$. By
  Lemma~\ref{l:H_b_finite_dimensional}, $H_c$ is finite-dimensional,
  and so must be $\im(i_{b,c})$. Since $i_{b,a}=i_{b,c}\circ i_{c,a}$,
  we have $\im(i_{b,a})\subseteq \im(i_{b,c})$ and hence
  $\im(i_{b,a})$ is finite-dimensional.
\end{proof}

In the literature, persistence modules satisfying the assertion of
Lemma~\ref{l:tame} are sometimes called \emph{q-tame},
\cite{Chazal:2016aa}. To keep this paper self-contained, in the rest
of this section we briefly present some foundational results from the
theory of abstract persistence modules relevant for us and apply them in our setting.

Given any $c\in(0,\infty]$ and homology class $h\in H_c$, recall from
Section \ref{ss:barcode} the birth and death values
\begin{align*}
  \alpha(h)&:=\inf\big\{a\leq c\ \big|\
             h\in\textrm{im}(i_{c,a})\big\}\in[0,c),\\
  \beta(h)&:=\inf\big\{b>c\ \big|\
            h\in\ker(i_{b,c})\big\}\in[c,\infty],
\end{align*}
where as usual $\inf\varnothing=\infty$. If $h\neq0$, since
$H_\epsilon=\{0\}$ for a sufficiently small $\epsilon>0$, we have
$\alpha(h)\geq\epsilon$.  For all $0<a<b\leq\infty$, we define the
vector subspace 
\begin{align*}
  I_{b,a}:= \big\{h\in H_b\ \big|\ \alpha(h)\leq a\big\}
  = \bigcap_{c\in(a,b]} \im \left( i_{b,c} \right).
\end{align*}
Lemma~\ref{l:tame} implies that $I_{b,a}$ is finite-dimensional. Its
dimension $\dim I_{b,a}$ is the number of bars containing $[a,b)$.

\begin{lem}
  \label{l:I_finite}    
  For all $a, b$ with $0<a<b\leq\infty$ and for all sufficiently small
  $\epsilon\in(0,b-a)$, we have $I_{b,a}=\im(i_{b,a+\epsilon})$.
\end{lem}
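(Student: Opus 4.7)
The plan is to establish the equality by proving the two inclusions separately, with the forward direction being essentially definitional and the reverse direction using a dimension-stabilization argument based on Lemma~\ref{l:tame}.

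First I would observe the inclusion $I_{b,a}\subseteq\im(i_{b,a+\epsilon})$ for every $\epsilon\in(0,b-a)$, which is immediate from the definition $I_{b,a}=\bigcap_{c\in(a,b]}\im(i_{b,c})$. So the nontrivial content is the reverse inclusion $\im(i_{b,a+\epsilon})\subseteq I_{b,a}$ for all sufficiently small $\epsilon$.

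Next, I would note that the family $\{\im(i_{b,a+\epsilon})\}_{\epsilon\in(0,b-a)}$ is monotone nonincreasing as $\epsilon\to 0^+$: whenever $0<\epsilon<\epsilon'<b-a$, the factorization $i_{b,a+\epsilon}=i_{b,a+\epsilon'}\circ i_{a+\epsilon',a+\epsilon}$ yields $\im(i_{b,a+\epsilon})\subseteq\im(i_{b,a+\epsilon'})$. Fix any $\epsilon_1\in(0,b-a)$; by Lemma~\ref{l:tame}, the ambient space $\im(i_{b,a+\epsilon_1})$ is finite-dimensional. Hence the decreasing chain of its subspaces $\im(i_{b,a+\epsilon})$ for $\epsilon\in(0,\epsilon_1]$ must stabilize for dimensional reasons: there exists $\epsilon_\ast\in(0,\epsilon_1]$ such that $\im(i_{b,a+\epsilon})=\im(i_{b,a+\epsilon_\ast})$ for every $\epsilon\in(0,\epsilon_\ast]$.

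For any $h\in\im(i_{b,a+\epsilon_\ast})$, I would then verify that $\alpha(h)\leq a$. Given any $\epsilon\in(0,\epsilon_\ast]$, the stabilization gives $h\in\im(i_{b,a+\epsilon})$, so $\alpha(h)\leq a+\epsilon$; letting $\epsilon\to 0^+$ yields $\alpha(h)\leq a$, i.e., $h\in I_{b,a}$. Combined with the forward inclusion applied at $\epsilon=\epsilon_\ast$, this gives $I_{b,a}=\im(i_{b,a+\epsilon_\ast})$, and the same equality holds for every smaller $\epsilon$ by the stabilization.

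The only potential obstacle is making sure the stabilization argument is applied correctly, i.e., that we have finite-dimensionality of the ambient image at a single positive scale (provided by Lemma~\ref{l:tame}) so that the descending chain condition is available; once that is in place the rest is purely formal.
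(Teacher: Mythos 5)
Your proof is correct and uses essentially the same mechanism as the paper's: a nested, decreasing (as $\epsilon\to 0^+$) family of finite-dimensional subspaces must stabilize, and the stable value is identified with $I_{b,a}$ via the two obvious inclusions. The only cosmetic difference is that you run the stabilization argument directly on the family $\{\im(i_{b,a+\epsilon})\}$, whereas the paper's (very terse) proof runs it on the family $\{I_{b,a+\epsilon}\}$ and then implicitly uses the sandwich $I_{b,a}\subseteq\im(i_{b,a+\epsilon})\subseteq I_{b,a+\epsilon}$; your version spells out the final identification a bit more explicitly, but the idea is the same.
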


\begin{proof}
  For any $\epsilon\in(0,b-a)$, we have
  $I_{b,a}\subseteq I_{b,a+\epsilon}$. This, together with the fact
  that these vector spaces are finite-dimensional, implies that
  $I_{b,a}=I_{b,a+\epsilon}$, provided that $\epsilon$ is small
  enough. Therefore, $I_{b,a}=\im(i_{b,a+\epsilon})$ for any
  sufficiently small $\epsilon\in(0,b-a)$.
\end{proof}

Let $0<a\leq b\leq\infty$ and consider the subspace of $H_a$ given
by
\begin{align*}
 K_{b,a}:=
 \big\{h\in H_a\ \big|\ \beta(h)\leq b\big\} 
 =
 \left\{
   \begin{array}{@{}ll}
     \displaystyle \bigcap_{c>b} \ker\left(i_{c,a}\right)
     & \mbox{if }b<\infty,\vspace{12pt} \\ 
    H_a & \mbox{if }b=\infty. 
  \end{array}
 \right.
\end{align*}
The dimension $\dim K_{b,a}$ is the number of bars of the form
$[a',b')$, where $a'<a\leq b'\leq b$. Notice that, unlike $I_{b,a}$,
the vector space $K_{b,a}$ can be infinite-dimensional (even when
$b<\infty$).
Next, for $c\in(a,b]$, we define
\begin{align*}
  V_{[a,b),c} & :=\big\{ h\in H_c\ \big|\ \alpha(h)\leq a,\
                \beta(h)\leq b \big\}
=K_{b,c}\cap I_{c,a}.
\end{align*}
By Lemma~\ref{l:tame}, 
$V_{[a,b),c}$ is a finite-dimensional vector subspace of $H_c$.  The
dimension $\dim V_{[a,b),c}$ is the number of bars of the form
$[a',b')$ with $a'\leq a<c\leq b'\leq b$.

\begin{lem}
\label{l:IK_finite}
If $b<\infty$, for any sufficiently small $\epsilon\in(0,b-a)$ we have                                                               
\[
  V_{[a,b),c}=\ker(i_{b+\epsilon,c})\cap \im(i_{c,a+\epsilon}).
\]
\end{lem}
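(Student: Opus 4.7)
The plan is to reduce the statement to a stabilization argument inside the finite-dimensional space $I_{c,a}$, using Lemma~\ref{l:I_finite} to handle the image side and Lemma~\ref{l:tame} to justify finite-dimensionality.

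First I would rewrite the right-hand side and the definition of $V_{[a,b),c}$ in parallel. Recall that, by definition,
\[
  V_{[a,b),c} = K_{b,c}\cap I_{c,a}
  = \Bigl( \bigcap_{\epsilon'>0}\ker(i_{b+\epsilon',c}) \Bigr)
  \cap \Bigl( \bigcap_{\delta'>0}\im(i_{c,a+\delta'}) \Bigr).
\]
By Lemma~\ref{l:I_finite}, for all sufficiently small $\delta>0$ we have $I_{c,a}=\im(i_{c,a+\delta})$, and by Lemma~\ref{l:tame} this vector space is finite-dimensional. Set $W:=I_{c,a}$.

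Next I would analyse the chain $\{\ker(i_{b+\epsilon',c})\cap W\}_{\epsilon'>0}$ as $\epsilon'\to 0^+$. For $0<\epsilon_1<\epsilon_2$, the factorization $i_{b+\epsilon_2,c}=i_{b+\epsilon_2,b+\epsilon_1}\circ i_{b+\epsilon_1,c}$ yields $\ker(i_{b+\epsilon_1,c})\subseteq\ker(i_{b+\epsilon_2,c})$, so the family $\ker(i_{b+\epsilon',c})\cap W$ is decreasing (in the sense of set inclusion) as $\epsilon'\to 0^+$. Since $W$ is finite-dimensional, this descending chain of subspaces stabilizes: there exists $\epsilon_0>0$ such that for every $\epsilon\in(0,\epsilon_0]$,
\[
  \ker(i_{b+\epsilon,c})\cap W
  = \bigcap_{\epsilon'>0}\ker(i_{b+\epsilon',c})\cap W
  = K_{b,c}\cap W.
\]

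Combining the two steps, for any $\epsilon>0$ smaller than both the threshold coming from Lemma~\ref{l:I_finite} and the above $\epsilon_0$, and also smaller than $b-a$, we obtain
\[
  \ker(i_{b+\epsilon,c})\cap\im(i_{c,a+\epsilon})
  = \ker(i_{b+\epsilon,c})\cap W
  = K_{b,c}\cap W
  = V_{[a,b),c},
\]
which is the desired equality. The only mildly delicate point is making sure that a single $\epsilon$ works for both the image-stabilization (Lemma~\ref{l:I_finite}) and the kernel-stabilization, but since each condition holds for all sufficiently small $\epsilon$, taking the minimum of the two thresholds suffices. No genuine obstacle arises beyond the bookkeeping of these two thresholds.
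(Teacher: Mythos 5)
Your proof is correct and follows essentially the same route as the paper's: the key step in both is that a monotone chain of subspaces of the finite-dimensional vector space $I_{c,a}$ must stabilize, combined with Lemma~\ref{l:I_finite} to replace $I_{c,a}$ by $\im(i_{c,a+\epsilon})$. The paper phrases the stabilization via the inclusion $V_{[a,b),c}\subseteq V_{[a,b+\epsilon),c}$ and then (implicitly) sandwiches $\ker(i_{b+\epsilon,c})\cap I_{c,a}$ between the two, whereas you work directly with the descending chain $\ker(i_{b+\epsilon',c})\cap I_{c,a}$ -- a touch more explicit, but the same argument.
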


\begin{proof}
  For all $\epsilon>0$, we have
  $V_{[a,b),c} \subseteq V_{[a,b+\epsilon),c}$.  Since these vector
  spaces are finite-dimensional, $V_{[a,b),c} = V_{[a,b+\epsilon),c}$
  whenever $\epsilon>0$ is sufficiently small.  Hence
  $V_{[a,b),c}= \ker(i_{b+\epsilon,c})\cap I_{c,a}$.  Finally,
  applying Lemma~\ref{l:I_finite}, we obtain the desired equality.
\end{proof}

We define the vector subspaces
\begin{align*}
  W_{[a,b),c} & :=\big\{ h\in H_c\ \big|\ \alpha(h)< a,\
                \beta(h)\leq b \big\}=K_{b,c}\cap\im(i_{c,a})
                \textrm{ \ and }\\
  Z_{[a,b),c} & :=\big\{ h\in H_c\ \big|\ \alpha(h)\leq  a,\
                \beta(h)< b \big\}=\ker(i_{b,c})\cap I_{c,a}
\end{align*}
of $V_{[a,b),c}$. In view of Lemma~\ref{l:IK_finite}, for $\epsilon>0$
small enough we have
\begin{align*}
Z_{[a,b),c} &=\ker(i_{b,c})\cap \im(i_{c,a+\epsilon})
\end{align*}
and, if $b<\infty$, 
\begin{align*}
 W_{[a,b),c} &=\ker(i_{b+\epsilon,c})\cap\im(i_{c,a}).
\end{align*}
Recall from Section~\ref{ss:barcode} that the closed geodesics barcode
$\B=\B(g;\F)$ is the collection of all pairs $([a,b),n)$, where
$[a,b)\subset(0,\infty)$ and
\[n=\dim\left(\frac{V_{[a,b),c}}{W_{[a,b),c}+Z_{[a,b),c}}\right)<\infty,
  \qquad\forall c\in(a,b].\]

\begin{lem}\label{l:n_finite}
  The dimension $\dim(V_{[a,b),c}/(W_{[a,b),c}+Z_{[a,b),c}))$ is
  independent of the value $c\in(a,b]$.
\end{lem}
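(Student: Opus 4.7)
The plan is to exhibit, for every pair $c, c' \in (a, b]$ with $c \le c'$, an isomorphism
\[
\bar{i}_{c', c} \colon V_{[a, b), c} / (W_{[a, b), c} + Z_{[a, b), c}) \xrightarrow{\sim} V_{[a, b), c'} / (W_{[a, b), c'} + Z_{[a, b), c'})
\]
induced by the inclusion homomorphism $i_{c', c} \colon H_c \to H_{c'}$. Since any two values in $(a, b]$ are comparable under $\le$, this will prove the lemma.

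First I would check well-definedness, that is, that $i_{c', c}$ sends $V_c$, $W_c$, $Z_c$ into $V_{c'}$, $W_{c'}$, $Z_{c'}$ respectively. This is a direct consequence of the compatibility $i_{b', c'} \circ i_{c', c} = i_{b', c}$ for $c \le c' \le b'$ together with the characterizations of $V, W, Z$ from Section~\ref{ss:barcode}. For example, if $h \in V_c$, then Lemma~\ref{l:I_finite} lets us write $h = i_{c, a + \epsilon}(k)$ for small $\epsilon$, whence $i_{c', c}(h) = i_{c', a + \epsilon}(k) \in I_{c', a}$; and $h \in K_{b, c}$ forces $i_{b', c'}(i_{c', c}(h)) = i_{b', c}(h) = 0$ for all $b' > b$, so $i_{c', c}(h) \in K_{b, c'}$. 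The analogous checks for $W$ and $Z$ are essentially the same calculation, so $\bar{i}_{c', c}$ is well defined.

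Next, for surjectivity, given $h' \in V_{c'}$, I use $h' \in I_{c', a}$ and Lemma~\ref{l:I_finite} to write $h' = i_{c', a + \epsilon}(k)$ for some $k \in H_{a + \epsilon}$ with $\epsilon$ sufficiently small that Lemma~\ref{l:I_finite} also applies at level $c$. Setting $h := i_{c, a + \epsilon}(k)$, I get $i_{c', c}(h) = h'$ by composition, $\alpha(h) \le a$ from Lemma~\ref{l:I_finite} at level $c$, and $\beta(h) \le b$ from $i_{b', c}(h) = i_{b', c'}(h') = 0$ for $b' > b$. Hence $h \in V_c$ is a lift of $h'$, which even shows that $i_{c', c} \colon V_c \to V_{c'}$ is surjective, and therefore $\bar{i}_{c', c}$ is surjective on the quotients.

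Finally, for injectivity, suppose $h \in V_c$ with $i_{c', c}(h) = w' + z'$ where $w' \in W_{c'}$, $z' \in Z_{c'}$. Writing $w' = i_{c', a}(k_1)$ and $z' = i_{c', a + \epsilon}(k_2)$ via the definition of $W_{c'}$ and Lemma~\ref{l:I_finite} respectively, the same construction as in the surjectivity step produces lifts $w := i_{c, a}(k_1) \in W_c$ and $z := i_{c, a + \epsilon}(k_2) \in Z_c$, the crucial point being that $c' \le b$ allows us to factor $i_{b, c} = i_{b, c'} \circ i_{c', c}$ and deduce the required $\beta$-bounds on $w$ and $z$. Then $h - w - z \in V_c \cap \ker(i_{c', c}) \subseteq V_c \cap \ker(i_{b, c}) = Z_c$, so $h \in W_c + Z_c$ and $\bar{i}_{c', c}$ is injective. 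I expect the main subtlety to be the careful bookkeeping of strict versus non-strict inequalities in the definitions of $\alpha$ and $\beta$ and their interplay with Lemma~\ref{l:I_finite}; once this is under control, the rest is a short diagram chase.
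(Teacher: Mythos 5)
Your proposal is correct and follows essentially the same route as the paper: both arguments use the inclusion-induced homomorphism $i_{c',c}$ (the paper writes $i_{d,c}$) together with Lemma~\ref{l:I_finite} (or its consequence Lemma~\ref{l:IK_finite}) to show that it induces an isomorphism between the quotients. The only cosmetic difference is in the injectivity step: the paper lifts only the $W_{c'}$-component $w'$ to some $w \in W_c$ and then observes that the remainder $h - w$ automatically lands in $Z_c$, whereas you lift both $w'$ and $z'$ and then argue that the residual $h - w - z$ lies in $\ker(i_{c',c}) \cap V_c \subseteq Z_c$; both work. One small imprecision worth fixing in the well-definedness paragraph: from $h = i_{c, a+\epsilon}(k)$ you conclude $i_{c',c}(h) = i_{c', a+\epsilon}(k) \in I_{c',a}$, but this requires $\epsilon$ to be small enough that Lemma~\ref{l:I_finite} applies at level $c'$ as well as at level $c$ — or, more cleanly, simply observe that $\alpha(i_{c',c}(h)) \le \alpha(h)$ and $\beta(i_{c',c}(h)) \le \beta(h)$, which makes well-definedness immediate for all three of $V$, $W$, $Z$ without invoking Lemma~\ref{l:I_finite} at all.
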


\begin{proof}
  Let $a<b<\infty$. In order to simplify the notation, let us suppress
  $[a,b)$ in the notation
  and simply write $V_c=V_{[a,b),c}$, $W_c=W_{[a,b),c}$, and
  $Z_c=Z_{[a,b),c}$ for all $c\in(a,b]$. We fix $c$ and $d$ so that
  $a<c<d\leq b$, and choose $\epsilon>0$ small enough so that the
  assertion of Lemma~\ref{l:IK_finite} holds. The homomorphism
  $i_{d,c}$ restricts to surjective homomorphisms
\begin{align*}
  V_{c}=\ker(i_{b+\epsilon,c})\cap\im(i_{c,a+\epsilon})
  &\eepi^{i_{d,c}}\ker(i_{b+\epsilon,d})\cap\im(i_{d,a+\epsilon})=V_{d},
  \textrm{ \ and  }\\
  W_{c}=\ker(i_{b+\epsilon,c})\cap\im(i_{c,a})
  &\eepi^{i_{d,c}}\ker(i_{b+\epsilon,d})\cap\im(i_{d,a})=W_{d}.
\end{align*}
Consider $h\in V_c$ such that $i_{d,c}(h)=w'+z'\in W_d+Z_d$, with
$w'\in W_d$ and $z'\in Z_d$. Since $i_{d,c}(W_c)=W_d$, there exists
$w\in W_c$ such that $i_{d,c}(w)=w'$. The vector $z:=h-w$ satisfies
$i_{d,c}(z)=z'$, and since $i_{b,c}(z)=i_{b,d}(z')=0$, we infer that
$z\in Z_c$, and $h\in W_c+Z_c$. This shows that the kernel of the
induced homomorphism
\begin{align*}
 V_c \ttoup^{i_{d,c}} \frac{V_d}{W_d+Z_d}
\end{align*}
is $W_c+Z_c$, and hence $i_{d,c}$ induces an isomorphism
\begin{align*}
 \frac{V_c}{W_c+Z_c} \ttoup^{i_{d,c}}_{\cong} \frac{V_d}{W_d+Z_d}.
\end{align*}
The case of $b=\infty$ is analogous.
\end{proof}

\begin{prop}
\label{p:normal_form}
For any $c\in(0,\infty)\setminus\sigma$ and $([a,b),n)\in\B$ such
that $c\in(a,b]$, there exists an $n$-dimensional vector subspace
$B_{[a,b),c}\subset H_c$ such that $\alpha(h)=a$ and $\beta(h)=b$ for
all $h\in B_{[a,b),c}\setminus\{0\}$. These vector subspaces can be
chosen so that $H_c$ decomposes as a direct sum
\[ H_c = \bigoplus_{[a,b)\ni c} B_{[a,b),c}. \]
\end{prop}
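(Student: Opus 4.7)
The plan is to reduce the statement to the classical structure theorem for pointwise finite-dimensional persistence modules over a finite linearly ordered set, using the hypothesis $c \notin \sigma$ crucially. By Lemma~\ref{l:H_b_finite_dimensional}, $\dim H_c < \infty$. Because the subspaces $I_{c,a}$ and $K_{b,c}$ form increasing families of subspaces of $H_c$ in the parameters $a$ and $b$ respectively, finite-dimensionality forces each family to jump at only finitely many parameter values. Consequently both $\alpha$ and $\beta$ take only finitely many values on $H_c \setminus \{0\}$, and only finitely many bars $([a,b),n) \in \B$ with $c \in (a,b]$ carry nonzero multiplicity.

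Next I pick a finite subset $T \subset (0,\infty) \setminus \sigma$ that contains $c$ and is chosen dense enough to bracket every jump of $I_{c,\cdot}$ and $K_{\cdot,c}$: explicitly, for each birth value $a$ of a bar through $c$, I include points just below and just above $a$ (possible since $\sigma$ is nowhere dense), and similarly for each finite death value $b$. Adding one large value $t_{\max}$ larger than every finite death value guarantees that bars with $b = \infty$ appear as intervals extending to $t_{\max}$. Since $T \cap \sigma = \varnothing$, Lemma~\ref{l:H_b_finite_dimensional} makes the restriction of $(H_t, i_{t',t})$ to $T$ a pointwise finite-dimensional representation of a finite linearly ordered set. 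By the classical structure theorem (Gabriel's theorem for type $A$ quivers, or elementary linear algebra applied inductively to the sequence of inclusion maps by choosing adapted bases), it decomposes as a direct sum of interval modules $\bigoplus_j M_{I_j}$.

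Evaluating this decomposition at $c$ yields a splitting $H_c = \bigoplus_j L_j$ into one-dimensional summands, one for each $j$ with $c \in I_j$. The density of $T$ guarantees a bijective correspondence between these interval summands and the bars $[a,b) \in \B$ with $c \in (a,b]$: each $I_j$, bracketed in $T$ between the chosen witnesses just below $a$ and just above $b$, corresponds to the bar $[a,b)$. I then define $B_{[a,b),c}$ to be the direct sum of the $L_j$ corresponding to the bar $[a,b)$, which immediately yields the decomposition $H_c = \bigoplus_{[a,b) \ni c} B_{[a,b),c}$. The witness points in $T$ force $\alpha(h) = a$ and $\beta(h) = b$ for every nonzero $h \in B_{[a,b),c}$: indeed $h$ lies in $\im(i_{c,t})$ for the $T$-witness $t$ just above $a$ but not for the witness just below $a$, so $\alpha(h) = a$, and analogously for $\beta$.

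The step I expect to require the most care is matching the multiplicities of this abstract interval decomposition against the paper's definition $n_{[a,b)} = \dim V_{[a,b),c}/(W_{[a,b),c} + Z_{[a,b),c})$. This is a direct bookkeeping verification using the decomposition: expressing the relevant subspaces in terms of the $B_{[a',b'),c}$ gives
\[
V_{[a,b),c} = \bigoplus_{\substack{[a',b') \ni c \\ a' \leq a,\, b' \leq b}} B_{[a',b'),c},
\qquad
W_{[a,b),c} + Z_{[a,b),c} = \bigoplus_{\substack{[a',b') \ni c \\ a' < a \text{ or } b' < b}} B_{[a',b'),c},
\]
since each summand $B_{[a',b'),c}$ sits inside $V_{[a,b),c}$ precisely when $a' \leq a$ and $b' \leq b$, and contributes to the $W+Z$ part exactly when at least one inequality is strict. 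The quotient is therefore canonically identified with $B_{[a,b),c}$, so $\dim B_{[a,b),c} = n_{[a,b)}$ as required, completing the proof.
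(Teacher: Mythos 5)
Your proof is correct and reaches the same conclusion, but by a different packaging of essentially the same linear algebra. The paper constructs nested, compatible bases of the filtered subspaces $V_{j,i}=K_{b_j,c}\cap I_{c,a_i}$ directly, whereas you restrict the persistence module to a finite non-spectral witness set $T$, invoke the structure theorem for representations of a type-$A$ quiver, and then translate that interval decomposition back into statements about $\alpha$, $\beta$, and the $V/(W+Z)$ multiplicities. The paper's nested-basis construction is in fact a self-contained proof of the very $A_n$ decomposition you cite, so the two arguments have the same mathematical core; what your route buys is conciseness by quoting a standard result, and what it costs is the extra bookkeeping of choosing witnesses bracketing every jump of $I_{c,\cdot}$ and $K_{\cdot,c}$ and then matching $T$-intervals to bars. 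One minor slip to fix: your displayed formula
\[
W_{[a,b),c} + Z_{[a,b),c} = \bigoplus_{\substack{[a',b') \ni c \\ a' < a \text{ or } b' < b}} B_{[a',b'),c}
\]
over-indexes the sum. Since $W+Z\subseteq V_{[a,b),c}$ and $V_{[a,b),c}=\bigoplus_{a'\leq a,\,b'\leq b}B_{[a',b'),c}$, the correct index set is $\{[a',b')\ni c : a'\leq a,\, b'\leq b,\ (a',b')\neq(a,b)\}$. Your prose (``sits inside $V_{[a,b),c}$ precisely when $a'\leq a$ and $b'\leq b$, and contributes to the $W+Z$ part exactly when at least one inequality is strict'') has the right intent, so this is a typographical rather than a conceptual error; with that correction the quotient $V/(W+Z)\cong B_{[a,b),c}$ and hence $\dim B_{[a,b),c}=n_{[a,b)}$ follow as you claim.
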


\begin{proof}
  Note that $H_c$ is finite-dimensional as
  $c\in(0,\infty)\setminus\sigma$. Since
  $I_{c,a}\subseteq I_{c,a'}$ and $K_{b,c}\subseteq K_{b',c}$ for all
  $a<a'$ and $b<b'$, there exist finitely many values $a_i$ and $b_j$
  with
  \[0=:a_0<a_1<\ldots <a_h<c<b_1<\ldots <b_k\leq\infty\] such that
\begin{itemize}

\item $\im(i_{c,a_1})=\{0\}$ and $I_{c,a_h}=H_c$, 

\item $\im(i_{c,a})=I_{c,a_i}\subsetneq I_{c,a_{i+1}}$ for all
  $i\in\{1,\ldots ,h-1\}$ and $a\in(a_i,a_{i+1}]$, 

\item $\ker(i_{b_1,c})=H_c$ and $K_{b_k,c}=\{0\}$,

\item $\ker(i_{b,c})=K_{b_j,c}\subsetneq K_{b_{j+1},c}$ for all
  $j\in\{1,\ldots ,k-1\}$ and $b\in(b_j,b_{j+1}]$.

\end{itemize}
Namely, all bars containing $c$ are of the form $[a_i,b_j)$ for some
$i>0$ and $j$. Let
\begin{align*}
V_{j,i} & :=V_{[a_i,b_j),c}=K_{b_j,c}\cap I_{c,a_i},
\end{align*}
and notice that 
\begin{align*}
  V_{j_1,i_1}\cap V_{j_2,i_2} = V_{j_3,i_3},
  \qquad\mbox{where }i_3=\min\{i_1,i_2\},\ j_3=\min\{j_1,j_2\}.
\end{align*}
Now, set $\YY_{0,0}=\YY_{1,0}=\YY_{0,1}=\varnothing$ and choose a
basis $\YY_{1,1}$ of $V_{1,1}$. We next proceed inductively for increasing values of the integer $m=2,\dotsc ,h+k$: assume that we have already chosen the bases $\YY_{j,i}$ for $V_{j,i}$ for all $j,i$ such that $j+i<m$; for all $j,i$ such that $j+i=m$, we choose a basis $\YY_{j,i}$ of $V_{j,i}$ by completing the set $\YY_{j-1,i}\cup\YY_{j,i-1}$. At the end of the process, we obtain bases that satisfy
\begin{align*}
  \YY_{j_1,i_1}\cap \YY_{j_2,i_2} = \YY_{j_3,i_3},
  \qquad\mbox{where }i_3=\min\{i_1,i_2\},\ j_3=\min\{j_1,j_2\}.
\end{align*}
We define the vector subspaces
\begin{align*}
  B_{j,i}:=\mathrm{span}(\YY_{j,i}\setminus(\YY_{j-1,i}\cup\YY_{j,i-1}))
  \subset V_{j,i}.
\end{align*}
Notice that $\alpha(h)=a_i$ and $\beta(h)=b_j$ for all
$h\in B_{j,i}\setminus\{0\}$. The union $\YY_{j-1,i}\cup\YY_{j,i-1}$
is a basis for the vector space
\begin{align*}
V_{j,i-1} + V_{j-1,i} = W_{[a_i,b_j),c}+Z_{[a_i,b_j),c}.
\end{align*}
Thus the quotient maps $V_{j,i}\to V_{j,i}/(V_{j,i-1} + V_{j-1,i})$
restrict to isomorphisms
\begin{align}
\label{e:iso_B_j_i}
  B_{j,i} \ttoup^{\cong} \frac{V_{j,i}}{V_{j,i-1} + V_{j-1,i}}.
\end{align}
This, together with the definition of the barcode, implies that
$n_{j,i}:=\dim B_{j,i}$ is the multiplicity of the bar $[a_i,b_j)$;
namely $([a_i,b_j),n_{j,i})\in\B$.  The basis $\YY_{k,h}$ of
$V_{k,h}=H_c$ can be decomposed as a disjoint union
\begin{align*}
  \YY_{k,h} = \bigcup_{\substack{i=1,\ldots ,h\\ j=1,\ldots ,k}}
  \YY_{j,i}\setminus(\YY_{j-1,i}\cup\YY_{j,i-1}),
\end{align*}
where $\YY_{j,i}\setminus(\YY_{j-1,i}\cup\YY_{j,i-1})$ is the chosen
basis of $B_{j,i}$. Therefore, we have the direct sum decomposition
\[
 H_c=\bigoplus_{\substack{i=1,\ldots ,h\\ j=1,\ldots ,k}} \!\!\! B_{j,i}.
\qedhere
\]
\end{proof}

Recall from Section~\ref{ss:main_results} that, for any $\epsilon>0$
and $c>0$, the set $\B_{\epsilon,c}\subset\B$ is the subcollection of
those bars of size at least $\epsilon$, and $b_{\epsilon,c}$ is its
cardinality counted with multiplicities, i.e.,
\begin{align*}
 b_{\epsilon,c} = \sum_{\substack{a\leq c\\ b-a\geq\epsilon}} \dim B_{[a,b),c}.
\end{align*}

\begin{lem}
\label{l:b_epsilon_c_finite}
For all real values $\epsilon>0$ and $c>0$, we have
$b_{\epsilon,c}<\infty$.
\end{lem}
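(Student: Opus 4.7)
The plan is to bound $b_{\epsilon,c}$ by the total dimension of finitely many finite-dimensional homology groups $H_{c_k}$, evaluated at regular values $c_k$ of $\sqrt{\EE}$ chosen so that every bar contributing to $\B_{\epsilon,c}$ contains at least one of them. The two ingredients are Lemma~\ref{l:H_b_finite_dimensional}, which gives finite-dimensionality off the length spectrum $\sigma$, and Proposition~\ref{p:normal_form}, which expresses $\dim H_{c_k}$ as the sum of the multiplicities of the bars containing $c_k$.

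Concretely, I would first choose real numbers $0<c_1<c_2<\ldots<c_N$ lying in $(0,\infty)\setminus\sigma$ with $c_1<\epsilon$, $c_N>c$, and consecutive spacing $c_{k+1}-c_k<\epsilon$. Such a choice is possible because $\sigma$ is closed and nowhere dense, as recalled at the beginning of Section~\ref{s:preliminaries}: one starts from any nominal evenly spaced grid with spacing strictly less than $\epsilon$ covering $(0,c+\epsilon]$, and perturbs each point slightly off of $\sigma$. For each such $c_k$, Lemma~\ref{l:H_b_finite_dimensional} yields $\dim H_{c_k}<\infty$, and Proposition~\ref{p:normal_form} provides the direct sum decomposition
\[
 H_{c_k}=\bigoplus_{[a,b)\ni c_k} B_{[a,b),c_k},
\]
in which the summand associated to a bar $[a,b)$ has dimension equal to its multiplicity $n_{[a,b)}$.

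The key combinatorial step is to check that every bar $[a,b)\in\B_{\epsilon,c}$ contains some $c_k$ in the sense that $c_k\in(a,b]$: since $a\leq c$ and $b-a\geq\epsilon$, the subinterval $(a,a+\epsilon]\subseteq(a,b]$ has length $\epsilon$ and lies in $(0,c+\epsilon]$, so by construction of the $\epsilon$-dense net $\{c_1,\ldots,c_N\}$ it must contain some $c_k$. Summing bar multiplicities, each bar in $\B_{\epsilon,c}$ is counted at least once on the right-hand side, and therefore
\[
 b_{\epsilon,c}
 =\!\!\!\sum_{[a,b)\in\B_{\epsilon,c}}\!\!\! n_{[a,b)}
 \,\leq\, \sum_{k=1}^{N}\,\sum_{[a,b)\ni c_k} n_{[a,b)}
 =\sum_{k=1}^{N}\dim H_{c_k}<\infty.
\]
No step is a serious obstacle; the only mild care needed is to perturb the chosen grid off of $\sigma$, which is immediate from Sard's theorem.
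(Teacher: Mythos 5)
Your proof is correct and follows essentially the same route as the paper's: both pick a finite $\epsilon$-net of non-spectral values starting below $\epsilon$ and ending above $c$, invoke Lemma~\ref{l:H_b_finite_dimensional} for finite-dimensionality at those values, and use the decomposition of Proposition~\ref{p:normal_form} to bound $b_{\epsilon,c}$ by $\sum_k\dim H_{c_k}$. The only difference is that you spell out the combinatorial step (each bar in $\B_{\epsilon,c}$ meets the net because $(a,a+\epsilon]\subseteq(a,b]$ lies in the covered range) a bit more explicitly than the paper does.
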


\begin{proof}
  Fix non-spectral values
  $0=:c_0,c_1,\ldots ,c_n\in(0,\infty)\setminus \sigma$ such that
  $c_n>c$ and $c_{i}<c_{i+1}<c_{i}+\epsilon$ for all
  $i\in\{0,\dotsc ,n\}$. Notice that $H_{c_i}$ is finite-dimensional
  for each $i\in\{1,\dotsc ,n\}$ by
  Lemma~\ref{l:H_b_finite_dimensional}. Moreover, any bar $[a,b)$ with
  $a\leq c$ and $b-a\geq\epsilon$ must contain some value $c_i$. Along
  with Proposition~\ref{p:normal_form}, this implies that
\[
 b_{\epsilon,c}\leq \sum_{i=1}^n \dim H_{c_i}<\infty.
\qedhere
\]
\end{proof}

\section{Upper bound on the barcode entropy}
\label{s:bar<top}

In this section we prove Theorem \ref{mt:bar<top_plus}, and briefly
discuss a minor generalization to a relative version of barcode
entropy. The proof roughly follows the same path as the proof of
\cite[Theorem~A]{Cineli:2021aa}, based on an argument in the spirit of
integral geometry involving Lagrangian tomographs. In the setting of
geodesic flows, we need to work with specific Lagrangian tomographs.

\subsection{Tomographs}
\label{ss:tomographs}
Let $(M,g)$ be a closed Riemannian manifold. If $\dim(M)\leq 1$, the
barcode entropy $\hhbar=\hhbar(g;\F)$ vanishes, and
Theorem~\ref{mt:bar<top_plus} is trivially satisfied. Therefore, from
now on we shall assume that $\dim(M)\geq2$.  For some integer $k>0$,
consider an open neighborhood $Z\subset\R^k$ of the origin, and a
smooth map \[\psi\colon Z\times M\to M,\] which we treat as a family
of maps $\psi_z=\psi(z,\cdot)\colon M\to M$ parameterized by $z\in Z$,
satisfying the following three conditions:
\begin{enumerate}
\setlength\itemsep{2pt}

\item[\reflb{itm:T1}{(i)}] $\psi_0=\id$,

\item[\reflb{itm:T2}{(ii)}] $\psi_z\colon M\to M$ is a diffeomorphism
  for each $z\in Z$,

\item[\reflb{itm:T3}{(iii)}]  the associated map
  $\Psi\colon Z\times TM\to TM\times TM$, given by
\begin{align*}
\Psi(z,v)=(d\psi_z(x)^*v,v),\ \ \forall v\in T_{\psi_z(x)}M,
\end{align*}
has surjective differential $d\Psi(z,v)$ at all points
$(z,v)\in Z\times TM\!\setminus\!\zerosection$.
\end{enumerate}

In point (iii), we denoted by
$d\psi_z(x)^*\colon T_{\psi_z(x)}M\to T_xM$ the adjoint of
$d\psi_z(x)$ with respect to the Riemannian metric $g$, which is
defined by
\[g(d\psi_z(x)^*v,w)=g(v,d\psi_z(x)w).\] In integral geometry, maps
such as $\Psi$ are sometimes referred to as \emph{tomographs}. With a
slight abuse of terminology, we shall instead call $\psi$ a tomograph.

\begin{lem}
  For some integer $k>0$ and some open neighborhood $Z$ of the origin,
  there exists a smooth map $\psi\colon Z\times M\to M$ satisfying
  conditions $\ref{itm:T1},\ref{itm:T2},\ref{itm:T3}$.
\end{lem}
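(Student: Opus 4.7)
The plan is to produce $\psi$ explicitly as the time-one flow of a cleverly chosen parameterized vector field, verify the submersion condition \ref{itm:T3} first at $(0,v_0)$ via a direct first-variation computation, and then extend to all of $Z\times TM\setminus\zerosection$ by a compactness-and-openness argument leveraging the $\R_+$-homogeneity of $\Psi$ in $v$.

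\textbf{Construction.} Using compactness of $M$ and partitions of unity, I choose finitely many smooth vector fields $X_1,\dotsc,X_N$ on $M$ spanning $T_yM$ at every $y$, and finitely many smooth functions $f_1,\dotsc,f_L\colon M\to\R$ whose differentials span $T_y^*M$ at every $y$. With parameter $z=(z_i,w_{ij})$ in a neighborhood of the origin in $\R^k$, $k=N+NL$, set
\[
   Y_z := \sum_{i=1}^N z_i X_i + \sum_{i,j} w_{ij}\, f_j\, X_i, \qquad \psi_z := \phi^1_{Y_z}.
\]
Condition \ref{itm:T1} holds trivially since $Y_0=0$, and \ref{itm:T2} holds for $z$ in a sufficiently small neighborhood $Z$ of the origin by smooth dependence of the flow on parameters together with compactness of $M$.

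\textbf{Submersion at $(0,v_0)$.} For $v_0\in T_{y_0}M\setminus\{0\}$, the Levi-Civita splitting identifies $T_{v_0}TM$ with $T_{y_0}M\oplus T_{y_0}M$ (horizontal $\oplus$ vertical), and similarly at $w_0=v_0$. Since the second component of $\Psi$ is the projection $v\mapsto v$, the differential $d\Psi(0,v_0)$ is block-triangular with identity block on the $v$-variables, so its surjectivity reduces to surjectivity of
\[
   \partial_z\Psi_1(0,v_0)\colon T_0Z \to T_{v_0}TM = T_{y_0}M\oplus T_{y_0}M.
\]
Using $Y_0=0$, the first variation of the time-one flow yields, for a parameter direction $\delta z$ with induced vector-field variation $\delta Y = \partial_z Y_z|_{z=0}\cdot\delta z$,
\[
   \partial_z\Psi_1(0,v_0)\cdot\delta z \;=\; \bigl(-\delta Y(y_0),\;(\nabla \delta Y(y_0))^*v_0\bigr).
\]
The horizontal part is surjective onto $T_{y_0}M$ since $\{X_i(y_0)\}$ already spans $T_{y_0}M$ via the $z_i$-directions. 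For the vertical part I restrict to variations of the form $\sum w_{ij}f_jX_i$ with the constraint $\sum w_{ij}f_j(y_0)X_i(y_0)=0$ (killing the horizontal component); the vertical contribution then reduces to $\sum w_{ij}\,g(v_0,X_i(y_0))\,df_j(y_0)^\sharp$. Since $v_0\neq0$ and $\{X_i(y_0)\}$ spans $T_{y_0}M$, there exists $i_0$ with $g(v_0,X_{i_0}(y_0))\neq0$; then as $j$ varies, $\{df_j(y_0)^\sharp\}_j$ spans $T_{y_0}M$, giving full vertical surjectivity and hence \ref{itm:T3} at $(0,v_0)$.

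\textbf{Extension to $Z\times TM\setminus\zerosection$.} To globalize I will use two observations. First, $\Psi$ is homogeneous of degree one in $v$, i.e.\ $\Psi(z,\lambda v)=\lambda\Psi(z,v)$ for $\lambda\neq0$, so the rank of $d\Psi$ at $(z,v)$ depends only on $(z,v/\|v\|_g)$, reducing the verification to the compact set $Z\times SM$. Second, surjectivity is an open condition; by the case already handled, $d\Psi$ is surjective on $\{0\}\times SM$, hence by openness and compactness of $SM$ there is a smaller neighborhood $Z'\subset Z$ of $0$ on which $d\Psi$ is surjective over $Z'\times SM$, and by homogeneity over $Z'\times TM\setminus\zerosection$. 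Replacing $Z$ by $Z'$ completes the proof. The main technical point is the adjoint linear-algebra step in the vertical computation: one must check that the ``jet-varying'' terms $f_jX_i$ generate enough of $\mathrm{End}(T_{y_0}M)$ so that, after taking the adjoint and pairing against a nonzero $v_0$, the resulting vectors span $T_{y_0}M$ --- essentially straightforward, but it is what drives the whole verification.
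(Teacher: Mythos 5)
Your construction is correct in spirit and genuinely different from the paper's. Where the paper writes down explicit compactly supported affine perturbations $\sigma_{A,b}(x)=x+\chi(x)(Ax+b)$ in geodesic normal coordinates (so that the surjectivity of $d\Psi$ at the center point reduces to a single explicit linear-algebra computation on matrices), you instead realize $\psi_z$ as a time-one flow of a parameterized vector field $Y_z=\sum z_iX_i+\sum w_{ij}f_jX_i$ and verify the submersion condition via the first variation of the flow. Both approaches finish identically: surjectivity of $d\Psi$ along $\{0\}\times SM$, openness of the surjectivity condition, compactness of $SM$, and fiberwise linearity of $\Psi$ in $v$ to pass from $SM$ to $TM\setminus\zerosection$. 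Your variant is arguably more conceptual, at the cost of a slightly less transparent spanning computation; the paper's is more hands-on but essentially a one-line check in coordinates.

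One step as written does not quite close, though it is readily fixable. You impose the constraint $\sum w_{ij}f_j(y_0)X_i(y_0)=0$ to kill the horizontal component of the variation $\delta Y=\sum w_{ij}f_jX_i$, and then assert the vertical part reduces to $\sum w_{ij}\,g(v_0,X_i(y_0))\,df_j(y_0)^\sharp$. But $\nabla\delta Y(y_0)=\sum w_{ij}\bigl(df_j(y_0)\otimes X_i(y_0)+f_j(y_0)\,\nabla X_i(y_0)\bigr)$, and your constraint does not make $\sum w_{ij}f_j(y_0)\,(\nabla X_i(y_0))^*v_0$ vanish. Moreover, the constraint on $(w_{i_0j})_j$ (a hyperplane) could in principle reduce the span of $\{df_j(y_0)^\sharp\}_j$ below full rank when $L=n$. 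The clean fix: instead of constraining the $w_{ij}$ alone, use the combined variation $\delta Y = f_jX_i - f_j(y_0)X_i$ (a $w_{ij}$ direction plus $-f_j(y_0)$ times a $z_i$ direction). Then $\delta Y(y_0)=0$ and $\nabla\delta Y(y_0)=df_j(y_0)\otimes X_i(y_0)$ exactly, so the vertical contribution is precisely $g(X_i(y_0),v_0)\,df_j(y_0)^\sharp$ with no residual $\nabla X_i$ term and no constraint to worry about. Picking $i_0$ with $g(X_{i_0}(y_0),v_0)\neq 0$ and letting $j$ vary, together with the $z_i$ directions for the horizontal part, then gives the claimed surjectivity.
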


\begin{proof}
  Let $\chi\colon \R^n\to[0,1]$ be a smooth bump function such that
  $\chi|_{B^n(1/2)}\equiv 1$ and $\supp(\chi)\subset B^n(1)$, where
  $B^n(r)\subset\R^n$ denotes the open ball of radius $r$ centered at
  the origin. We define the smooth map
\begin{align*}
\sigma\colon \R^{n\times n}\times\R^n\times\R^n\to\R^n,
\quad
\sigma(A,b,x)=\sigma_{A,b}(x)=x+\chi(x)\big( Ax+b \big),
\end{align*}
where $A\in\R^{n\times n}$ is treated as an $n\times n$ matrix.
Notice that, for all $(A,b)$ in a sufficiently small neighborhood
$U\subset\R^{n\times n}\times\R^n$ of the origin, the following three
conditions hold:
\begin{itemize}
\setlength\itemsep{2pt}
\item $\sigma_{A,b}\colon\R^n\to\R^n$ is a diffeomorphism, 

\item $\|\sigma_{A,b}^{-1}(0)\|<1/2$, 

\item $\sigma_{A,b}(B^n(1))=B^n(1)$.

\end{itemize}
For each non-zero vector $v\in\R^n\setminus\{0\}$, the map
$U\to\R^n\times\R^n$ given by
\begin{align}
\label{e:Sigma_construction_tomograph}
  (A,b)\mapsto(d\sigma_{A,b}(\sigma_{A,b}^{-1}(0))^Tv,
  \sigma_{A,b}^{-1}(0))=((I+A)v,-(I+A)^{-1}b)
\end{align}
has surjective differential at the origin $0\in U$. Here, the
superscript $^T$ denotes the transpose matrix, i.e., the adjoint with
respect to the Euclidean metric.

Let $(M,g)$ be a closed Riemannian manifold of dimension $n\geq
2$. For each point $x_0\in M$, 
take a chart
$\phi_{x_0}\colon W_{x_0}\to\R^n$ that provides geodesic normal
coordinates centered at $x_0$. Namely, $W_{x_0}\subset M$ is an open
neighborhood of $x_0$, $\phi_{x_0}(x_0)=0$, and, if we denote by
$(x_1,\ldots ,x_n)=\phi_{x_0}(x)$ the coordinates associated with the
chart, the Riemannian metric $g$ can be written as
\begin{align*}
 g = \sum_{i,j=1}^n g_{ij} dx_i\otimes dx_j,
\end{align*}
and its coefficients satisfy $g_{ij}(0)=\epsilon_{ij}$ and
$dg_{ij}(0)=0$. Up to modifying this chart outside a neighborhood of
$x_0$, we can assume that the image $\phi_{x_0}(W_{x_0})$ contains the
unit ball $B^n(1)$. Consider the smooth map
$\theta_{x_0}\colon U\times M\to M$ given by
\begin{align*}
 \theta_{x_0}(A,b,x)= \theta_{x_0,A,b}(x) &:= 
 \left\{
  \begin{array}{@{}ll}
    x & \mbox{if }x\in M\setminus W_{x_0}, \vspace{2pt} \\ 
    \phi_{x_0}^{-1}\circ\sigma_{A,b}\circ\phi_{x_0}(x)
       & \mbox{if }x\in W_{x_0}. 
  \end{array} 
 \right.
\end{align*}
Notice that each $\theta_{x_0,A,b}\colon M\to M$ is a
diffeomorphism. We define the lift
\begin{align*}
 \Theta_{x_0}\colon U\times TM\to TM\times TM,
 \qquad
  \Theta_{x_0}(A,b,v)
  = (d\theta_{x_0,A,b}(\theta_{x_0,A,b}^{-1}(\pi(v)))^*v,v),
\end{align*}
where $\pi\colon TM\to M$ is the base projection of the tangent
bundle.  By the surjectivity of the differential of the above
map~\eqref{e:Sigma_construction_tomograph} at the origin, we readily
see that, for all unit tangent vectors $v\in S_{x_0}M$, the
differential $d\Theta_{x_0}(0,0,v)$ is surjective. Since the
surjectivity of the differential is an open condition, there exists an
open neighborhood $V_{x_0}\subset M$ of $x_0$ such that, for each
$x\in V_{x_0}$ and $v\in S_xV_{x_0}$, the differential
$d\Theta_{x_0}(0,0,v)$ is surjective.

By the compactness of $M$, there exist finitely many points
$x_1,\ldots ,x_h\in M$ such that $V_{x_1}\cup\ldots\cup V_{x_h}=M$. We
define the map $\psi\colon U^{\times h}\times M\to M$ by
\begin{gather*}
\psi(\underbrace{A_1,b_1,\ldots ,A_h,b_h}_z,x)
=
\psi_z(x)
=
\theta_{x_1,A_1,b_1}\circ\ldots\circ\theta_{x_h,A_h,b_h}(x).
\end{gather*}
By construction, we have $\psi_0(x)=x$, and each $\psi_z\colon M\to M$
is a diffeomorphism.  The associated map
$\Psi\colon U^{\times h}\times TM\to TM\times TM$ given by
\begin{align*}
\Psi(z,v)=\Psi_z(v)=(d\psi_z(\psi_z^{-1}(x))^*v,v),
\qquad
\forall z\in U^{\times h},\ v\in T_xM
\end{align*}
has surjective differential at every point of the form
$(0,v)\in U^{\times h}\times SM$. Using one last time the fact that
the surjectivity of the differential is an open condition, we find a
sufficiently small open neighborhood $Z\subset U^{\times h}$ of the
origin such that $d\Psi(z,v)$ is surjective for all
$(z,v)\in Z\times SM$. Finally, since $v\mapsto \Psi(z,v)$ is linear
along the fibers of $TM$, we conclude that $d\Psi(z,v)$ is surjective
for all $(z,v)\in Z\times TM\setminus\zerosection$.
\end{proof}

\subsection{Variational principles associated with a
  tomograph}\label{ss:var_principle}
Fix a constant 
\begin{align*}
\epsilon\in(0,\tfrac12\injrad(g)) > 0.
\end{align*}
Using the notation from Section \ref{ss:tomographs}, up to possibly
shrinking the open neighborhood $Z\subset\R^k$ around the origin, we
can assume that the tomograph $\psi$ satisfies 
\begin{align}
\label{e:displacement_tomograph}
  \sup\big\{d(x,\psi_z(x))\ \big|\
  (z,x)\in Z\times M\big\}<\epsilon,
\end{align}
where $d\colon M\times M\to[0,\infty)$ denotes the Riemannian
distance.

We now employ $\psi$ in connection with a twisted version of the
variational principle for closed geodesics, introduced by Grove
\cite{Grove:1973aa}. For each $z\in Z$, consider the path space
\begin{align*}
  \Omega_z :=\big\{ \gamma\in\W([0,1],M)\ \big|\
  \gamma(1)=\psi_z(\gamma(0)) \big\},
\end{align*}
and denote the energy functional over it by 
\[
  \EE_z\colon \Omega_z \to[0,\infty), \qquad \EE_z(\gamma)=\int_0^1
  \|\dot\gamma\|_g^2\,dt.
\]
Notice that $\Omega_0=\Lambda$ is the usual free loop space, and
$\EE_0=\EE$ is the usual energy functional of 1-periodic curves. The
critical set $\crit^+(\EE_z):=\crit(\EE_z)\cap \EE_z^{-1}(0,\infty)$
consists of those geodesic segments $\gamma\colon [0,1]\to M$ such
that $\gamma(1)=\psi_z(\gamma(0))$ and
$d\psi_z(\gamma(0))^*\dot\gamma(1)=\dot\gamma(0)$.  As usual, we
consider the critical values of the square root of the energy
$\sqrt{\EE_z}$.
For each subset $\UU\subseteq\Omega_z$ and real number
$c>0$, we employ the usual notation $\UU^{<c}:=\UU\cap\EE_z^{-1}[0,c^2)$.
For each interval $[a,b]\subset(0,\infty)$, we set
\[\crit(\EE_z)^{[a,b]}:=\crit(\EE_z)\cap\EE_z^{-1}[a^2,b^2].\]
Notice that, by~\eqref{e:displacement_tomograph} and since
$\epsilon<\tfrac12\injrad(g)$, the interval
$[\epsilon,2\injrad(g)-\epsilon]$ does not contain critical values
of $\sqrt{\EE_z}$.

Conditions \ref{itm:T1} and \ref{itm:T2} imply the following
statement.

\begin{lem}
\label{l:sigma_nu_maps}
There exists continuous maps $\sigma\colon\Lambda\to\Omega_z $ and
$\nu\colon \Omega_z \to\Lambda$ such that
\begin{equation}
\label{e:energy_sigma_nu}
\begin{split}
\sqrt{\EE_z(\sigma(\gamma))}&<\sqrt{\EE(\gamma)}+\epsilon,
\qquad
\forall\gamma\in\Lambda,\\
\sqrt{\EE(\nu(\gamma))}&<\sqrt{\EE_z(\gamma)}+\epsilon,
\qquad
\forall\gamma\in\Omega_z .
\end{split} 
\end{equation}
Moreover, there exists a continuous homotopy
$h_s\colon\Lambda\to\Lambda$ such that $h_0=\id$,
$h_1=\nu\circ\sigma$, and
\begin{align*}
 \sqrt{\EE(h_s(\gamma))}<\sqrt{\EE(\gamma)}+2\epsilon,
 \qquad
 \forall\gamma\in\Lambda,\ s\in[0,1].
\end{align*}
Similarly, there exists a continuous homotopy
$k_s\colon\Omega_z \to\Omega_z $ such that $k_0=\id$,
$k_1=\sigma\circ\nu$, and
\begin{align*}
 \sqrt{\EE_z(k_s(\gamma))}<\sqrt{\EE_z(\gamma)}+2\epsilon,
 \qquad
 \forall\gamma\in\Omega_z ,\ s\in[0,1].
\end{align*}
\end{lem}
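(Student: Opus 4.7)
The plan is to construct $\sigma(\gamma)\in\Omega_z$ and $\nu(\gamma)\in\Lambda$ by attaching a short geodesic arc that bridges the gap between $\gamma(0)$ and $\psi_z(\gamma(0))$. By~\eqref{e:displacement_tomograph} and $\epsilon<\tfrac12\injrad(g)$, for every $x\in M$ the minimizing geodesic from $x$ to $\psi_z(x)$ is unique and depends smoothly on $x$ and $z$; denote by $\eta_x\colon[0,1]\to M$ its constant-speed parametrization.

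For $\sigma$: given $\gamma\in\Lambda$, set $L:=\sqrt{\EE(\gamma)}$ and $d:=d(\gamma(0),\psi_z(\gamma(0)))<\epsilon$, and choose a weight $\tau=\tau(\gamma)\in(0,1)$ depending continuously on $\gamma$. Define
\[
  \sigma(\gamma)(t):=
  \begin{cases}
    \gamma(t/\tau), & t\in[0,\tau], \\
    \eta_{\gamma(0)}((t-\tau)/(1-\tau)), & t\in[\tau,1].
  \end{cases}
\]
Then $\sigma(\gamma)(1)=\psi_z(\sigma(\gamma)(0))$, so $\sigma(\gamma)\in\Omega_z$, and a direct calculation gives $\EE_z(\sigma(\gamma))=L^2/\tau+d^2/(1-\tau)$; the minimizer $\tau:=L/(L+d)$ yields $\EE_z(\sigma(\gamma))=(L+d)^2$, hence the sharp bound $\sqrt{\EE_z(\sigma(\gamma))}=L+d<L+\epsilon$. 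The map $\nu$ is defined symmetrically for $\gamma\in\Omega_z$ by appending the reverse constant-speed geodesic from $\psi_z(\gamma(0))=\gamma(1)$ back to $\gamma(0)$, which gives the second estimate in~\eqref{e:energy_sigma_nu}.

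For the homotopies I interpolate the tomograph. After shrinking $Z$ to a star-shaped neighborhood of the origin, set $\psi_{z,s}:=\psi_{sz}$, so that $\psi_{z,0}=\id$ and $\psi_{z,1}=\psi_z$ with uniform displacement bound $d(x,\psi_{z,s}(x))<\epsilon$ for all $s\in[0,1]$. Using $\psi_{z,s}$ in place of $\psi_z$, the three-segment construction — reparametrized $\gamma$, bridge from $\gamma(0)$ to $\psi_{sz}(\gamma(0))$, reverse bridge back, with weights chosen analogously to the second paragraph — defines a continuous family $h_s\colon\Lambda\to\Lambda$. At $s=0$ the bridges are trivial and the weights force $h_0=\id$, while at $s=1$ one recovers $\nu\circ\sigma$. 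The same computation applied twice in sequence (once for $\sigma_s$, once for $\nu_s$) gives $\EE(h_s(\gamma))\le(L+2sd)^2$, hence $\sqrt{\EE(h_s(\gamma))}<L+2\epsilon$. The homotopy $k_s\colon\Omega_z\to\Omega_z$ is constructed symmetrically, interpolating between $\id$ and $\sigma\circ\nu$.

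The main technical obstacle will be verifying continuity of $\sigma$, $\nu$, and the homotopies in the $W^{1,2}$-topology, particularly on or near constant loops, where $L\to 0$ and the optimal weight $\tau=L/(L+d)$ approaches the boundary of $(0,1)$. This is resolved by observing that the energy contribution of the first segment of $\sigma(\gamma)$ equals $L^2/\tau=L(L+d)$, which vanishes as $L\to 0$, while the second segment converges uniformly with bounded energy to the full geodesic arc $\eta_{\gamma(0)}$; if smoother behaviour is needed one replaces $\tau$ by a smoothed function of $L$ and $d$ bounded away from $0$ and $1$ on $W^{1,2}$-bounded subsets of $\Lambda$, paying only a small additional cost that is absorbed by the strict inequality $d<\epsilon$.
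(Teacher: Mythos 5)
Your construction is in the same spirit as the paper's---attach a short geodesic bridge between $\gamma(0)$ and $\psi_z(\gamma(0))$ and reparametrize---but the two implementations differ in a way that matters for rigor. The paper concatenates $\gamma$ with the bridge and then applies the \emph{constant-speed} reparametrization map $\gamma\mapsto\overline\gamma$ to the whole concatenation; that gives $\sqrt{\EE_z(\sigma(\gamma))}=\mathrm{length}(\gamma)+d\le\sqrt{\EE(\gamma)}+d$ exactly, and the only subtlety, the $W^{1,2}$-continuity of $\gamma\mapsto\overline\gamma$, is dispatched by citing Anosov. Your two-piece dilation with optimal weight $\tau=L/(L+d)$ achieves the same numerical bound $L+d$, but it pushes the continuity problem into the weight: $\tau$ is undefined (or degenerates to $0$) on the constant loops, precisely where the paper's reparametrization has its own delicacy. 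So the two routes trade one continuity subtlety for another; neither is free.

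The concrete gap is in your fallback fix. Taking $\tau$ ``bounded away from $0$ and $1$'' would destroy the estimate: in the limit $L\to 0$ you need $d^2/(1-\tau)<\epsilon^2$, i.e.\ $\tau<1-(d/\epsilon)^2$, and since $d$ may be arbitrarily close to $\epsilon$ there is no uniform positive lower bound for $\tau$ compatible with the energy bound. Moreover the restriction to ``$W^{1,2}$-bounded subsets'' does not produce a globally defined continuous map on $\Lambda$. Your primary observation---that the first segment's energy $L(L+d)$ and its $C^0$-diameter both vanish as $L\to 0$, while the second segment converges to $\eta_{\gamma(0)}$---is the right one and can be turned into a genuine $W^{1,2}$ continuity argument for the exact formula $\tau=L/(L+d)$ (handling $L=d=0$ separately by setting $\sigma(\gamma)=\gamma$); but as written that argument is asserted rather than given, and it is the crux of the proof. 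On the homotopy side, interpolating via $\psi_{sz}$ is a legitimate alternative to the paper's $\zeta_s$ (which slides along the bridge geodesic and needs no star-shapedness of $Z$); the paper further splits the homotopy into a reparametrization homotopy $u_s$ followed by a bridge-shrinking homotopy $v_s$ with explicit energy control, whereas your description of the $s$-dependent weights in the three-segment interpolation is left implicit and would need the same care about degenerating weights as $\sigma$ itself.
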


\begin{proof}
  For each $x,y\in M$ with $d(x,y)<\injrad(g)$, we denote by
  $\alpha_{x,y}$ the unit-speed geodesic segment joining $x$ and $y$.
  If $\gamma_1\colon [0,\tau_1]\to M$ and
  $\gamma_2\colon [0,\tau_2]\to M$ are continuous paths such that
  $\gamma_1(\tau_1)=\gamma_2(0)$, we denote their concatenation by
\[
\gamma_1*\gamma_2\colon [0,\tau_1+\tau_2]\to M,
\qquad
\gamma_1*\gamma_2(t)
=
\left\{
  \begin{array}{@{}ll}
    \gamma_1(t), & \mbox{if }t\in[0,\tau_1], \\ 
    \gamma_1(t-\tau_1), & \mbox{if }t\in[\tau_1,\tau_1+\tau_2]. 
  \end{array}
\right.
\]
For each $\W$-path $\gamma\colon [0,\tau]\to M$, let
$\overline\gamma\colon [0,1]\to M$ be the same path reparametrized
with constant speed on the unit interval, so that
\begin{align*}
  \int_0^t \|\dot{\overline\gamma}(r)\|_g\,dr=t \int_0^\tau \|
  \dot{\gamma}(r)\|_g\,dr,\qquad\forall t\in[0,1].
\end{align*}
Notice that $\sqrt{\EE(\overline\gamma)}$ is the length of the
original $\gamma$. Moreover, if $\gamma\colon [0,1]\to M$ is already
defined on the unit interval, the H\"older inequality implies that
$\EE(\overline\gamma)\leq\EE(\gamma)$. The continuous map\footnote{The
  continuity of the map $u$, as well as the continuity of the homotopy
  $u_s$, is intuitive but perhaps not obvious. A full proof was
  provided in \cite[Theorem~2]{Anosov:1980aa} by Anosov himself!}
$u\colon\Lambda\to\Lambda$, $u(\gamma)=\overline\gamma$ is homotopic
to the identity via the continuous homotopy
$u_s\colon\Lambda\to\Lambda$ defined as follows: for each
$\gamma\in\Lambda$ and $s\in[0,1]$, the curve $\gamma_s:=u_s(\gamma)$
satisfies $\gamma_s|_{[s,1]}=\gamma|_{[s,1]}$, whereas
$\gamma_s|_{[0,s]}$ is the constant speed reparametrization of
$\gamma|_{[0,s]}$. This homotopy preserves the sublevel sets of $\EE$,
for
\begin{align*}
 \EE(\gamma_s)
 &=
 \int_0^s \|\dot\gamma_s\|^2_g\, dt + \int_s^1 \|\dot\gamma_s\|^2_g\, dt 
 =
   \frac 1s \left(\int_0^s \|
   \dot\gamma_s\|_g\, dt\right)^2 + \int_s^1 \|\dot\gamma_s\|^2_g\, dt\\
 &=
   \frac 1s \left(\int_0^s \|\dot\gamma\|_g\, dt\right)^2 +
   \int_s^1 \|\dot\gamma\|^2_g\, dt
 \leq
   \int_0^s \|\dot\gamma\|^2_g\, dt + \int_s^1 \|
   \dot\gamma\|^2_g\, dt 
 = \EE(\gamma).
\end{align*}

Let $z\in Z$, and define the maps $\sigma\colon\Lambda\to\Omega_z$ and
$\nu\colon\Omega_z \to\Lambda$ by
\begin{align*}
  \sigma(\gamma) =
  \overline{\gamma*\alpha_{\gamma(1),\psi_z(\gamma(1))}},
  \qquad
  \nu(\gamma) =
  \overline{\gamma*\alpha_{\gamma(1),\psi_z^{-1}(\gamma(1))}}.
\end{align*}
Clearly, these maps are continuous and satisfy the energy
bounds~\eqref{e:energy_sigma_nu}. Observe that for the continuous
homotopy $\zeta_s\colon M\to M$ from $\zeta_0=\id$ to $\zeta_1=\psi_z$
given by
\[
  \zeta_s(x)=\alpha_{x,\psi_z(x)}(s\,d(x,\psi_z(x))) \quad
  \textrm{where} \quad 0 \le s \le 1,
\]
$d(x,\zeta_s(x))=s\,d(x,\psi_z(x))<\epsilon$ for all $x\in M$ and
$s\in[0,1]$.  Next, employing $\zeta_s$, we define another continuous
homotopy $v_s\colon\Lambda\to\Lambda$ by
\begin{align*}
  v_s(\gamma) =
  \overline{
  \gamma*\alpha_{\gamma(1),\zeta_s(\gamma(1))}*
  \alpha_{\zeta_s(\gamma(1)),\gamma(1)}}.
\end{align*}
This homotopy satisfies the properties $v_0(\gamma)=\overline\gamma$,
$v_1(\gamma)=\nu\circ\sigma(\gamma)$ and
\begin{align*}
  \sqrt{\EE(v_s(\gamma))} = \sqrt{\EE(\overline\gamma)}  +
  2s\,d(\gamma(1),\psi_z(\gamma(1)))< \sqrt{\EE(\gamma)} + 2\epsilon.
\end{align*}
Finally, we construct our desired continuous homotopy
$h_s\colon\Lambda\to\Lambda$ by juxtaposition of the homotopies $u_s$
and $v_s$, i.e.,
\begin{align*}
h_s
:=
\left\{
  \begin{array}{@{}ll}
    u_{2s}, & \mbox{if }s\in[0,1/2], \\ 
    v_{2s-1} & \mbox{if }s\in[1/2,1]. 
  \end{array}
\right.
\end{align*}
The construction of the other continuous homotopy
$k_s\colon\Omega_z\to\Omega_z$ is analogous.
\end{proof}

As in Section~\ref{ss:main_results}, we denote by $b_{\epsilon,c}$ the
number of bars of size at least $\epsilon$ and intersecting the
interval $(0,c]$ in the closed geodesics barcode $\B=\B(g;\F)$, for
a fixed coefficient field $\F$ which we suppress in the notation.

\begin{lem}
\label{l:stability_barcode}
For each $z\in Z$ such that all the critical points in
 $\crit^+(\EE_z)$ are non-degenerate, we have
\[
\#\crit(\EE_z)^{[\epsilon,c+\epsilon]}
\geq 
b_{2\epsilon,c},
\qquad
\forall c>\epsilon.
\]
\end{lem}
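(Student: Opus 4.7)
The plan is to apply the stability (isometry) theorem for barcodes to an $\epsilon$-interleaving between the persistence modules of $\EE$ on $\Lambda$ and of $\EE_z$ on $\Omega_z$, and then to translate matched bars into distinct critical points of $\EE_z$ using non-degeneracy.

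First, I would exploit Lemma~\ref{l:sigma_nu_maps} directly. The energy bounds~\eqref{e:energy_sigma_nu} show that $\sigma$ restricts to continuous maps $\Lambda^{<c}\to\Omega_z^{<c+\epsilon}$, and $\nu$ to $\Omega_z^{<c}\to\Lambda^{<c+\epsilon}$. Since $\epsilon<\tfrac12\injrad(g)$, a short-geodesic deformation shows that both $\Lambda^{<\delta}$ and $\Omega_z^{<\delta}$ deformation retract onto $M$ (the latter via the unique short geodesic from $x$ to $\psi_z(x)$, using~\eqref{e:displacement_tomograph}) for any $\delta<2\injrad(g)-\epsilon$; this provides a common ``ground floor'' for both persistence modules. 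Passing to relative singular homology with $\F$-coefficients, the maps $\sigma_*$ and $\nu_*$ define morphisms of persistence modules. The homotopies $h_s$ and $k_s$ take values within the $2\epsilon$-shifted sublevels, so the compositions $\nu_*\circ\sigma_*$ and $\sigma_*\circ\nu_*$ coincide with the inclusion-induced $2\epsilon$-shift morphisms, exhibiting a strict $\epsilon$-interleaving.

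Next, I would invoke the isometry theorem (see~\cite{Polterovich:2020aa}) for q-tame persistence modules (cf.\ Lemma~\ref{l:tame}), concluding that the bottleneck distance between $\B=\B(g;\F)$ and the barcode $\B_z$ of $\EE_z$ is at most $\epsilon$. In particular, there exists a partial matching between $\B$ and $\B_z$ in which every bar of length at least $2\epsilon$ is matched, and matched pairs $([a,b),[a',b'))$ satisfy $|a-a'|\leq\epsilon$ and $|b-b'|\leq\epsilon$.

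Now count. For each of the $b_{2\epsilon,c}$ bars $[a,b)\in\B$ with $a\leq c$ and $b-a\geq2\epsilon$, let $[a',b')\in\B_z$ be its match. Then $a'\leq a+\epsilon\leq c+\epsilon$. Moreover, the acyclicity $H_*(\Lambda^{<r},M)=0$ for $r<2\injrad(g)$ forces every birth value in $\B$ to satisfy $a\geq 2\injrad(g)>4\epsilon$, so $a'\geq a-\epsilon>3\epsilon\geq\epsilon$. Thus the $b_{2\epsilon,c}$ matched bars produce, in the multiset sense, distinct birth values $a'\in[\epsilon,c+\epsilon]$. Finally, under the non-degeneracy hypothesis on $\crit^+(\EE_z)$, standard Morse theory for the Palais--Smale functional $\EE_z$ identifies each bar of $\B_z$ with a unique non-degenerate birth critical point of $\EE_z$; distinct bars correspond to distinct critical points. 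Therefore $\#\crit(\EE_z)^{[\epsilon,c+\epsilon]}\geq b_{2\epsilon,c}$, as claimed.

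The main obstacle is to set up the two persistence modules on $\Lambda$ and $\Omega_z$ with compatible normalizations at the low-energy end so that the interleaving is genuinely strict, and to verify the standard Morse-theoretic bijection between bars and birth critical points in the infinite-dimensional Hilbert manifold $\Omega_z$ (which is routine but requires care, since for $z\neq 0$ the space $\Omega_z$ does not contain the constant loops).
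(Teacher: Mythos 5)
Your proof is correct and follows essentially the same strategy as the paper's: use the maps $\sigma,\nu$ and homotopies from Lemma~\ref{l:sigma_nu_maps} to exhibit an $\epsilon$-interleaving of the two persistence modules, invoke the isometry theorem to obtain an $\epsilon$-matching, and then count the matched bars against non-degenerate critical points of $\EE_z$ via the Morse-theoretic bijection between bar endpoints and critical points. The only cosmetic differences are that the paper normalizes the two persistence modules relative to $\Lambda^0$ and $\Omega_z^{<\epsilon}$ (using that $\sqrt{\EE_z}$ has no critical values in $[\epsilon,3\epsilon]$) rather than via a deformation retraction onto $M$, and obtains the lower bound on matched birth values directly from that normalization rather than from $a\geq 2\injrad(g)$ for bars of $\B$; both routes are equivalent.
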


\begin{proof}
  Recall from Section~\ref{ss:barcode} that $\B$ is the barcode
  associated with the persistence module
  $(H_b:=H_*(\Lambda^{<b},\Lambda^0),i_{b,a})_{b>a>0}$, where the maps
  $i_{b,a}\colon H_a\to H_b$ are the homomorphisms induced by the
  inclusion. Pick $z\in Z$ such that all the critical points in
  $\crit^+(\EE_z)$ are non-degenerate. Consider the analogous
  persistence module
  $(K_b:=H_*(\Omega_z^{<b},\Omega_z^{<\epsilon}),j_{b,a})_{b>a>\epsilon}$,
  where $j_{b,a}\colon K_a\to K_b$ are again the homomorphisms induced
  by the inclusion and let $\CC$ be the barcode associated with
  $(K_b,j_{b,a})_{b>a>\epsilon}$.

  Since we chose $\epsilon<\tfrac12\injrad(g)$, the functional
  $\sqrt{\EE}$ does not have critical values in $(0,2\epsilon]$ and
  $\sqrt{\EE_z}$ does not have critical values in
  $[\epsilon,3\epsilon]$. Therefore, the inclusions
  $\Lambda^0\hookrightarrow\Lambda^{<2\epsilon}$ and
  $\Omega_z^{<\epsilon} \hookrightarrow\Omega_z^{<3\epsilon}$ are
  homotopy equivalences and induce isomorphisms
\begin{align}
\label{e:Hc_2delta_isomorphism}
  H_c=H_*(\Lambda^{<c},\Lambda^0) &\toup^{\cong}
  H_*(\Lambda^{<c},\Lambda^{<2\epsilon}),\\
\label{e:Kc_3delta_isomorphism}
  K_c=H_*(\Omega_z^{<c},\Omega_z^{<\epsilon})
     &\toup^{\cong}
  H_*(\Omega_z^{<c},\Omega_z^{<3\epsilon}).
\end{align}
The maps $\sigma\colon\Lambda\to\Omega_z$ and
$\nu\colon\Omega_z\to\Lambda$ from Lemma~\ref{l:sigma_nu_maps} induce
homomorphisms
\[\sigma_* \colon H_c\to K_{c+\epsilon},
\qquad\nu_*\colon K_{c}\to H_*(\Lambda^{<c},\Lambda^{<2\epsilon})
\]
by the first and, respectively, the second energy bound
in~\eqref{e:energy_sigma_nu}. Composing the latter homomorphisms with
the inverse of the isomorphisms~\eqref{e:Hc_2delta_isomorphism}, we
obtain homomorphisms
\begin{align*}
 \nu_*\colon K_{c}\to H_{c+\epsilon},
\end{align*}
which we still denote by $\nu_*$ with a slight abuse of notation. The
homotopies provided by Lemma~\ref{l:sigma_nu_maps}, together with the
isomorphisms \eqref{e:Hc_2delta_isomorphism} and
\eqref{e:Kc_3delta_isomorphism} induced by the inclusion, imply that
we have commutative diagrams
\[
\begin{tikzcd}[row sep=large]
H_c
\arrow[dr,"\sigma_*"]
\arrow[rr,"i_{c+2\epsilon,c}"]
&
&
H_{c+2\epsilon}
\arrow[dr,"\sigma_*"]
\\
&
K_{c+\epsilon}
\arrow[ur,"\nu_*"]
\arrow[rr,"j_{c+3\epsilon,c+\epsilon}"]
&
&
K_{c+3\epsilon}
\end{tikzcd}
\]

In the language of persistence modules, these commutative diagrams
mean precisely that $(H_b,i_{b,a})_{b>a>0}$ and
$(K_b,j_{b,a})_{b>a>\epsilon}$ are \emph{$\epsilon$-interleaved}. This
property allows us to invoke the isometry theorem from the theory of
persistence modules \cite[Theorem 5.14]{Chazal:2016aa}, which provides
a so called \emph{$\epsilon$-matching} between the barcodes $\B$ and
$\CC$. Namely, consider each barcode as a collection of intervals
(bars) in which every interval is allowed to be repeated according to
its multiplicity. For each $\delta\geq0$, we denote by
$\B_\delta\subset\B$ and $\CC_{\delta}\subset \CC$ the respective
subcollections of bars $[a,b)$ of size $b-a\geq\delta$. The
$\epsilon$-matching is a bijection $f\colon\B'\to\CC'$ from
subcollections $\B'\subset\B$ and $\CC'\subset\CC$ such that
$\B_{2\epsilon}\subset\B'$, $\CC_{2\epsilon}\subset\CC'$, and if
$f([a,b))=[c,d)$ then $[a,b)\subset[c-\epsilon,d+\epsilon)$ and
$[c,d)\subset[a-\epsilon,b+\epsilon)$. For each $\delta\geq0$ and
$c>\epsilon$, we define $\B_{\delta,c}\subset\B_{\delta}$ and
$\CC_{\delta,c}\subset\CC_{\delta}$ as the respective subcollections
of bars $[a,b)$ such that $a\leq c$. Notice that,
with the notation recalled just before the statement of the lemma,
$b_{\delta,c}$ is the cardinality of $\B_{\delta,c}$.

Since all critical points in $\crit^+(\EE_z)$ are non-degenerate, for
each compact interval $[a,b]\subset(0,\infty)$ the set of critical
points $\crit(\EE_z)^{[a,b]}$ is finite. Let $c>0$ be a critical value
of $\sqrt{\EE_z}$ and fix $\delta>0$ so small that $\sqrt{\EE_z}$ has
no critical values in the interval $(c,c+\delta]=\varnothing$. Morse
theory implies that
\begin{align*}
 \dim H_*(\Omega_z^{<c+\delta},\Omega_z^{<c})
 =
 \#\big(\crit(\EE_z)\cap\EE_z^{-1}(c^2)\big).
\end{align*}
Consider the homomorphism $j_{c+\delta,c}\colon K_c\to
K_{c+\delta}$. Notice that $\dim \ker(j_{c+\delta,c})$ is the number
of bars in $\CC$ of the form $[a,c)$ for some $a<c$, whereas
$\dim \mathrm{coker}(j_{c+\delta,c})$ is the number of bars in $\CC$
of the form $[c,b)$ for some $b>c$. The homology exact triangle
associated with the inclusion $K_c\subset K_{c+\delta}$
\[
\begin{tikzcd}[row sep=large]
K_c
\arrow[r,"j_{c+\delta,c}"]
&
K_{c+\delta}
\arrow[d]
\\
&
H_*(\Omega_z^{<c+\delta},\Omega_z^{<c})
\arrow[ul,"\partial_*"]
\end{tikzcd}
\]
implies that 
\[H_*(\Omega_z^{<c+\delta},\Omega_z^{<c})\cong
  \ker(j_{c+\delta,c})\oplus\mathrm{coker}(j_{c+\delta,c}),\] and,
therefore, every critical point in $\crit(\EE_z)\cap \EE_z^{-1}(c^2)$
is the endpoint of exactly one bar in $\CC$.

By the conclusion of the previous paragraph, for each $c>\epsilon$ the
cardinality $\CC_{0,c}$ is bounded from above by the cardinality of
$\crit(\EE_z)^{[\epsilon,c]}$. The $\epsilon$-matching $f$ readily
implies that $f(\B_{2\epsilon,c})\subset\CC_{0,c+\epsilon}$. Thus we
conclude that
\[
 b_{2\epsilon,c}
 =
 \#\B_{2\epsilon,c}
 =
 \#f(\B_{2\epsilon,c})
 \leq
 \#\CC_{0,c+\epsilon}
 \leq
 \#\crit(\EE_z)^{[\epsilon,c+\epsilon]}.
\qedhere
\]
\end{proof}

\subsection{Lagrangian intersections}

We momentarily consider the geodesic flow on the whole tangent bundle,
$\phi_t\colon TM\to TM$, $\phi_t(\dot\gamma(0))=\dot\gamma(t)$, where
$\gamma\colon \R\to M$ is a geodesic or a constant curve, and we lift
it to an embedding
\begin{align}
\label{e:Phi}
 \Phi_t\colon TM\hookrightarrow TM\times TM,
 \qquad
 \Phi_t(v)=(v,\phi_t(v)).
\end{align}
Let as above $\Psi\colon Z\times TM\to TM\times TM$ be the associated
tomograph map, which we treat as a family of maps
$\Psi_z=\Psi(z,\cdot)\colon TM\to TM\times TM$ parametrized by
$z\in Z$. There is a one-to-one correspondence
\begin{equation}
\begin{split}
\label{e:one_to_one_TM}
 \crit(\EE_z)& \ \bijection^{1:1}\ \Psi_z(TM)\cap\Phi_1(TM)\\
 \gamma&  \ \bijection\ (\dot\gamma(0),\dot\gamma(1)).
\end{split} 
\end{equation}
A straightforward calculation shows that a critical point
$\gamma\in\crit(\EE_z)$ is non-degenerate if and only if
$(\dot\gamma(0),\dot\gamma(1))$ is a transverse intersection point of
$\Psi_z(TM)$ and $\Phi_1(TM)$. Notice that, by condition \ref{itm:T3}
and the parametric transversality theorem,
$\Psi_z(TM\!\setminus\!\zerosection)\pitchfork\Phi_1(TM)$ for almost
all $z\in Z$. Therefore, for almost all $z\in Z$, all critical points
of the energy functional $\EE_z$ with positive critical values are
non-degenerate.

Since $\phi_t(c v)=c\, \phi_{ct}(v)$ for each $c>0$, the one-to-one
correspondence~\eqref{e:one_to_one_TM} can be rewritten as the
following family of one-to-one correspondences for each $c>0$:
\begin{align*}
  \crit(\EE_z)\cap\EE_z^{-1}(c^2)
  & \ \bijection^{1:1}\ \Psi_z(SM)\cap\Phi_c(SM)\\
  \gamma
  &  \ \bijection\ \tfrac1c(\dot\gamma(0),\dot\gamma(1)).
\end{align*}

Let us equip the tangent bundle $TM$ with the Sasaki Riemannian metric
$\tilde g$ induced by the Riemannian metric $g$ on $M$. This metric,
in turn, gives rise to the product Riemannian metric on $TM\times TM$
and on all its submanifolds and, in particular, on $\Phi_t(SM)$.  We
denote by $\vol(\Phi_t(SM))$ the volume obtained by integrating the
Riemannian density. Fix an open neighborhood $B\subset\R^k$ of the
origin such that $\overline B\subset Z$. Next we need the following
``Crofton inequality'' (cf.~\cite[Lemma 5.3]{Cineli:2021aa}), which is
reminiscent of the classical Crofton formula.

\begin{lem}
\label{l:Crofton_inequality}
There exists a constant $C>0$ such that, for each compact interval
$[c_0,c_1]\subset(0,\infty)$ with non-empty interior,
\begin{align*}
 \int_{B} \# \crit(\EE_z)^{[c_0,c_1]}\,dz 
 \leq 
 C
 \int_{c_0}^{c_1} \vol(\Phi_t(SM))\,dt.
\end{align*} 
\end{lem}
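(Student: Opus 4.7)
The plan is to adapt the standard integral-geometric (Crofton) argument, realizing the critical-point count as the fiber count of a smooth projection and applying the coarea formula twice. Via the bijection \eqref{e:one_to_one_TM} (restricted to energies in $[c_0^2,c_1^2]$), the set $\crit(\EE_z)^{[c_0,c_1]}$ corresponds bijectively to the $(v,c)\in SM\times[c_0,c_1]$ with $d\psi_z(\psi_z^{-1}(\pi(v)))^{*}v=\phi_{-c}(v)$. Consider the restriction
\[
\tilde\Theta\colon B\times SM\to TM\times SM,\qquad
\tilde\Theta(z,v)=(d\psi_z(\psi_z^{-1}(\pi(v)))^{*}v,\,v),
\]
of $\Psi$ to $B\times SM$, viewed as landing in $TM\times SM$. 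By condition \ref{itm:T3} (after enlarging $Z$ so that $k\geq 2n$ if necessary) this is a submersion: its second component is tautological, and condition \ref{itm:T3} forces the $z$-variations alone to cover the first. Let
\[
L:=\bigl\{(\phi_{-c}(v),v)\;\big|\;v\in SM,\,c\in[c_0,c_1]\bigr\}\subset TM\times SM,
\]
a $2n$-dimensional immersed submanifold (with boundary) parametrized by $\iota(v,c):=(\phi_{-c}(v),v)$. Then $\tilde\Theta^{-1}(L)$ is a $k$-dimensional submanifold of $B\times SM$ whose projection $\pi_B$ satisfies $\#\pi_B^{-1}(z)=\#\crit(\EE_z)^{[c_0,c_1]}$ for almost every $z\in B$.

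The coarea formula applied to $\pi_B$ (a map between $k$-dimensional manifolds whose Jacobian is at most $1$, because $\pi_B$ is the restriction of a coordinate projection) and then to the submersion $\tilde\Theta|_{\tilde\Theta^{-1}(L)}\to L$ (using compactness of $\overline B\times SM$ to bound the fibers of $\tilde\Theta$ and the minimal singular value of $d\tilde\Theta$ from below) yields
\[
\int_B\#\crit(\EE_z)^{[c_0,c_1]}\,dz
\;\leq\;\vol(\tilde\Theta^{-1}(L))
\;\leq\; C_1\vol(L)
\]
for some constant $C_1>0$ depending only on the tomograph $\psi$.

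The crux is the estimate of $\vol(L)$. Expressing $d\iota(v,c)$ in an orthonormal frame of $T_v SM\oplus\partial_c$, the associated Gram matrix has the block form $\bigl(\begin{smallmatrix} I+A & b\\ b^{T} & 1\end{smallmatrix}\bigr)$, where $A=d\phi_{-c}(v)^{*}d\phi_{-c}(v)$; the Schur complement then gives $|J\iota|^2\leq\det(I+A)$. Because the geodesic flow preserves the Liouville measure on $SM$, the eigenvalues of $A$ have product one and are the reciprocals of those of $B:=d\phi_c(\phi_{-c}(v))^{*}d\phi_c(\phi_{-c}(v))$, so that $\det(I+A)=\det(I+B)=|J\Phi_c|^2(\phi_{-c}(v))$. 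Combining with the measure-preserving substitution $u:=\phi_{-c}(v)$ inside the $SM$-integral,
\[
\vol(L)\;\leq\;\int_{c_0}^{c_1}\!\!\int_{SM}|J\Phi_t|(u)\,dV_{SM}(u)\,dt
\;=\;\int_{c_0}^{c_1}\vol(\Phi_t(SM))\,dt,
\]
completing the proof with $C:=C_1$.

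The main obstacle is this final Jacobian identity: the naive polar parametrization $(v,t)\mapsto(tv,t\phi_t(v))$ of $\Phi_1(TM_{[c_0,c_1]})$ would introduce a spurious $t^{2n-1}$ factor from the Sasaki-metric scaling, forcing $C$ to depend on $[c_0,c_1]$. The key move is to target $TM\times SM$ rather than $TM\times TM$ and to parametrize $L$ by $\phi_{-c}$ acting inside $SM$, converting dilations in $TM$ into Liouville-preserving flows on $SM$ and thereby exploiting the eigenvalue-reciprocal identity above.
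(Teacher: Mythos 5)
Your overall architecture is the same as the paper's: realize $\#\crit(\EE_z)^{[c_0,c_1]}$ as a fiber count of a projection $\pi_B$, apply the coarea formula to $\pi_B$ (Jacobian $\le 1$) and then to a submersion onto an ambient ``Lagrangian family'' $L$, and finally bound $\vol(L)$ by $\int_{c_0}^{c_1}\vol(\Phi_t(SM))\,dt$. The Liouville/Schur-complement computation you give for the Jacobian of $\iota(v,c)=(\phi_{-c}(v),v)$ is correct and is a pleasant alternative to the paper's direct coarea computation $\vol(L)=\tfrac{1}{\sqrt2}\int\vol(L_t)\,dt$. However, there is a genuine gap at the fiber-counting step.

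The problem is that you drop the time coordinate: you take $L=\bigcup_c \Phi_c(SM)$ as a subset of $SM\times SM$ rather than the disjoint union $\bigcup_c \Phi_c(SM)\times\{c\}\subset SM\times SM\times[c_0,c_1]$. Under the paper's bijection, $\crit(\EE_z)^{[c_0,c_1]}$ is in bijection with the set of \emph{pairs} $(v,c)\in SM\times[c_0,c_1]$ satisfying $d\psi_z(\psi_z^{-1}(\pi(v)))^*v=\phi_{-c}(v)$. Your fiber $\pi_B^{-1}(z)\subset\{z\}\times SM$ only records $v$, and the map $(v,c)\mapsto v$ fails to be injective precisely when $v$ lies on a closed orbit of the geodesic flow whose period divides $|c-c'|$ for two admissible values $c\neq c'$ in $[c_0,c_1]$: then $\phi_{-c}(v)=\phi_{-c'}(v)$, and two distinct critical points of $\EE_z$ (one an iterate-extension of the other) collapse to a single $v$. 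So in general $\#\pi_B^{-1}(z)\le\#\crit(\EE_z)^{[c_0,c_1]}$, which is the \emph{wrong} direction for the desired upper bound, and the claimed equality ``for a.e.\ $z$'' is false. This is not a measure-zero pathology: on any Zoll manifold (e.g.\ the round sphere) every $v\in SM$ is periodic with a fixed period $T$, so whenever $c_1-c_0>T$ the collapse occurs for \emph{every} admissible $v$ and hence for a full-measure set of $z$. Since the lemma is applied with $[c_0,c_1]=[\epsilon,c]$ and $c\to\infty$, this regime cannot be avoided. A related symptom is that $L$ is only immersed (it self-intersects along pairs $(v,c)\neq(v,c')$ with $\phi_{c-c'}(v)=v$), so the set-theoretic preimage $\tilde\Theta^{-1}(L)$ is not a submanifold at those points and the coarea argument as written does not literally apply.

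The fix is exactly the paper's move: keep the parameter as an honest coordinate, i.e.\ work with $L=\bigcup_{t\in[c_0,c_1]}\Phi_t(SM)\times\{t\}\subset SM\times SM\times[c_0,c_1]$ and lift $\Psi$ to $\tilde\Psi(z,v,t)=(\Psi_z(v),t)$. Equivalently, replace your $\tilde\Theta^{-1}(L)$ by the fiber product $\{(z,v,c):\tilde\Theta(z,v)=\iota(v,c)\}\subset B\times SM\times[c_0,c_1]$, which is an embedded $k$-dimensional manifold whose projection to $B$ has exactly the right fiber cardinality. With that correction, your two-step coarea bound goes through; the remaining Liouville-measure Jacobian estimate is not needed (the paper's $\vol(L)=\tfrac{1}{\sqrt2}\int\vol(L_t)\,dt$ is a clean equality), but it is correct and could be used instead if one prefers to avoid the extra time factor in the ambient space. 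One minor point: no enlargement of $Z$ is required to make $\tilde\Theta$ a submersion; condition \ref{itm:T3} already forces $k\ge 2n$ and already says that the $z$-derivatives alone surject onto $T(TM)$.
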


\begin{proof}
  Throughout the proof we employ the formalism of densities in order
  to avoid orientability issues. If
  $S$ is a submanifold of a Riemannian manifold $M$, we denote by
  $|dV_S|$ the Riemannian density of $S$ induced by the restricted
  Riemannian metric and by $\vol(S)$ the volume of $S$ with respect to
  this density, i.e.,
\begin{align*}
 \vol(S)=\int_S |dV_S|.
\end{align*}

For each $t\in\R$, set $L_t:=\Phi_t(SM)\subset SM\times SM$.  Fix a
compact interval $[c_0,c_1]\subset(0,\infty)$ with non-empty interior
and let
\begin{align}
L:=\!\!\!\bigcup_{t\in[c_0,c_1]}\!\!\! L_t\times\{t\}.
\end{align}
This is a submanifold with boundary of $SM\times SM\times
[c_0,c_1]$. Equip $[c_0,c_1]$ with the Euclidean Riemannian metric and
$SM\times SM\times [c_0,c_1]$ with the product metric. Let
$\tau\colon L\to[c_0,c_1]$ be the projection onto the last factor,
i.e., $\tau(v,\phi_t(v),t)=t$. By the smooth coarea formula (see,
e.g., \cite[Sect.~13.4.3]{Burago:1988aa}), we have 
\begin{align}
\label{e:coarea_L}
  \vol(L) = \int_{c_0}^{c_1} \bigg(\int_{L_t}
  \frac1{\|d\tau(y)\|_{\tilde g}}\,|dV_{L_t}(y)|\bigg) dt.
\end{align}
Let $X=\tfrac{d}{dt}\big|_{t=0}\phi_t$ be the geodesic vector
field on $SM$. We recall that $\|X(v)\|_{\tilde g}=\|v\|_g=1$ for all
$v\in SM$. Therefore,
\begin{align*}
 \|d\tau(v,\phi_t(v),t)\|_{\tilde g}
 =
  \frac{d\tau(v,\phi_t(v),t)(0,X(\phi_t(v)),1)}
  {\sqrt{\|X(\phi_t(v))\|_{\tilde g}^2+1}}
 =
 \frac1{\sqrt2}.
\end{align*}
Plugging this into the coarea formula~\eqref{e:coarea_L} yields
\begin{align}
\label{e:volL_volLt}
  \vol(L) = \frac1{\sqrt2} \int_{c_0}^{c_1} \vol(L_t) dt.
\end{align}

Next, let us lift $\Psi$ to a map 
\[
\tilde\Psi\colon Z\times TM\times \R\to TM\times TM\times\R,
\quad
\tilde\Psi(z,v,t)=(\Psi_z(v),t).
\]
By condition \ref{itm:T3}, $\tilde\Psi$ is a submersion outside
$B\times\zerosection\times \R$. Hence the preimage
$\tilde\Psi^{-1}(L)\subset Z\times SM\times[c_0,c_1]$ is a manifold
with boundary of dimension $\dim B$. We set
\[P:=\tilde\Psi^{-1}(L)\cap\big( B\times TM\times\R \big) \subset
  B\times SM\times[c_0,c_1]\] and let $\pi\colon P\to B$ be the
projection onto the first factor, i.e., $\pi(z,v,t)=z$. Then
\begin{align*}
N(z):= 
\# \crit(\EE_z)^{[c_0,c_1]}
=
\#\big( \pi^{-1}(z) \big)
\end{align*}
is finite for almost all $z\in B$. Therefore,
\begin{align}
\label{e:ineq_tomograph_1}
 \int_{B} N(z)\,dz
 =
 \int_{P} |\pi^*dz|.
\end{align}
Here $dz$ is the Euclidean volume form on $B$ and $|\pi^*dz|$ is the
density associated with the pullback $\pi^*dz$.  Let us equip
$E:=B\times SM\times \R$ with the product Riemannian metric which is
Euclidean on the factors $B$ and $\R$, and $P\subset E$ with the
induced Riemannian metric.  Notice that the differential $d\pi(y)$ is
a contraction for all $y\in P$; namely, the Riemannian norm of any
tangent vector $w\in T_yP$ is larger than or equal to the Euclidean
norm of its image $d\pi(y)w$. This readily implies that
$|\pi^*dz|=f|dV_{P}|$ for some smooth function $f\colon P\to[0,1]$ and
hence
\begin{align}
\label{e:ineq_tomograph_2}
\int_{P} |\pi^*dz|
\leq
\int_{P} |dV_{P}|
=
\vol(P).
\end{align}
Since $\tilde\Psi|_{P}\colon P\to L$ is a submersion, the volume of
$P$ can be computed by means of the smooth coarea formula as
\begin{align}
\label{e:coarea_P}
  \vol(P) = \int_{L} \bigg(\int_{\tilde\Psi^{-1}(y)}
  \frac1{J(w)}\,|dV_{\tilde\Psi^{-1}(y)}(w)|\bigg) |dV_{L}(y)|,
\end{align}
where $J$ denotes the Riemannian Jacobian of $\tilde\Psi|_{P}$ normal
to the fibers, i.e.,
\begin{align*}
 J(w)=\sqrt{\det\big( d\tilde\Psi|_{P}(w)\,d\tilde\Psi|_{P}(w)^*\big) }.
\end{align*}
We set
\begin{align*}
  j(w):=\min_V \sqrt{\det\big( d\tilde\Psi(w)|_V\,
  d\tilde\Psi(w)|_V^*\big) }>0,
\end{align*}
where $V$ ranges over all vector subspaces $V\subset T_w \overline E$
of dimension $\dim L$ orthogonal to the fiber
$\tilde\Psi^{-1}(\tilde\Psi(w))$.  The function $j$ is defined on the
non-compact space $\overline E=\overline B\times
SM\times\R$. Nevertheless, since $\tilde\Psi$ is the direct sum of
$\Psi$ with the identity on $\R$, we readily see that $j(z,v,t)$ is
independent of $t\in\R$ and 
hence it has a positive lower bound
\begin{align*}
 j_0:=\min_{\overline E} j>0.
\end{align*}
For any $w\in P$, if $N_{w}\subset T_w P$ is the orthogonal
complement of $\ker (d\tilde\Psi(w))$, we have
\begin{align*}
  J(w) = \sqrt{\det\big( d\tilde\Psi(w)|_{N_{w}}\,
  d\tilde\Psi(w)|_{N_{w}}^* \big)}\geq j(w)\geq j_0.
\end{align*}
Together with the coarea formula~\eqref{e:coarea_P}, this implies that
\begin{align*}
\vol(P)
\leq 
\underbrace{
\frac1{j_0}
\max_{y\in SM\times SM} \big\{\vol(\Psi^{-1}(y))\big\}}_{C}
\vol(L).
\end{align*}
Notice that the finite quantity $C$ is independent of the value $c_1$.
Finally, from this inequality combined with \eqref{e:volL_volLt},
\eqref{e:ineq_tomograph_1} and \eqref{e:ineq_tomograph_2}, we obtain
the Crofton inequality
\[
\int_{B} N(z)\,dz
\leq 
\frac C{\sqrt2}
\int_{c_0}^{c_1}\vol(L_t)\,dt.
\qedhere
\]
\end{proof}

Having established Lemmas~\ref{l:stability_barcode},
and~\ref{l:Crofton_inequality} we are now in a position to prove
Theorem~\ref{mt:bar<top_plus}. We recall that the barcode entropy
$\hhbar=\hhbar(g;\F)$ and the volume-growth entropy
$\hhvol=\hhvol(g)$ are defined by
\begin{align*}
\hhbar:=\lim_{\epsilon\to0^+}\limsup_{c\to\infty} \frac{\log^+(b_{2\epsilon,c})}c,
\qquad
\hhvol:=\limsup_{t\to\infty} \frac{\log(V(t))}t,
\end{align*}
where $V(t)=\vol(\Phi_t(SM))$.

\vspace{10pt}

\noindent\textbf{Theorem~\ref{mt:bar<top_plus}.}
\emph{On any closed Riemannian manifold and for any coefficient field,
  we have $\hhbar\leq\hhvol$.}

\begin{proof}
For each $r>\hhvol$ there exists $k>0$ such that $V(t)< k\, r\, e^{rt}$ for
  all $t>0$ large enough. Therefore
\begin{align*}
\limsup_{t\to\infty} \frac1t \log\left(\int_0^t  V(s)\,ds \right)
<
\limsup_{t\to\infty} \frac{\log(k) + rt}t  =r.
\end{align*}
In a similar vein, if $\hhvol>0$, for any $r\in(0,\hhvol)$ there exists
$k>0$ such that $V(t)> k\,r\,e^{rt}$ for all $t>0$ large enough, and therefore
\begin{align*}
\limsup_{t\to\infty} \frac1t \log\left(\int_0^t V(s) \,ds \right)
>
\limsup_{t\to\infty} \frac{\log(k) + rt}t =r.
\end{align*}
We conclude
\begin{align}
\label{e:h_int_eps_c}
 \hhvol
 =\limsup_{t\to\infty} \frac1t \log\left(\int_0^t V(s) \,ds\right)
  =\limsup_{c\to\infty}
  \frac1c \log\left(\int_{\epsilon}^{c} V(s) \,ds\right)
\end{align}
for any $\epsilon>0$. Let us fix
$\epsilon\in(0,\tfrac12\injrad(g))$, and consider a tomograph
satisfying the displacement bound~\eqref{e:displacement_tomograph}.
Lemma~\ref{l:Crofton_inequality} and \eqref{e:h_int_eps_c} imply that
\begin{align}
\label{e:h_geq_crit}
  \hhvol\geq\limsup_{c\to\infty}
  \frac1c \log^+\left(\int_{B} N_c(z)\,dz\right),
\end{align}
where $N_c(z):=\# \crit(\EE_z)^{[\epsilon,c]}$.  For almost every
$z\in B$, all critical points in $\crit^+(\EE_z)$ are
non-degenerate. By Lemma~\ref{l:stability_barcode}, for all
$c>\epsilon$ we have
\begin{align*}
  \int_{B} N_c(z)\,dz\geq \vol(B)\,b_{2\epsilon,c-\epsilon}.
\end{align*}
This inequality, together with the lower bound~\eqref{e:h_geq_crit},
implies that
\[
  \hhvol\geq\limsup_{c\to\infty}
  \frac{\log^+(\vol(B)\,b_{2\epsilon,c-\epsilon})}c 
  =
  \limsup_{c\to\infty} \frac{\log^+(b_{2\epsilon,c})}c.
\]
Finally, taking the limit for $\epsilon\to0^+$, we conclude that
$\hhvol\geq\hhbar$.
\end{proof}

\begin{rem}[Floer-theoretic approach]
  \label{rmk:Floer}
  The inequality $\hhbar\leq\hhtop$ can be proved by means of
  Floer-theoretic methods, although the argument is quite
  indirect. Here we briefly outline it.

Consider a closed Riemannian manifold $(M,g)$. With a common abuse of
notation, we still denote by $g$ the Riemannian metric on covectors,
and we consider the function $r:T^*M\to[0,\infty)$,
$r(p):=\|p\|_g$. As in Section~\ref{s:preliminaries}, we denote by
$\sigma(g)$ the length spectrum of our Riemannian manifold. For
$\epsilon>0$ small enough and $a\not\in\sigma(g)$, let
$G=G_{a,\epsilon}:T^*M\to\R$ be the piecewise smooth Hamiltonian such
that $G\equiv0$ on $\{r\leq 1+\epsilon\}$, and $G=ar-a(1+\epsilon)$ on
$\{1+\epsilon\leq r\}$. Let $F=F_{a,\epsilon}:T^*M\to\R$ be a smooth,
fiberwise monotone increasing and convex Hamiltonian approximating
$G$, and equal to $G$ outside the shell
$S=S_\epsilon=\{1\leq r\leq 1+\epsilon\}$. Notice that all the
periodic orbits of the Hamiltonian flow $\phi_{F}^t$ with integer
period lie in the compact set $\{r\leq1+\epsilon\}$.  Hamiltonians of
this type are routinely employed in the construction of the symplectic
homology. The topological entropy of the restricted Hamiltonian flow
$\phi^t_{F}|_{S}$ coincides with the topological entropy of the
reparametrized geodesic flow $\psi^t(v)=\phi^{at}(v)$, which is
$a\,\hhtop(g)$.

For $k\in\N$ and $c>0$, consider the filtered Floer complex
$CF^{(-\infty, c)}(kF)$, which is generated by the $k$-periodic orbits
of the Hamiltonian flow $\phi_{F}^t$ with action in $(-\infty,
c)$. The associated filtered Floer homology $HF^{(-\infty, c)}(kF)$
forms a persistence module, which defines the barcode entropy
$\hbar(F)$, even though the Hamiltonian $F$ is not compactly supported
and does not exactly fit into the framework of
\cite{Cineli:2021aa}. After suitable modifications, the proof of \cite[Thm.\ A]{Cineli:2021aa}
carries over, and therefore $\hhbar(F)$ is bounded from above by the
topological entropy of $\phi^t_{F}|_{S}$. In other words,
\[\hhbar(F)\leq a\,\hhtop(g).\]

With the notation from Section \ref{ss:barcode}, let
$H_a:=H_*(\Lambda^{<a},\Lambda^0)$ be the homology in the persistence
module associated with the energy functional.  The results from
\cite{Abbondandolo:2006aa, Salamon:2006aa} and \cite{Weber:2006aa} or alternatively directly from \cite{Viterbo:1999aa} assert that
$H_a\cong HF^{(-\infty, b)}(F_{a,\epsilon})$ provided $|a-b|$ and
$\epsilon>0$ are small enough. As a consequence, after a suitable manipulation of limits, it is not hard
to see that
\[\hhbar(g)=\lim_{\epsilon\to 0^+}\lim_{a\to 1^+}\hhbar(F_{a,\epsilon}),\] 
and hence
$\hhbar(g)\leq\hhtop(g)$.

After our work has appeared, the inequality $\hhbar\leq\hhtop$, for a suitable notion of barcode entropy defined via symplectic homology, has been proved by Fender, Lee, and Sohn \cite{Fender:2023aa} for the Reeb flows on the boundary of certain Liouville domains.
\end{rem}

\subsection{Relative barcode entropy}
\label{sec:relative}
The notion of barcode entropy has a relative analogue similar to the
one in \cite[Def.~2.1]{Cineli:2021aa}. Namely, let $(M,g)$ be a closed
Riemannian manifold and let $Q\subset M\times M$ be a closed
submanifold of the product. Equip $M\times M$ with the product
Riemannian metric $\tfrac12 g\oplus g$. We denote by $TQ^\perp$ the
normal bundle of $Q$ with respect of this metric and by
$SQ^\perp:=TQ^\perp\cap S(M\times M)$ the unit normal bundle of $Q$.
Let $\Omega$ be the space of $W^{1,2}$-paths $\gamma\colon [0,1]\to M$
such that $(\gamma(0),\gamma(1))\in Q$.  The energy functional
$\EE\colon \Omega\to [0,\infty)$ is again defined by the expression
\eqref{eq:EE}. The critical points of $\EE$ are geodesics or constant
curves $\gamma\in\Omega$ such that
$(-\dot\gamma(0),\dot\gamma(1))\in TQ^\perp$. Consider the geodesic
flow $\phi_t:SM\to SM$ and its lift
\begin{align*}
 \Phi_t:SM\to SM\times SM, \qquad\Phi_t(v)=(-v,\phi_t(v)).
\end{align*}
This map is the same as the one defined in~\eqref{e:Phi}, except for
the minus sign in the first factor; clearly, the exponential growth
rate of the volume of $\Phi_t(SM)$, measured with respect to the
Riemannian volume form, is still the volume-growth entropy
$\hhvol=\hhvol(g)$. The filtration $\EE^{-1}[0,a^2)\subset\Omega$
forms a persistence module, which allows us to define the
\emph{relative barcode entropy} $\hhbar(Q)=\hhbar(Q;g)$, arguing as in
Definition~\ref{def:hhbar}.  For each $c>0$, we have a one-to-one
correspondence
\begin{align*}
  \crit(\EE_z)\cap\EE_z^{-1}(c^2)
  & \ \bijection^{1:1}\ SQ^\perp\cap\Phi_c(SM)\\
  \gamma
  &  \ \bijection\ \tfrac1c(\dot\gamma(0),\dot\gamma(1)).
\end{align*}
Generalizing the arguments in the proof of
Theorem~\ref{mt:bar<top_plus}, one can prove that
\begin{align}
\label{e:relative_Thm_A}
 \hhbar(Q)\leq\hhvol.
\end{align}
In the special case where $Q=\Delta=\{(x,x)\ |\ x\in M\}$, we have
$\Omega=\Lambda$, and the relative barcode entropy $\hhbar(\Delta)$
reduces to the ordinary barcode entropy $\hhbar$. Therefore, the
inequality of Theorem~\ref{mt:bar<top_plus} is a special case
of~\eqref{e:relative_Thm_A}. On the other hand, it is not clear to us how to
extend Theorem~\ref{mt:bar>top_I} to relative barcode
entropy.

Another interesting particular case of \eqref{e:relative_Thm_A} is
when $Q=N_0\times N_1$ where $N_0$ and $N_1$ are closed submanifolds
of $M$. Then $\Omega$ is the space of paths in $M$ connecting $N_0$ to
$N_1$ and the critical points of $\EE$ are the geodesics connecting
these submanifolds and orthogonal to both of them. This version of
relative barcode entropy is a literal Morse-theoretic counterpart for
geodesic flows of \cite[Def.~2.1]{Cineli:2021aa} and
\eqref{e:relative_Thm_A} turns then into an analogue of
\cite[Thm.~5.1]{Cineli:2021aa}.

\section{Lower bound on the barcode entropy}
\label{s:bar>top}

Postponing the proof of Theorem~\ref{mt:bars_lower_bound} and
Corollary~\ref{c:non_degenerate} to the end of
Section~\ref{s:persistence}, let us now derive
Theorem~\ref{mt:bar>top_I} and Corollaries \ref{c:surfaces} from
Corollary~\ref{c:non_degenerate}.  The arguments are similar to the
ones from \cite[Thm.~B and C]{Cineli:2021aa}, and require some
familiarity with ergodic theory. Let $\phi_t\colon SM\to SM$ be the
geodesic flow on the unit tangent bundle $SM$ of a closed Riemannian
manifold $(M,g)$. Given a $\phi_t$-invariant probability measure $\mu$
on $SM$, we denote by $h_\mu$ the measure-theoretic entropy of the
geodesic flow $\phi_t$ with respect to $\mu$; see, e.g.,
\cite[Def.~4.1.1]{Fisher:2019vz}. We briefly refer to $h_\mu$ as to
the entropy of $\mu$.

\vspace{12pt}

\noindent\textbf{Theorem~\ref{mt:bar>top_I}.}
\emph{On any closed Riemannian manifold and for any coefficient field, if $I$ is a hyperbolic compact invariant subset of the geodesic flow, then $\hhbar\geq\hhtop(I)$.}

\begin{proof}
  Let $I\subset SM$ be a hyperbolic compact invariant subset of the
  geodesic flow $\phi_t\colon SM\to SM$ with positive topological
  entropy $\hhtop(I)>0$. By the variational principle for entropy,
  \cite[Cor.~4.3.9]{Fisher:2019vz}, for each $\rho>0$ there exists an
  invariant probability measure $\mu$ supported in $I$ with entropy
  $h_{\mu}\geq\hhtop(I)-\rho$.  Since $h_{\mu}$ is the average of
  $\nu\mapsto h_\nu$ over the ergodic components of $\mu$,
  \cite[Th.~8.4]{Walters:1982aa}, there exists an ergodic invariant
  probability measure $\nu$ whose support is contained in the support
  of $\mu$ such that $h_{\nu}\geq h_{\mu}-\rho$.  Since $I$ is a
  hyperbolic compact invariant subset, the restricted geodesic flow
  $\phi_t|_I$ has only one zero Lyapunov exponent at every point: the
  one corresponding to the geodesic vector field.  In particular,
  $\nu$ has only one zero Lyapunov exponent. Suppose that $\rho>0$ is
  so small that $h_{\nu}>0$.  The measure $\nu$ satisfies the
  assumptions of a theorem of Lian and Young
  \cite[Th.~D']{Lian:2012aa}, which extends to flows a result of Katok
  and Mendoza for surface diffeomorphisms
  \cite[Th.~S.5.9(1)]{Katok:1995aa}, and implies the existence of a
  locally maximal hyperbolic compact invariant subset
  $I_\rho\subset SM$ such that
  $\hhtop(I_\rho)\geq h_{\nu}-\rho\geq\hhtop(I)-2\rho$.

  Let $p(c)$ be the number of closed orbits of minimal period at most $c$ of
  the restricted geodesic flow $\phi_t|_{I_\rho}$. We recall that a
  critical circle $S^1\cdot\gamma\subset\crit^+(\EE)$ is called prime
  when the closed geodesic $\gamma$ is not iterated. By definition,
  $p(c)$ is equal to the number of prime critical circles of $\EE$ in
  $\PP^{\leq c}(I_\rho):=\PP(I_\rho)\cap\EE^{-1}(0,c^2]$. Since the
  compact invariant set $I_\rho$ is hyperbolic, we have
  \begin{align}
    \label{eq:growth_rate}
  \hhtop(I_\rho)=\limsup_{c\to\infty} \frac{\log^+(p(c))}{c},
\end{align}
see, e.g., \cite[Thm.\ 4.2.24 and Rem.\ 4.2.25]{Fisher:2019vz}.

Let $\B=\B(g;\F)$ be the closed geodesics barcode. As in
Section~\ref{ss:main_results}, we denote by $b_{\epsilon,c}$ the
number of bars in $\B$ having size at least $\epsilon$ and
intersecting the interval $(0,c]$. Since $I_\rho$ is hyperbolic, it is
expansive (see, e.g., \cite[Thm.\ 5.4.22]{Fisher:2019vz}) and all
closed geodesics tangent to $I_\rho$ are non-degenerate. Therefore,
Corollary~\ref{c:non_degenerate} provides a constant $\delta>0$ such
that $b_{\delta,c}\geq\tfrac12 p(c)$ for all $c>0$. Since
$b_{\epsilon,c}\geq b_{\delta,c}$ for all $\epsilon\in(0,\delta)$, we
have
\begin{align*}
 \hhbar
 &=  
 \lim_{\epsilon\to0^+}
 \limsup_{c\to\infty} \frac{\log^+(b_{\epsilon,c})}{c}
 \geq 
 \limsup_{c\to\infty} \frac{\log^+(p(c))}{c}
 =
 \hhtop(I_\rho)\geq
 \hhtop(I)-2\rho. 
\end{align*}
This inequality holds for an arbitrarily small $\rho>0$, and hence
$\hhbar\geq \hhtop(I)$.
\end{proof}

\vspace{12pt}

\noindent\textbf{Corollary~\ref{c:surfaces}.}
\emph{On any closed Riemannian surface and for any coefficient field,
  we have $\hhbar=\hhvol=\hhtop$.}

\begin{proof}
Theorem~\ref{mt:bar<top_plus} and Yomdin theorem provide the inequalities 
\begin{align}
\label{e:A+Yomdin}
\hhbar\leq\hhvol\leq\hhtop .
\end{align}
Assume now that our closed Riemannian manifold $M$ has dimension 2,
and its geodesic flow has positive topological entropy
$\hhtop>0$. Applying the variational principle for entropy and the
ergodic decomposition as in the proof of Theorem~\ref{mt:bar>top_I},
for each sufficiently small $\epsilon>0$ we obtain an ergodic
probability measure $\mu$ on $SM$ that is invariant under the geodesic
flow and has entropy bounded from below as
$h_{\mu}\geq \hhtop-\epsilon>0$. By the Ruelle inequality \cite[Thm.\
S.2.13]{Katok:1995aa}, the measure $\mu$ has one positive Lyapunov
exponent and one negative Lyapunov exponent. Since $SM$ has
dimension~3, $\mu$ has three Lyapunov exponents, and therefore only
one zero Lyapunov exponent: the one corresponding to the geodesic
vector field.  This allows us to apply \cite[Thm.\ D$'$]{Lian:2012aa}
as in the proof of Theorem~\ref{mt:bar>top_I}, and obtain a locally
maximal hyperbolic compact invariant subset $I_\epsilon\subset SM$
such that
$\hhtop(I_\epsilon)\geq h_{\mu}-\epsilon\geq \hhtop-2\epsilon$. This,
combined with Theorem~\ref{mt:bar>top_I}, implies that
$\hhbar \geq \hhtop-2\epsilon$ for any arbitrarily small
$\epsilon>0$, and therefore $\hhbar \geq \hhtop$. This, together
with~\eqref{e:A+Yomdin}, implies the desired identity
$\hhbar=\hhvol=\hhtop$.
  \end{proof}

\section{Crossing energy bound}\label{s:crossing_energy_bound}

The rest of the paper is devoted to the proof of 
Theorem~\ref{mt:bars_lower_bound}, which provides a uniform lower
bound for the size of the bars associated with a locally maximal,
expansive, compact invariant subset of the geodesic flow. As 
we explained 
in Section~\ref{ss:invariant_subsets}, the proof of
Theorem~\ref{mt:bars_lower_bound} hinges on a uniform crossing energy
bound, which we will establish at the end of this section in
Proposition~\ref{p:crossing_energy_period_1}, after necessary
preliminaries.

\subsection{Functional setting in period $\tau$}

Let $(M,g)$ be a closed Riemannian manifold. In order to study its
closed geodesics, it suffices to focus on the 1-periodic
ones. Nevertheless, to establish a uniform crossing energy bound, it
will be useful for us to work with closed geodesics of any period. Let
us introduce the setting.

Let $\Pi:=\Wloc(\R,M)$ be the free path space, endowed with the $\Wloc$-topology. For each $\tau>0$, we denote the subspace of
$\tau$-periodic loops by
\begin{align*}
  \Lambda_\tau =\big\{ \gamma\in\Pi\ \big|\
  \gamma(t)=\gamma(t+\tau)\ \ \forall t\in\R \big\}.
\end{align*}
Then we have the energy functional
\begin{align*}
 \EE_\tau\colon\Lambda_\tau \to[0,\infty),
 \qquad
 \EE_\tau(\gamma)=\int_0^\tau \|\dot\gamma(t)\|_g^2\,dt,
\end{align*}
whose positive critical set
$\crit^+(\EE_\tau):=\crit(\EE_\tau)\cap\EE_\tau^{-1}(0,\infty)$
consists of the $\tau$-periodic closed geodesics. For $\tau=1$, this
functional setting reduces to the one from Section~\ref{ss:barcode}.

\subsection{Finite-dimensional approximations of the path space}
Recall that the Riemannian distance of $(M,g)$ is
\begin{align*}
 d\colon M\times M\to[0,\infty),
 \qquad
 d(x,y)=\inf_\gamma \int_0^1 \|\dot\gamma(t)\|_g\, dt,
\end{align*}
where the infimum is taken over all absolutely continuous curves
$\gamma\colon [0,1]\to M$ with endpoints $\gamma(0)=x$ and
$\gamma(1)=y$.  Let
\begin{align*}
 \rho:=\injrad(g)>0
\end{align*}
be the injectivity radius of $(M,g)$. Consider the space of sequences
\begin{align*}
  P = \big\{\xx=(x_i)_{i\in\Z}\in M^{\Z}\ \big|\
  d(x_i,x_{i+1})\leq\rho/2\ \ \forall i\in\Z  \big\}. 
\end{align*}
For every $\sigma>0$, we can view $P$ as a subspace of the free
path space $\Pi$ via the embedding
\begin{align}
\label{e:PM_into_PiM}
  \iota_\sigma\colon P \hookrightarrow\Pi,
  \qquad \xx\mapsto\iota_\sigma(\xx)=\gamma_{\xx,\sigma}.
\end{align}
Here $\gamma_{\xx,\sigma}\in\Pi$ is the unique curve such that, for
each $i\in\Z$, the restriction
$\gamma_{\xx,\sigma}|_{[i\sigma,(i+1)\sigma]}$ is a smooth geodesic
segment and $\gamma_{\xx,\sigma}(i\sigma)=x_i$. Notice that the
smaller the parameter $\sigma$, the better $P$ ``approximates'' the
path space $\Pi$.

For each integer $k\geq2$, we define the space of $k$-periodic
sequences
\begin{align*}
L_k = \big\{\xx\in P \ \big|\ x_i=x_{i+k}\ \ \forall i\in\Z  \big\}.
\end{align*}
This is a finite-dimensional compact manifold with corners, of dimension 
\[\dim L_k=k\dim M.\]
Then we can identify $L_k$ with a submanifold of the free loop space
$\Lambda_{k\sigma}$ via the restriction
$\iota_\sigma\colon L_k\hookrightarrow \Lambda_{k\sigma} $ of
the 
embedding~\eqref{e:PM_into_PiM}. We denote the restriction of the energy
functional to $L_k$ by
\begin{align*}
E_{k,\sigma}:=\EE_{k\sigma}\circ\iota_{\sigma},
\qquad
E_{k,\sigma}(\xx)=\frac1\sigma\sum_{i=0}^{k-1} d(x_i,x_{i+1})^2.
\end{align*}
The positive critical set
$\crit^+(E_{k,\sigma}):=\crit(E_{k,\sigma})\cap
E_{k,\sigma}^{-1}(0,\infty)$ consists of $\xx\in L_k$ such that the
associated curve $\gamma_{\xx,\sigma}=\iota_\sigma(\xx)$ is a
$k\sigma$-periodic closed geodesic.

Let us equip the space $L_k$ with the Riemannian metric 
\[g_k(\vv,\ww):=\sum_{i=0}^{k-1}g(v_i,w_i),\] which induces the
Riemannian distance $d_k\colon L_k\times L_k\to[0,\infty)$. Notice
that $d_k$ and $d$ are related by
\begin{align*}
d_k(\xx,\yy)^2 \geq \sum_{i=0}^{k-1} d(x_i,y_i)^2 ,
\end{align*}
and actually the equality holds at least when $\xx$ and $\yy$ are
contained in the interior of $L_k$ and are sufficiently close
therein.  One may also introduce a Riemannian distance on the
infinite-dimensional free loop space $\Lambda$. Instead, we will only
need to consider the $C^0$ distance on the free path space $\Pi$,
which is given by
\begin{align*}
 d_{C^0}\colon\Pi\times\Pi\to[0,\infty),
 \qquad
 d_{C^0}(\gamma_1,\gamma_2)=\sup_{t\in\R} d(\gamma_1(t),\gamma_2(t)).
\end{align*}
When restricted to $L_k$ via the embedding $\iota_{k,\sigma}$, the
distances $d_{C^0}$ and $d_k$ are equivalent. Here we just prove the
following weaker statement, which is sufficient for our purposes.

\begin{lem}
\label{l:comparing_dist}
For every $R>0$, there exists $r>0$ with the following property. For each
$\sigma>0$ and 
$\xx,\yy\in P$ such that
$d_{C^0}(\gamma_{\xx,\sigma},\gamma_{\yy,\sigma})\geq R$, there exists
$i\in\Z$ such that $d(x_i,y_i)\geq r$.
\end{lem}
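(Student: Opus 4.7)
The plan is to argue by contrapositive: for every $R>0$ I will produce $r>0$ such that if $d(x_i,y_i)<r$ for all $i\in\Z$, then $d_{C^0}(\gamma_{\xx,\sigma},\gamma_{\yy,\sigma})<R$. A key preliminary observation is that the $C^0$-distance between the two broken geodesics is independent of $\sigma$: after affinely rescaling each interval $[i\sigma,(i+1)\sigma]$ to $[0,1]$, the restriction $\gamma_{\xx,\sigma}|_{[i\sigma,(i+1)\sigma]}$ becomes the unit-time constant-speed geodesic from $x_i$ to $x_{i+1}$, which is well defined and unique because $d(x_i,x_{i+1})\le\rho/2<\rho=\injrad(g)$. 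Hence it is enough to control, uniformly in $i$ and $\sigma$, the $C^0$-distance between the unique unit-time geodesic from $x_i$ to $x_{i+1}$ and the one from $y_i$ to $y_{i+1}$.

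To make this uniform, I would introduce the compact set
\[
  K:=\big\{(x,x',y,y')\in M^4\ \big|\
  d(x,x')\le\rho/2,\ d(y,y')\le\rho/2\big\}
\]
and the function $F\colon K\to[0,\infty)$ given by
\[
  F(x,x',y,y')
  :=\sup_{s\in[0,1]}
  d\bigl(\exp_x(s\exp_x^{-1}(x')),\exp_y(s\exp_y^{-1}(y'))\bigr),
\]
where $\exp_x^{-1}(x')$ denotes the unique preimage in the ball of radius $\rho$ in $T_xM$. The map $(x,x')\mapsto\exp_x^{-1}(x')$ is continuous on pairs with $d(x,x')<\rho$, so $F$ is continuous on $K$, and it vanishes identically on the closed diagonal $\Delta:=\{(x,x',x,x')\mid d(x,x')\le\rho/2\}\subset K$.

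The proof is then completed by a soft compactness step. For any $R>0$, the preimage $F^{-1}[R,\infty)$ is a compact subset of $K\setminus\Delta$, so its product-metric distance to $\Delta$ is strictly positive; call it $r=r(R)>0$. By construction, $d(x,y)<r$ and $d(x',y')<r$ force $F(x,x',y,y')<R$. Applying this on every interval $[i\sigma,(i+1)\sigma]$: if $d(x_i,y_i)<r$ holds for all $i\in\Z$, then $F(x_i,x_{i+1},y_i,y_{i+1})<R$ for every $i$, whence $\sup_{t\in\R} d(\gamma_{\xx,\sigma}(t),\gamma_{\yy,\sigma}(t))\le R$; running the argument with $R$ replaced by $R/2$ at the outset yields the strict inequality needed for the contrapositive. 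I do not foresee any genuine obstacle: the lemma is in essence a uniform-continuity statement for the broken-geodesic interpolation map, and the only point requiring attention is that $r$ must be independent of both $\sigma$ and $i$, which is precisely what the compactness of $K$ guarantees.
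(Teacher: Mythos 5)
Your proof is correct and takes essentially the same approach as the paper: both reduce to a single subdivision interval via the $\sigma$-independence observation and then exploit compactness of the space of admissible endpoint pairs together with continuity of the $C^0$-distance between short geodesic segments, your distance-to-the-diagonal argument and the paper's monotone auxiliary function $F(r)$ being two packagings of the same compactness step. One small point to make explicit is the choice of product metric on $M^4$: with the Euclidean product, $d(x,y)<r$ and $d(x',y')<r$ only give distance $<r\sqrt{2}$ to the diagonal rather than $<r$, so you should either use the sup product metric or replace $r$ by $r/\sqrt{2}$.
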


\begin{proof}
  It is sufficient to prove the lemma for the space of pairs
  \[K:=\big\{ \xx=(x_0,x_1)\in M\times M\ \big|\ d(x_0,x_1)\leq\rho/2
    \big\}.  \] For each $\xx\in K$, let
  $\gamma_{\xx}\colon [0,1]\to M$ be the unique shortest geodesic
  segment joining $x_0=\gamma_{\xx}(0)$ and $x_1=\gamma_{\xx}(1)$,
  i.e., $ \gamma_{\xx}(t)=\exp_{x_0}(t\exp_{x_0}^{-1}(x_1))$.  We denote
  the $C^0$-distance function on $K$ by
\begin{align*}
 D\colon K\times K\to[0,\infty),
 \qquad
 D(\xx,\yy)=\max_{t\in[0,1]} d(\gamma_{\xx}(t),\gamma_{\yy}(t)).
\end{align*}
The continuous function $F\colon [0,\infty)\to[0,\infty)$ defined by
\begin{align*}
  F(r)=\max\big\{D(\xx,\yy)\
  \big|\ d(x_0,y_0)^2+d(x_1,y_1)^2\leq 4r^2  \big\}
\end{align*}
is everywhere finite due to the
compactness of $K$.  Notice that $F(0)=0$, $F(r)>0$ for each $r>0$,
and $F$ is monotone increasing (perhaps not strictly monotone
increasing). Given $ R>0$, we set
\begin{align*}
 r:=\inf\big\{ r'>0\ \big|\ F(r')\geq R \big\}\in(0,\infty],
\end{align*}
where as usual $\inf\varnothing=\infty$. If $D(\xx,\yy)\geq R$ for
$\xx,\yy\in K$, then $d(x_0,y_0)^2+d(x_1,y_1)^2\geq 4r^2$, and the
inequality $d(x_i,y_i)\geq r$ must hold for at least one value of
$i\in\{0,1\}$.
\end{proof}

\subsection{Abstract crossing energy bound} 
The gradient of the energy $E_{k,\sigma}$ with respect to the
Riemannian metric $g_k$ is given by $\nabla E_{k,\sigma}(\xx) = \ww$,
where
\begin{align*}
  w_i=2\big(\dot\gamma_{\xx,\sigma}(\sigma i^-) - \dot\gamma_{\xx,\sigma}(\sigma i^
  +)\big),\qquad\forall i\in\Z.
\end{align*}
The following statement is a refinement of the classical
Palais--Smale compactness condition for the energy functional
$E_{k,\sigma}$.

\begin{lem}\label{l:compactness}
For all sequences 
\[
k_n\in\N\cap[2,\infty),
\qquad
\xx_{n}=(x_{n,i})_{i\in\Z}\in L_{k_n},
\qquad
\sigma_n\in[\sigma_0,\sigma_1]\subset(0,\infty),
\] 
satisfying 
\begin{align}
\label{e:infinitesimal_gradient}
 \lim_{n\to\infty}\|\nabla E_{k_n,\sigma_n}(\xx_n)\|_{g_{k_n}}=0,
\end{align} 
the corresponding sequence of curves
$\gamma_n:=\gamma_{\xx_n,\sigma_n}$ is compact in the $\Wloc$
topology, and any limit point of $\gamma_{n}$ is either a geodesic
$\gamma\colon\R\looparrowright M$ or a constant curve
$\gamma\equiv\gamma(0)$.
\end{lem}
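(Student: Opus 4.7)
The plan is to combine uniform Lipschitz bounds, Arzelà--Ascoli, and the smooth dependence of short geodesics on their endpoints, and then to extract the geodesic character of the limit from the vanishing-gradient hypothesis.

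First, on each piece $[\sigma_n i,\sigma_n(i+1)]$ the curve $\gamma_n=\iota_{\sigma_n}(\xx_n)$ is a geodesic segment with constant speed $d(x_{n,i},x_{n,i+1})/\sigma_n\leq\rho/(2\sigma_0)$. Hence the family $\{\gamma_n\}$ is uniformly Lipschitz with values in the compact manifold $M$. By Arzelà--Ascoli and a diagonal extraction, a subsequence converges in $C^0_{\mathrm{loc}}(\R,M)$ to a Lipschitz curve $\gamma\colon\R\to M$. Passing to a further subsequence, I may assume $\sigma_n\to\sigma\in[\sigma_0,\sigma_1]$.

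Next, for every fixed $i\in\Z$ the vertices $x_{n,i}=\gamma_n(\sigma_n i)$ converge to $\gamma(\sigma i)$, and the consecutive distances $d(x_{n,i},x_{n,i+1})$ stay uniformly below the injectivity radius $\rho$. Since the short geodesic between two such points depends smoothly on its endpoints through the exponential map, the restriction $\gamma_n|_{[\sigma_n i,\sigma_n(i+1)]}$ converges in $C^\infty$ to $\gamma|_{[\sigma i,\sigma(i+1)]}$, which is therefore a geodesic arc. Combined with the uniform $L^\infty$ bound on $\dot\gamma_n$, this piecewise smooth convergence upgrades the $C^0_{\mathrm{loc}}$ convergence to $\Wloc$ convergence of $\gamma_n$ to $\gamma$.

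Finally, to glue the pieces I would exploit the identity
\[\|\nabla E_{k_n,\sigma_n}(\xx_n)\|_{g_{k_n}}^2 = 4\sum_{i=0}^{k_n-1}\|\dot\gamma_n(\sigma_n i^-)-\dot\gamma_n(\sigma_n i^+)\|_g^2,\]
whose left-hand side tends to zero by hypothesis. Each nonnegative summand is dominated by the full sum, and $k_n$-periodicity extends the estimate to all $i\in\Z$; thus $\|\dot\gamma_n(\sigma_n i^-)-\dot\gamma_n(\sigma_n i^+)\|_g\to 0$ for every fixed $i\in\Z$. The piecewise $C^\infty$ convergence from the previous step transfers one-sided limits of $\dot\gamma_n$ to one-sided limits of $\dot\gamma$, yielding $\dot\gamma(\sigma i^-)=\dot\gamma(\sigma i^+)$ at every break point. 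Since $\gamma$ is a geodesic on each piece and its tangent vectors match at every break point, $\gamma$ is a smooth geodesic $\R\to M$; if the common speed is positive, $\gamma$ is an immersed geodesic $\R\looparrowright M$, and otherwise $\gamma\equiv\gamma(0)$.

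The main obstacle I expect is the bookkeeping when $k_n\to\infty$: the break points become denser and the fundamental domain grows without bound, so one must be careful that the $L^\infty$-equicontinuous diagonal extraction still delivers the correct one-sided limits of $\dot\gamma$ at each $\sigma i$. The key observation making this manageable is that on any compact subinterval there are only finitely many break points $\sigma_n i$ with $|\sigma_n i|\leq T$, and for each of these the individual jump is controlled by the total gradient norm, independently of $k_n$.
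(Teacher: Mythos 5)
Your proposal is correct and follows essentially the same route as the paper's proof: extract subsequences so that the vertices $x_{n,i}$ and parameters $\sigma_n$ converge, observe that each piece is a short geodesic segment depending smoothly on its endpoints so the pieces converge in $C^\infty$ (the paper makes this precise via the affine reparametrizations $\theta_{n,i}$, which you implicitly handle by working on compact intervals with finitely many break points), and then use the vanishing of $\|\nabla E_{k_n,\sigma_n}(\xx_n)\|_{g_{k_n}}$ to kill the tangent-vector jumps at the break points in the limit. The only cosmetic difference is that you first invoke Arzelà--Ascoli on the whole Lipschitz family and then deduce vertex convergence, whereas the paper extracts the vertex convergence directly by compactness of $M$; both arrive at the same place, and your explicit gradient identity with the factor $4$ correctly unwinds the formula $w_i=2\bigl(\dot\gamma_{\xx,\sigma}(\sigma i^-)-\dot\gamma_{\xx,\sigma}(\sigma i^+)\bigr)$ stated just before the lemma.
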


\begin{proof}
  Since $M$ is compact, after extracting subsequences by means of a
  usual diagonal procedure, we can ensure that
\begin{align*}
\lim_{n\to\infty} x_{n,i}= x_i,\ \ \forall i\in\Z,
\qquad\qquad
\lim_{n\to\infty} \sigma_n=\sigma\in[\sigma_0,\sigma_1].
\end{align*}
Since $d(x_{n,i},x_{n,i+1})\leq\rho/2$, we have
$d(x_{i},x_{i+1})\leq\rho/2$ as well and hence
\[\xx=(x_i)_{i\in\Z}\in P.\] 
We set $\gamma:=\gamma_{\xx,\sigma}$ and consider the monotone increasing
affine diffeomorphisms
\[\theta_{n,i}\colon[i\sigma,(i+1)\sigma]\to[i\sigma_n,(i+1)\sigma_n].\] 
Each segment $\gamma_{n}\circ\theta_{n,i}$ converges to
$\gamma|_{[\sigma i,(i+1)\sigma]}$ in the $C^\infty$-topology, and, in
particular, the whole curves $\gamma_{n}$ converge to $\gamma$ in the
$\W_{\mathrm{loc}}$-topology. Assumption~\eqref{e:infinitesimal_gradient}
implies that
\[
  \dot\gamma(\sigma i^-)-\dot\gamma(\sigma i^+) = \lim_{n\to\infty}
  \big(\dot\gamma_{n}(\sigma_n i^-)-\dot\gamma_{n}(\sigma_n
  i^+)\big)=0,\qquad\forall i\in\Z. \]
Thus $\gamma\colon\R\to M$
is a smooth solution of the geodesic equation
$\nabla_t\dot\gamma\equiv0$.
\end{proof}

Adopting commonly used terminology, we call \emph{anti-gradient flow
  segment} of $E_{k,\sigma}$ any smooth path
$u\colon [s_0,s_1]\to L_k$ satisfying the ordinary differential
equation $\dot u=-\nabla E_{k,\sigma}(u)$.

\begin{lem}[Abstract crossing energy bound]\label{l:crossing_energy}
Let $\UU_0\subset \UU_1\subset\Pi$ be two open subsets such that 
\begin{itemize}
\item[(i)] $\overline{\UU_1\setminus \UU_0}$ does not contain any
  geodesic or any constant curve,
\item[(ii)] there exists $R>0$ such that
  $d_{C^0}(\gamma_0,\gamma_1)\geq R$ for all $\gamma_0\in \UU_0$,
  $\gamma_1\in \Pi\setminus \UU_1$.
\end{itemize}
For each $[\sigma_0,\sigma_1]\subset(0,\infty)$, there exists
$\delta>0$ with the following property: for each
$\sigma\in[\sigma_0,\sigma_1]$, $k\in\N\cap[2,\infty)$ and an
anti-gradient flow segment $u\colon [s_0,s_1]\to L_k$ of
$E_{k,\sigma}$ that crosses the shell $\UU_1\setminus \UU_0$
$($i.e.~$\iota_\sigma(u(r_0))\in \UU_0$ and
$\iota_\sigma(u(r_1))\not \in \UU_1$ for some
$r_0,r_1\in[s_0,s_1]$$)$, we have
\[E_{k,\sigma}(u(s_0))-E_{k,\sigma}(u(s_1))\geq \delta.\]
\end{lem}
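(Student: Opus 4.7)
The plan is to argue by contradiction, using Lemma~\ref{l:comparing_dist} to turn the $C^0$ hypothesis~(ii) into a length lower bound on the finite-dimensional approximations $L_k$, and Lemma~\ref{l:compactness} to extract a limiting geodesic or constant curve trapped in the forbidden shell. Suppose no such $\delta$ existed. Then there are sequences $k_n\in\N\cap[2,\infty)$, $\sigma_n\in[\sigma_0,\sigma_1]$, and anti-gradient flow segments $u_n\colon[s_{0,n},s_{1,n}]\to L_{k_n}$ of $E_{k_n,\sigma_n}$ each crossing the shell, with energy drop $\Delta_n:=E_{k_n,\sigma_n}(u_n(s_{0,n}))-E_{k_n,\sigma_n}(u_n(s_{1,n}))\to 0$. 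Replacing each $u_n$ by its restriction to the crossing sub-interval (and possibly swapping endpoint labels), I can assume $\iota_{\sigma_n}(u_n(s_{0,n}))\in\UU_0$ and $\iota_{\sigma_n}(u_n(s_{1,n}))\notin\UU_1$.

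The first step converts hypothesis~(ii) into a lower bound on the time-length of the flow. By~(ii), $d_{C^0}(\iota_{\sigma_n}(u_n(s_{0,n})),\iota_{\sigma_n}(u_n(s_{1,n})))\geq R$, so Lemma~\ref{l:comparing_dist} supplies a constant $r=r(R)>0$ with $d_{k_n}(u_n(s_{0,n}),u_n(s_{1,n}))\geq r$. Using $\dot u_n=-\nabla E_{k_n,\sigma_n}(u_n)$ and Cauchy--Schwarz,
\[
  r^{2}\;\leq\;\bigg(\int_{s_{0,n}}^{s_{1,n}}\|\nabla E_{k_n,\sigma_n}(u_n(s))\|_{g_{k_n}}\,ds\bigg)^{\!2}\;\leq\;(s_{1,n}-s_{0,n})\,\Delta_n,
\]
so $s_{1,n}-s_{0,n}\geq r^{2}/\Delta_n\to\infty$.

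The second step locates a near-critical point that genuinely lies in the shell. Note first that~(ii) forces $\overline{\UU_0}\subset\UU_1$. Let $b_n$ be the first time $\iota_{\sigma_n}(u_n(t))$ exits $\UU_1$, and let $a_n<b_n$ be the last time before $b_n$ at which $\iota_{\sigma_n}(u_n(t))\in\overline{\UU_0}$; then $\iota_{\sigma_n}(u_n(t))\in\UU_1\setminus\UU_0$ for every $t\in(a_n,b_n)$. Approximating $a_n$ from the left (inside $\UU_0$) and $b_n$ from the right (outside $\UU_1$) and rerunning the Cauchy--Schwarz argument of Step~1 on these approximating intervals yields $b_n-a_n\geq r^{2}/\Delta_n\to\infty$. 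The mean value theorem applied to $\|\nabla E_{k_n,\sigma_n}(u_n(\cdot))\|_{g_{k_n}}^{2}$ over $[a_n,b_n]$ then produces $t_n\in[a_n,b_n]$ with
\[
  \|\nabla E_{k_n,\sigma_n}(u_n(t_n))\|_{g_{k_n}}^{2}\;\leq\;\frac{\Delta_n}{b_n-a_n}\;\leq\;\frac{\Delta_n^{2}}{r^{2}}\;\longrightarrow\;0,
\]
and by construction $\iota_{\sigma_n}(u_n(t_n))\in\overline{\UU_1\setminus\UU_0}$.

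Setting $\xx_n:=u_n(t_n)\in L_{k_n}$, Lemma~\ref{l:compactness} yields a subsequential $\Wloc$-limit $\gamma$ of the curves $\gamma_{\xx_n,\sigma_n}=\iota_{\sigma_n}(\xx_n)$ that is either a geodesic or a constant curve in $M$. Because $\overline{\UU_1\setminus\UU_0}$ is closed in $\Pi=\Wloc(\R,M)$ and contains every $\gamma_{\xx_n,\sigma_n}$, it also contains $\gamma$, contradicting hypothesis~(i). The main technical hurdle lies in Step~2: the single gradient drop estimate on $[s_{0,n},s_{1,n}]$ does not by itself place the selected point inside the shell, so one must excise the sub-interval $[a_n,b_n]$ on which the curve visits the shell and rerun the length bound there, which requires a limiting version of~(ii) since $a_n$ and $b_n$ typically sit on $\partial\UU_0$ or $\partial\UU_1$ rather than strictly inside $\UU_0$ or strictly outside $\UU_1$.
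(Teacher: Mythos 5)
Your argument is correct in substance and relies on the same two ingredients as the paper's proof (Lemmas~\ref{l:compactness} and~\ref{l:comparing_dist}), but it assembles them along a genuinely different route. The paper first uses Lemma~\ref{l:compactness} to establish a uniform lower bound $\lambda>0$ on $\|\nabla E_{k,\sigma}\|_{g_k}$ over $L_k\cap(\UU_1\setminus\UU_0)$, uniformly in $k$ and $\sigma\in[\sigma_0,\sigma_1]$, and independently uses Lemma~\ref{l:comparing_dist} to get the $d_k$-distance lower bound $r$ across the shell; it then extracts a sub-interval $[r_0,r_1]$ on which the trajectory crosses the shell and estimates directly, using $\|\dot u\|=\|\nabla E_{k,\sigma}(u)\|$, that $E_{k,\sigma}(u(s_0))-E_{k,\sigma}(u(s_1))\geq\int_{r_0}^{r_1}\|\nabla E_{k,\sigma}(u)\|\,\|\dot u\|\,ds\geq\lambda r=:\delta$. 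Your proof instead runs a single contradiction: Cauchy--Schwarz converts the distance bound and the assumed small energy drop into a divergent crossing time, the mean value theorem manufactures a near-critical point inside the closed shell, and only then does Lemma~\ref{l:compactness} produce the contradicting geodesic. Both routes work. The paper's is shorter, avoids Cauchy--Schwarz, and produces the explicit constant $\delta=\lambda r$; yours is closer in spirit to a Palais--Smale argument and perhaps more intuitive if one wants to see where the hypothetical degenerate gradient flow line concentrates.

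One small imprecision in Step~2: with $a_n$ defined as the last time before $b_n$ at which $\iota_{\sigma_n}(u_n(t))\in\overline{\UU_0}$, times just to the left of $a_n$ need not lie in $\UU_0$ (the trajectory may have exited $\UU_0$ earlier and merely grazed $\partial\UU_0$ at $t=a_n$), so \emph{approximating $a_n$ from the left inside $\UU_0$} is not automatic. The fix is to take instead $a_n:=\sup\{t<b_n\ :\ \iota_{\sigma_n}(u_n(t))\in\UU_0\}$, which is well defined since $\iota_{\sigma_n}(u_n(s_{0,n}))\in\UU_0$; this choice supplies $a'_n\uparrow a_n$ with $u_n(a'_n)\in\UU_0$ and still places the trajectory on $(a_n,b_n)$ in $\UU_1\setminus\UU_0$. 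With that adjustment the Cauchy--Schwarz and mean value steps go through exactly as you wrote them, and the subsequential limit indeed lands in $\overline{\UU_1\setminus\UU_0}$, giving the contradiction with~(i).
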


\begin{proof}
  Assumption (i), together with Lemma~\ref{l:compactness}, provides a
  constant $\lambda>0$ such that
\begin{align*}
  \|\nabla E_{k,\sigma}(\xx)\|_{g_k}\geq \lambda,
  \qquad
  \forall \sigma\in[\sigma_0,\sigma_1],\ k\in\N\cap[2,\infty),\
  \xx\in L_k\cap \UU_1\setminus\UU_0. 
\end{align*}
Given $R>0$ from assumption (ii), pick 
$r>0$ as in Lemma~\ref{l:comparing_dist}.  Let $u\colon
[s_0,s_1]\to L_k$ be an anti-gradient flow segment of
$E_{k,\sigma}$ that crosses the shell $\UU_1\setminus
\UU_0$. Let
$[r_0,r_1]\subset[s_0,s_1]$ be a subinterval such that
$\iota_\sigma(u(r_i))\in\partial\UU_0$ and
$\iota_\sigma(u(r_{1-i}))\in\partial\UU_1$ for some
$i\in\{0,1\}$, and $\iota_\sigma\circ
u|_{(r_0,r_1)}$ is contained in
$\UU_1\setminus\UU_0$. By assumption~(ii) and
Lemma~\ref{l:comparing_dist}, we have
\begin{align*}
 \int_{r_0}^{r_1} \|\dot u(s)\|_{g_k}\, ds \geq d_k(u(r_0),u(r_1))\geq r.
\end{align*}
Therefore,
\begin{align*}
E_{k,\sigma}(u(s_0))-E_{k,\sigma}(u(s_1))
&=
\int_{s_0}^{s_1} \|\nabla E_{k,\sigma}(u(s))\|_{g_k}^2\, ds\\
&\geq 
\int_{r_0}^{r_1} \|\nabla E_{k,\sigma}(u(s))\|_{g_k}\|\dot u(s)\|_{g_k}\, ds\\
&\geq \lambda r =:\delta.
\qedhere
\end{align*}
\end{proof}

In the next section, we will apply the abstract crossing energy bound
from Lemma~\ref{l:crossing_energy} to anti-gradient flow segments
crossing neighborhoods of closed orbits contained in a suitable
compact invariant set.

\subsection{Locally maximal, expansive, compact invariant sets}

For each $v\in SM$, let $\gamma_v\in\Pi$ be the corresponding geodesic
such that $\dot\gamma_v(0)=v$. Notice that $\gamma_v$ is parametrized
by arc length, i.e., $\|\dot\gamma_v\|_g\equiv1$. Let $I\subset SM$ be
an invariant subset of the geodesic flow $\phi_t$. We denote the space
of unit-speed geodesics tangent to $I$ by
$\GG(I)=\big\{\gamma_v\ |\ v\in I\big\}\subset\Pi$, the subspace of
$\tau$-periodic closed geodesics among them by
$\PP_\tau(I)=\GG(I)\cap\Lambda_\tau $, and their union by
\begin{align*}
 \PP_*(I):=\bigcup_{\tau>0}\PP_\tau(I).
\end{align*}

Furthermore, we denote the open $C^0$-ball of radius $r$ centered at
$\gamma\in\Pi$ by
\begin{align*}
\UU(\gamma,r)
:=
\big\{ \zeta\in\Pi\ \big|\  d_{C^0}(\gamma,\zeta)<r \big\}.
\end{align*}
In a similar vein, we denote the open $C^0$-neighborhood of radius $r$
of a subset $\WW\subset\Pi$ by
\begin{align*}
\UU(\WW,r):=\bigcup_{\gamma\in\WW} \UU(\gamma,r).
\end{align*}

The real line $\R$ acts on the free path space $\Pi$ as
\begin{align}
\label{e:R_action}
 t\cdot\gamma=\gamma(t+\cdot)\in\Pi,\qquad t\in\R,\ \gamma\in\Pi.
\end{align}
Notice that, if $\gamma$ is a periodic curve (i.e.,
$\gamma\in\Lambda_\tau $ for some $\tau>0$), then $\R\cdot\gamma$ is
an embedded circle in $\Pi$.

\begin{lem}
\label{l:no_geodesics}
Let $I\subset SM$ be a locally maximal, expansive, compact invariant
subset of the geodesic flow. There exists $r>0$ such that, for each
$\gamma\in\PP_*(I)$, the subset
$\UU(\R\cdot\gamma,r)\setminus\R\cdot\gamma$ does not contain
geodesics or constant curves.
\end{lem}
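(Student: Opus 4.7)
I argue by contradiction: if no uniform $r > 0$ exists, then there are sequences $\gamma_n \in \PP_*(I)$, which I may take to be primitive with primitive period $T_n$, and $\zeta_n \in \Pi$ that is a geodesic or constant curve with $\zeta_n \notin \R\cdot\gamma_n$ and $d_{C^0}(\R\cdot\gamma_n,\zeta_n) < 1/n$. After replacing $\gamma_n$ by a suitable time-shift, I may assume $d_{C^0}(\gamma_n,\zeta_n) < 1/n$. If $\zeta_n \equiv p_n$ is constant, then the image of $\gamma_n$ lies in $B(p_n,1/n)$, which for large $n$ is a strictly convex ball of radius smaller than $\injrad(g)/2$; this contradicts the fact that no non-constant closed geodesic is contained in a strictly convex ball.

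So each $\zeta_n$ is a geodesic of constant speed $c_n > 0$. Write $\zeta_n(t) = \pi(\phi_{c_n t}(w_n))$ with $w_n = \dot\zeta_n(0)/c_n \in SM$, and set $v_n = \dot\gamma_n(0) \in I$. A Taylor-expansion argument for the exponential map (comparing positions at times $t$ and $t+h$ with the optimized choice $h = (1/n)^{1/2}$) upgrades the uniform $C^0$-closeness of the curves to uniform closeness of their velocities in the Sasaki metric: $\tilde d(\phi_t v_n, \phi_{c_n t}(w_n)) \to 0$ uniformly in $t \in \R$, and in particular $c_n \to 1$. Since the whole orbit $\{\phi_s(w_n)\}_{s \in \R}$ then lies in any prescribed neighborhood of $I$ for $n$ large, local maximality with an isolating neighborhood $U$ forces $w_n \in I$.

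Next, fix $\epsilon > 0$ smaller than half the minimum primitive period $T_0 > 0$ of closed orbits in $I$ (positive since the length of any closed geodesic of the compact Riemannian manifold $(M,g)$ is bounded below), and let $\delta = \delta(\epsilon) > 0$ be the expansiveness constant. Applying expansiveness with the continuous reparameterization $s(t) = c_n t$ to the bound from the previous paragraph yields $w_n = \phi_{t_{0,n}}(v_n)$ for some $|t_{0,n}| \leq \epsilon$, and therefore $\zeta_n(t) = \gamma_n(c_n t + t_{0,n})$. If $c_n = 1$, then $\zeta_n = t_{0,n}\cdot\gamma_n \in \R\cdot\gamma_n$, contradicting our hypothesis; so $c_n \neq 1$ and $\alpha_n := c_n - 1 \to 0$ with $\alpha_n \neq 0$.

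To close the argument, I exploit that the parameter-shift $\alpha_n t + t_{0,n}$ takes every real value as $t$ ranges over $\R$: choosing $t_k \in \R$ with $\alpha_n t_k + t_{0,n} \equiv T_n/2 \pmod{T_n}$, the uniform Sasaki bound gives $\tilde d(\phi_{t_k} v_n, \phi_{t_k + T_n/2}(v_n)) \to 0$. After passing to a subsequence in which $T_n$ is bounded (the main case; the case $T_n \to \infty$ is handled separately by a local-in-time argument exploiting compactness of $I$), the residues $\{t_k \bmod T_n\}$ become dense in $[0, T_n)$ as $c_n \to 1$ with $c_n \neq 1$, since $c_n$ must be either irrational or rational with denominator tending to infinity; continuity of $u \mapsto \tilde d(u, \phi_{T_n/2}(u))$ then yields $\max_{u \in \R\cdot v_n} \tilde d(u, \phi_{T_n/2}(u)) \to 0$. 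But the primitive orbit $\R\cdot v_n$ has period $T_n > 2\epsilon$, so expansiveness applied to the pair $(v_n, \phi_{T_n/2}(v_n))$ with reparameterization $s(t) = t$ bounds this maximum below by $\delta$, giving the desired contradiction. The hardest step is this final density argument: converting pointwise Sasaki smallness at a sparse set of times into a uniform sup-bound that contradicts expansiveness, which requires the arithmetic control of $c_n$ near $1$ together with the $C^0$-to-Sasaki upgrade from the exponential-map estimate.
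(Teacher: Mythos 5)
Your setup — reducing by contradiction to sequences $\gamma_n\in\PP_*(I)$, $\zeta_n$ with $d_{C^0}(\gamma_n,\zeta_n)<1/n$ and $\zeta_n\notin\R\cdot\gamma_n$, disposing of the constant-curve case, upgrading $C^0$-closeness to Sasaki-closeness of velocities, invoking local maximality to get $w_n\in I$, and then applying expansiveness with the reparametrization $s(t)=c_nt$ to conclude $w_n=\phi_{t_{0,n}}(v_n)$, hence $\zeta_n(t)=\gamma_n(c_nt+t_{0,n})$ with $c_n\neq 1$ — is sound and is essentially the paper's argument repackaged (the paper proceeds in the opposite logical order, first showing $w_n\notin\R\cdot v_n$ and only then contradicting the Sasaki-closeness via expansiveness, but the content is the same). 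The problem is your closing step. The arithmetic claim that the residues $\{t_k \bmod T_n\}$ become dense because ``$c_n$ must be either irrational or rational with denominator tending to infinity'' is false: what governs the spacing of the residues is the \emph{numerator} of $\alpha_n=c_n-1$ written in lowest terms, not the denominator of $c_n$. For instance $c_n=(n+1)/n$ gives $\alpha_n=1/n$, whose numerator is fixed equal to $1$, and the set of times $t$ with $\alpha_n t+t_{0,n}\equiv T_n/2\pmod{T_n}$ then reduces modulo $T_n$ to a \emph{single} residue, not a dense set. The subsequent appeal to continuity to upgrade pointwise smallness to a sup-bound therefore fails, and the case $T_n\to\infty$ is waved at but not actually treated.

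The fix is much simpler than what you are attempting, and it is exactly what the paper does at this point: once you know $\zeta_n(t)=\gamma_n(c_nt+t_{0,n})$ with $c_n\neq 1$ and $\gamma_n$ is $T_n$-periodic with unit speed, the affine shift $(c_n-1)t+t_{0,n}$ takes \emph{every} real value, so there is some $t_1$ with $(c_n-1)t_1+t_{0,n}\equiv \rho/2 \pmod{T_n}$, where $\rho=\injrad(g)$. Then
\[
d\bigl(\zeta_n(t_1),\gamma_n(t_1)\bigr)=d\bigl(\gamma_n(t_1+\rho/2),\gamma_n(t_1)\bigr)=\rho/2,
\]
because a unit-speed geodesic of length $\rho/2<\injrad(g)$ is minimizing. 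This contradicts $d_{C^0}(\zeta_n,\gamma_n)<1/n$ once $n>2/\rho$, with no arithmetic on $c_n$, no density argument, and no case split on whether $T_n$ is bounded. Replacing your final paragraph with this two-line computation closes the proof and aligns it with the paper's.
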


\begin{proof}
  Recall that every geodesic $\gamma\colon\R\looparrowright M$ has
  diameter at least $\rho$, i.e.,
\begin{align*}
 \max_{t_1,t_2\in\R} d(\gamma(t_1),\gamma(t_2))\geq\rho.
\end{align*}
Therefore, for each $x\in M$ and for each geodesic
$\gamma\colon\R\looparrowright M$, there exists $t\in\R$ such that
$d(x,\gamma(t))\geq\rho/2$. In other words, $\UU(\gamma,\rho/2)$ does
not contain constant curves.

Next, arguing by contradiction, assume that there exist two sequences
of geodesics
\[\gamma_n\in\GG(I) \quad \text{and} \quad
  \zeta_n\in\UU(\gamma_n,1/n)\setminus\R\cdot\gamma_n.\] 
  We set
$\lambda_n:=\|\dot\zeta_n(0)\|_g$ and consider the associated unit
tangent vectors
\begin{align*}
  v_n:=\dot\gamma_n(0)\in I \quad \text{and} \quad
  w_n:=\frac{\dot\zeta_n(0)}{\lambda_n}\in SM.
\end{align*}
Notice that 
\[
  \phi_t(v_n)=\dot\gamma_n(t) \quad \text{and} \quad \phi_{\lambda_n
    t}(w_n)=\frac{\dot\zeta_n(t)}{\lambda_n}.\] Since
$d_{C^0}(\zeta_n,\gamma_n)<1/n$ and both $\zeta_n$ and $\gamma_n$ are
geodesics, we readily see that $\lambda_n\to1$ as $n\to\infty$, and
\begin{align}
\label{e:convergence_zetan_gamman}
  \lim_{n\to\infty} \sup_{t\in\R}
  \tilde d(\phi_{\lambda_n t}(w_n),\phi_t(v_n)) = 0,
\end{align}
where $\tilde d\colon SM\times SM\to[0,\infty)$ is the distance on the
unit tangent bundle induced by the Riemannian metric $g$.  In
particular, if $n$ is large enough, the whole orbit
$t\mapsto\phi_t(w_n)$ lies in an isolating neighborhood $U\subset SM$
of $I$ and hence $w_n\in I$ due to the local maximality of
$I$. Henceforth, we assume $n$ to be large enough so that $w_n\in I$
and $1/n<\rho/2$.

We claim that $w_n$ does not belong to the orbit
$t\mapsto\phi_t(v_n)$. Indeed, if $w_n=\phi_{t_0}(v_n)$ for some
$t_0\in\R$, we have $\zeta_n(t)=\gamma_n(t_0+\lambda_n t)$ for all
$t\in\R$.  Since $\zeta_n\not\in\R\cdot\gamma_n$, we must have
$\lambda_n\neq1$. Therefore, since $\gamma_n$ is periodic, there
exists $t_1\in\R$ such that
$\zeta_n(t_1)=\gamma_n(t_1+\rho/2)$. However, this would imply
$d(\zeta_n(t_1),\gamma_n(t_1))=\rho/2>1/n$, contradicting the fact
that $\zeta_n\in\WW(\gamma_n,1/n)$.

The assumption that $I$ is expansive provides, in particular, a
uniform lower bound $\delta>0$ with the following property: for every
$n\in\N$, since $w_n$ and $v_n$ belong to distinct orbits of the
geodesic flow, there exists $t\in\R$ such that
\begin{align*}
 \tilde d(\phi_{\lambda_n t}(w_n),\phi_{t}(v_n))\geq\delta,
\end{align*}
contradicting~\eqref{e:convergence_zetan_gamman}.
\end{proof}

We are now 
ready to prove the first version of the uniform crossing energy bound.

\begin{prop}[Crossing energy bound]
\label{p:crossing_energy}
Let $I\subset SM$ be a locally maximal, expansive, compact invariant
subset of the geodesic flow. Then, for each $r_1>0$ sufficiently
small, $r_0\in(0,r_1)$ and $[\sigma_0,\sigma_1]\subset(0,\infty)$,
there exists $\delta>0$ such that the following holds: for each
$\sigma\in[\sigma_0,\sigma_1]$, $k\in\N\cap[2,\infty)$,
$\gamma\in\PP_*(I)$ and an anti-gradient flow segment
$u\colon [s_0,s_1]\to L_k$ of $E_{k,\sigma}$ crossing the shell
$\UU(\R\cdot\gamma,r_1)\setminus\UU(\R\cdot\gamma,r_0)$, we have
\[E_{k,\sigma}(u(s_0))-E_{k,\sigma}(u(s_1))\geq \delta.\]
\end{prop}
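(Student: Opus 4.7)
The plan is to deduce Proposition~\ref{p:crossing_energy} from the abstract crossing energy bound Lemma~\ref{l:crossing_energy} by taking, for each $\gamma \in \PP_*(I)$, the open sets $\UU_0 := \UU(\R\cdot\gamma, r_0)$ and $\UU_1 := \UU(\R\cdot\gamma, r_1)$, and then to upgrade the resulting bound to one uniform over $\PP_*(I)$. First I would invoke Lemma~\ref{l:no_geodesics} to produce a constant $r > 0$, independent of $\gamma \in \PP_*(I)$, such that $\UU(\R\cdot\gamma, r) \setminus \R\cdot\gamma$ contains no geodesics or constant curves; the phrase ``$r_1 > 0$ sufficiently small'' is then interpreted as the requirement $r_1 < r$. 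With $\UU_0, \UU_1$ chosen as above, hypothesis~(ii) of Lemma~\ref{l:crossing_energy} follows from the triangle inequality with $R := r_1 - r_0$. For hypothesis~(i), openness of $\UU_0$ gives $\overline{\UU_1 \setminus \UU_0} \subseteq \overline{\UU_1} \setminus \UU_0$; each $\zeta$ in this closed shell satisfies $r_0 \leq d_{C^0}(\zeta, \R\cdot\gamma) \leq r_1 < r$, hence lies in $\UU(\R\cdot\gamma, r) \setminus \R\cdot\gamma$, which by Lemma~\ref{l:no_geodesics} contains no geodesics or constant curves.

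This application of Lemma~\ref{l:crossing_energy} yields, for each fixed $\gamma \in \PP_*(I)$, a constant $\delta(\gamma) > 0$. The substantive task is to make $\delta$ uniform in $\gamma$. Inspecting the proof of Lemma~\ref{l:crossing_energy}, the length bound extracted from Lemma~\ref{l:comparing_dist} depends only on $R = r_1 - r_0$, so it suffices to produce a uniform lower bound $\lambda > 0$ on $\|\nabla E_{k,\sigma}(\xx)\|_{g_k}$ over all admissible $(\sigma, k, \gamma, \xx)$ with $\sigma \in [\sigma_0, \sigma_1]$, $k \in \N \cap [2, \infty)$, $\gamma \in \PP_*(I)$, and $\xx \in L_k$ satisfying $\iota_\sigma(\xx) \in \overline{\UU_1} \setminus \UU_0$. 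I would argue by contradiction: a failing sequence $(\gamma_n, \sigma_n, k_n, \xx_n)$ yields, via Lemma~\ref{l:compactness}, a $\Wloc$-subsequential limit $\zeta$ of $\iota_{\sigma_n}(\xx_n)$ that is either a geodesic or a constant curve. For each $n$ choose $s_n \in \R$ so that $\tilde\gamma_n := s_n \cdot \gamma_n$ realizes $d_{C^0}(\iota_{\sigma_n}(\xx_n), \R \cdot \gamma_n) \in [r_0, r_1]$. Since $\dot{\tilde\gamma}_n(0) \in I$ and $I$ is compact, a subsequence of these velocities converges to some $w \in I$, whence $\tilde\gamma_n \to \gamma_w \in \GG(I)$ in $C^\infty_{\mathrm{loc}}$.

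The main obstacle is to combine these limits into a contradiction: $\gamma_w$ lies a priori in $\GG(I)$ but not necessarily in $\PP_*(I)$, so Lemma~\ref{l:no_geodesics} does not apply to $\gamma_w$ directly. To resolve this, translate the whole contradiction sequence simultaneously in the time parameter — by a multiple of $\sigma_n$ (which preserves the $\iota_{\sigma_n}(L_{k_n})$ framework via a cyclic shift of indices) — so as to bring a time at which the global lower bound $d_{C^0}(\iota_{\sigma_n}(\xx_n), \tilde\gamma_n) \geq r_0$ is nearly attained into a neighborhood of $t = 0$, then pass to the limit as before. In the limit $\zeta$ is a geodesic or constant curve with $d(\zeta(0), \gamma_w(0))$ bounded below (modulo an error controlled by $\sigma_1$ times the speeds) and $d_{C^0_{\mathrm{loc}}}(\zeta, \gamma_w) \leq r_1 < r$. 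Re-running the argument of Lemma~\ref{l:no_geodesics} directly on the limit pair $(\zeta, \gamma_w)$ — where local maximality of $I$ forces $\dot\zeta(0)$ to lie in $I$, and expansiveness of $I$ then forbids $\zeta$ and $\gamma_w$ from being distinct $I$-orbits that remain uniformly close — produces the desired contradiction and yields the uniform $\delta > 0$.
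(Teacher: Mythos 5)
Your proposal takes the paper's route --- reduce to Lemma~\ref{l:crossing_energy} with $\UU_0=\UU(\R\cdot\gamma,r_0)$ and $\UU_1=\UU(\R\cdot\gamma,r_1)$ --- but you correctly observe that a literal application of that lemma, once per $\gamma$, produces a constant $\delta(\gamma)$ that is a priori not uniform over $\PP_*(I)$; the paper's three-sentence proof leaves this point implicit. You diagnose it accurately: of the two ingredients in the proof of Lemma~\ref{l:crossing_energy}, the quantity $r$ from Lemma~\ref{l:comparing_dist} depends only on $R=r_1-r_0$ and is automatically uniform, so everything reduces to a $\gamma$-uniform lower bound $\lambda$ for $\|\nabla E_{k,\sigma}(\xx)\|_{g_k}$ on the $\gamma$-indexed shells, and this requires exactly the simultaneous compactness argument you outline --- extracting a $\Wloc$-limit of $\iota_{\sigma_n}(\xx_n)$ via Lemma~\ref{l:compactness} together with a limit $\gamma_w\in\GG(I)$ using compactness of $I$, shifting the whole sequence in time by a multiple of $\sigma_n$ to carry a witness of the lower-bound condition into a bounded window, and finally invoking local maximality and expansiveness. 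The caveat you raise, that the limit $\gamma_w$ may lie in $\GG(I)$ without lying in $\PP_*(I)$, is real but harmless: periodicity of $\gamma$ enters the proof of Lemma~\ref{l:no_geodesics} only to produce a time where $\zeta$ and $\gamma$ differ by $\rho/2$ when $\zeta(t)=\gamma(t_0+\lambda t)$ with $\lambda\neq1$, and this step needs nothing beyond the identity $d(\gamma(s),\gamma(s+\rho/2))=\rho/2$ for any unit-speed geodesic, which holds since $\rho/2<\injrad(g)$. With that observation, local maximality puts the normalized velocity of $\zeta$ in $I$, the $\rho/2$-identity forces $\|\dot\zeta\|=1$, and expansiveness at a threshold $\epsilon<r_0$ forces $\zeta=t_0\cdot\gamma_w$ with $|t_0|\leq\epsilon$, contradicting the pointwise lower bound $d(\zeta(\tilde t),\gamma_w(\tilde t))\geq r_0$ you extracted by the time shift. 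So your proposal is a correct, and in fact more careful, rendition of the paper's argument, supplying exactly the uniformity-in-$\gamma$ step that the paper's proof takes for granted.
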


\begin{proof}
  By Lemma \ref{l:no_geodesics}, there exists $r_1>0$ small enough so
  that, for each $\gamma\in\PP_*(I)$, the open set
  $\UU(\R\cdot\gamma,r_1)\setminus\R\cdot\gamma$ contains neither
  geodesics nor constant curves. Fix $r_0\in(0,r_1)$ and set
  $$\UU_0:=\UU(\R\cdot\gamma,r_0) \subset
  \UU_1:=\UU(\R\cdot\gamma,r_1)
  $$
  so that $\overline{\UU_1\setminus\UU_0}$ does not contain geodesics
  or constant curves. For each $\gamma_0\in \UU_0$ and
  $\gamma_1\in \Pi\setminus\UU_1$, we have
  $d_{C^0}(\gamma_0,\gamma_1)\geq r_1-r_0>0$.  This, together with
  Lemma~\ref{l:crossing_energy}, implies the proposition.
\end{proof}

\subsection{Functional setting in period one}
\label{ss:period_1}

In the previous sections, for the sake of simplicity, we worked
with the spaces $\Lambda_\tau $ of $\tau$-periodic curves. As we
already remarked, since any reparametrization with constant speed of a
geodesic is still a geodesic, in the variational theory of closed
geodesics it is enough to employ the space $\Lambda_1$ of
$1$-periodic curves.  Clearly, $\Lambda_\tau $ is diffeomorphic to
$\Lambda_1$ via the diffeomorphism
$\psi_\tau\colon\Lambda_\tau \to\Lambda_1$,
$\psi_\tau(\gamma)=\gamma(\tau\,\cdot)$.  Under $\psi_\tau$, the
energy transforms as $\EE_1\circ\psi_\tau = \tau \EE_\tau$.  From now
on, we will be working in the setting of $\tau=1$.

In the finite-dimensional reduction, we will only need one
discretization parameter, since we will set $\sigma:=1/k$.  In order
to simplify the notation, we will suppress $1$, $\tau$, $\sigma$, and
instead write, consistently with the notation from
Section~\ref{ss:barcode},
\begin{align*}
 \Lambda:=\Lambda_1,
 \qquad
 \EE=\EE_1\colon \Lambda\to[0,\infty),
 \quad
 E_{k}=E_{k,\sigma}\colon L_k\to[0,\infty).
\end{align*}
We have the already introduced $S^1=\R/\Z$-action,
$t\cdot\gamma=\gamma(t+\cdot)$, on $\Lambda$. While this action does
not preserve the subspace $\iota_{1/k}(L_k)$, it does preserve the
critical set $\iota_{1/k}(\crit(E_k))\subset\crit(\EE)$.

For a closed geodesic $\gamma\in\crit^+(\EE)$, let us fix the
discretization parameter
\[k\geq\lceil 8\EE(\gamma)^{1/2}/\rho\rceil.\]
Then
\begin{align*}
d(\gamma(t),\gamma(t+\tfrac1k))\leq\rho/8,\qquad\forall t\in S^1.
\end{align*}
In particular, $\gamma$ belongs to the image of the embedding
$\iota_{1/k}\colon L_k \hookrightarrow\Lambda$. In what follows,
with a slight abuse of notation, we will occasionally treat
$\iota_{1/k}$ as an inclusion and simply consider $L_k$ as a
subspace of the free loop space $\Lambda$.

From now on, we will only need to work with the $C^0$-neighborhoods of
a closed geodesic $\gamma\in\crit^+(\EE)$ in the free loop space
$\Lambda$. Let us denote such a neighborhood of radius $r>0$ by
\begin{align*}
  \VV(\gamma,r):=\UU(\gamma,r)\cap\Lambda=\big
  \{\zeta\in\Lambda\ \big|\ d_{C^0}(\gamma,\zeta)<r\big\},
\end{align*}
and also set
\begin{align*}
 \VV(S^1\cdot\gamma,r) := \bigcup_{t\in S^1}\VV(t\cdot\gamma,r).
\end{align*}

Let $I\subset SM$ be an invariant subset of the geodesic flow
$\phi_t$. The space $\PP(I)$ of 1-periodic closed geodesics tangent to
$I$, which was introduced in~\eqref{e:P(I)}, is related to the space
$\PP_*(I)$ of unit speed closed geodesics tangent to $I$ by
\begin{align*}
 \PP(I):= \bigcup_{\tau>0}\psi_\tau(\PP_\tau(I)).
\end{align*}

Proposition~\ref{p:crossing_energy} readily translates as follows to
the setting in period one.

\begin{prop}[Crossing energy bound in $\Lambda$]
\label{p:crossing_energy_period_1}
Let $I\subset SM$ be a locally maximal, expansive, compact invariant
subset of the geodesic flow. For every $r_1>0$ sufficiently small and
$r_0\in(0,r_1)$, there exists $\delta>0$ such that the following
holds: for each $\gamma\in\PP(I)$ and
$k:=\left\lceil (\EE(\gamma)^{1/2}+1)8/\rho\right\rceil$, any
anti-gradient flow segment $u\colon [s_0,s_1]\to L_k$ of $E_{k}$ that
crosses the shell
$\VV(S^1\cdot\gamma,r_1)\setminus \VV(S^1\cdot\gamma,r_0)$ satisfies
\[
E_{k}(u(s_0))-E_{k}(u(s_1))\geq \sqrt{\EE(\gamma)}\,\delta.
\tag*{\qed}
\]
\end{prop}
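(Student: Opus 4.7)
The plan is to deduce Proposition~\ref{p:crossing_energy_period_1} from the period-$\tau$ version, Proposition~\ref{p:crossing_energy}, by a rescaling argument built on the reparametrization diffeomorphism $\psi_\tau\colon\Lambda_\tau\to\Lambda_1$ from Section~\ref{ss:period_1}. Given $\gamma\in\PP(I)$, I set $\tau:=\sqrt{\EE(\gamma)}$ and let $\tilde\gamma:=\psi_\tau^{-1}(\gamma)\in\Lambda_\tau$ be its unit-speed reparametrization. Since the unit tangent vectors of $\tilde\gamma$ coincide with those of $\gamma$, we have $\tilde\gamma\in\PP_\tau(I)\subset\PP_*(I)$, so Proposition~\ref{p:crossing_energy} can be applied to it provided we keep the discretization parameter in a fixed compact range.

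The first step is to verify that, with $k=\lceil 8(\tau+1)/\rho\rceil$, the rescaled discretization parameter $\tilde\sigma:=\tau/k$ lies in a compact subinterval $[\tilde\sigma_0,\tilde\sigma_1]\subset(0,\rho/8)$ depending only on $\rho$. The upper bound is immediate from $k\geq 8(\tau+1)/\rho$, which gives $\tilde\sigma\leq\rho\tau/(8(\tau+1))<\rho/8$. For the lower bound one uses the fact that every non-constant closed geodesic has length at least $2\rho$, so $\tau\geq 2\rho$; combined with $k\leq 8(\tau+1)/\rho+1$ this yields $\tilde\sigma\geq\rho\tau/(8(\tau+1)+\rho)\geq\tilde\sigma_0$ for a constant $\tilde\sigma_0>0$ depending only on $\rho$. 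This uniformity is the key technical point of the whole argument, and it is precisely what motivates the specific form of $k$.

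The second step is to transport the problem to the period-$\tau$ setting. A direct calculation shows that $\psi_\tau$ is an isometry for the $C^0$-distance, intertwines the discretizations via $\iota_{1/k}=\psi_\tau\circ\iota_{\tau/k}$, and sends $\R\cdot\tilde\gamma\subset\Lambda_\tau$ to $S^1\cdot\gamma\subset\Lambda_1$. Consequently the shell $\VV(S^1\cdot\gamma,r_1)\setminus\VV(S^1\cdot\gamma,r_0)$ corresponds under $\psi_\tau^{-1}$ to the intersection of $\UU(\R\cdot\tilde\gamma,r_1)\setminus\UU(\R\cdot\tilde\gamma,r_0)$ with $\Lambda_\tau$. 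From the identity $\EE_1\circ\psi_\tau=\tau\EE_\tau$ one obtains $E_k=\tau\tilde E_k$ on $L_k$, where $\tilde E_k:=E_{k,\tilde\sigma}$, and hence $\nabla E_k=\tau\nabla\tilde E_k$ with respect to the fixed Riemannian metric $g_k$. Therefore, given an anti-gradient flow segment $u\colon[s_0,s_1]\to L_k$ of $E_k$, the reparametrization $\tilde u(\tilde s):=u(\tilde s/\tau)$ defined on $[\tau s_0,\tau s_1]$ is an anti-gradient flow segment of $\tilde E_k$ whose image in $L_k$ coincides with that of $u$, so the crossing condition transfers verbatim to $\tilde u$ in the $\Lambda_\tau$ setting.

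The final step is to apply Proposition~\ref{p:crossing_energy} to $\tilde u$, using the compact interval $[\tilde\sigma_0,\tilde\sigma_1]$ and the same radii $r_0,r_1$. This produces a constant $\delta>0$, depending only on $I$, $g$, $r_0$, and $r_1$, such that
\[
\tilde E_k(\tilde u(\tau s_0))-\tilde E_k(\tilde u(\tau s_1))\geq\delta.
\]
Multiplying through by $\tau$ and using $E_k=\tau\tilde E_k$ together with $\tilde u(\tau s_i)=u(s_i)$, this becomes
\[
E_k(u(s_0))-E_k(u(s_1))\geq\tau\delta=\sqrt{\EE(\gamma)}\,\delta,
\]
which is exactly the desired bound. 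In short, once Proposition~\ref{p:crossing_energy} is available the argument is essentially bookkeeping; there is no conceptual obstacle, and the only delicate point is ensuring the scaling $k\sim\sqrt{\EE(\gamma)}$ keeps $\tilde\sigma$ inside a fixed compact subinterval of $(0,\infty)$.
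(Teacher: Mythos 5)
Your argument is correct and is precisely the translation the paper leaves implicit: Proposition~\ref{p:crossing_energy_period_1} is stated with a bare $\square$, the text just saying that Proposition~\ref{p:crossing_energy} ``readily translates'' to the period-one setting, and your proof supplies that translation. The crux you identify — that $k=\lceil 8(\sqrt{\EE(\gamma)}+1)/\rho\rceil$ together with the universal lower bound $\sqrt{\EE(\gamma)}\geq 2\injrad(g)$ forces $\tilde\sigma=\tau/k$ into a fixed compact subinterval of $(0,\infty)$ — is exactly what makes the $\delta$ from Proposition~\ref{p:crossing_energy} uniform over $\gamma\in\PP(I)$, and the bookkeeping ($\psi_\tau\circ\iota_{\tau/k}=\iota_{1/k}$, $E_k=\tau\tilde E_k$, $\nabla E_k=\tau\nabla\tilde E_k$ for the fixed metric $g_k$, $C^0$-isometry of $\psi_\tau$, and the reparametrization $\tilde u(\tilde s)=u(\tilde s/\tau)$) is all verified correctly.
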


\section{Persistence of the total local homology}
\label{s:persistence}

\subsection{Local homology of closed geodesics}
\label{ss:local_homology}

Let $(M,g)$ be a closed Riemannian manifold,
$\EE\colon \Lambda\to[0,\infty)$ be its energy functional and
$\gamma\in\crit^+(\EE)$ be a closed geodesic. The \emph{local
  homology} of the critical circle $S^1\cdot\gamma$ with coefficient
in a field $\F$, suppressed in the notation, is the relative homology
group
\[
  C_*(S^1\cdot\gamma):=H_*(\Lambda^{<c}
  \cup S^1\cdot\gamma,\Lambda^{<c}),
\]
where $c:=\sqrt{\EE(\gamma)}$.

A closed geodesic $\gamma\in\crit^+(\EE)$ is said to be isolated when
there exists a neighborhood $\UU\subset\Lambda$ of the critical circle
$S^1\cdot\gamma$ such that $\UU\cap\crit(\EE)=S^1\cdot\gamma$. It is
well known that the local homology group of any isolated closed
geodesic $\gamma$ is finitely generated, and there exist arbitrarily
small open neighborhoods $\WW\subset\Lambda$ of $S^1\cdot\gamma$,
sometimes called \emph{Gromoll--Meyer neighborhoods},
\cite{Gromoll:1969uz, Gromoll:1969vn}, such that the inclusion induces
an isomorphism
\begin{align*}
  C_*(S^1\cdot\gamma)
  \toup^{\cong} H_*(\Lambda^{<c}\cup \WW,\Lambda^{<c}).
\end{align*}
Moreover, for each $\epsilon>0$ small enough, the inclusion induces a
monomorphism
$C_*(S^1\cdot\gamma) \hookrightarrow
H_*(\Lambda^{<c+\epsilon},\Lambda^{<c})$.

\subsection{Proofs of Theorem~\ref{mt:bars_lower_bound} and
  Corollary~\ref{c:non_degenerate}}

Proposition~\ref{p:crossing_energy_period_1} has the following
consequence in terms of persistence of the total local homology of a
locally maximal, expansive, compact invariant subset for the geodesic
flow. Theorem~\ref{mt:bars_lower_bound} will be a rather direct
application of it.  In the statement, we use the notation from
Section~\ref{ss:invariant_subsets}.

\begin{lem}
\label{l:persistence}
Let $(M,g)$ be a closed Riemannian manifold and $I\subset SM$ be a
locally maximal, expansive, compact invariant subset for its geodesic
flow. Then, there exists $\delta>0$ such that, for each
$c\in\sigma(I)$, the following assertions hold:
\begin{itemize}

\item[(i)] The inclusion induces a monomorphism
\[
C_*(\PP^c(I)) \hookrightarrow H_*(\Lambda^{<c+\delta },\Lambda^{<c}).
\]

\item[(ii)] The total local homology $C_*(\PP^c(I))$ is contained in
  the image of the 
  homomorphism
\[
  H_*(\Lambda^{\leq c},\Lambda^{<c-\delta })
  \to H_*(\Lambda^{\leq c},\Lambda^{<c})
\]
induced by the inclusion.

\end{itemize}
\end{lem}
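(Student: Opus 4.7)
The plan is to deduce both statements from the crossing energy bound (Proposition~\ref{p:crossing_energy_period_1}) by working in the finite-dimensional approximation $L_k$ and exploiting the downward gradient flow of $E_k$. First I would fix parameters. Using Lemma~\ref{l:no_geodesics}, choose $r_1>0$ small enough that $\WW:=\VV(\PP^c(I),r_1)$ contains no critical points of $\EE$ outside $\PP^c(I)$; pick $r_0\in(0,r_1)$ and let $\delta'>0$ be the constant produced by Proposition~\ref{p:crossing_energy_period_1}. I would set $\delta:=\delta'/4$; since every $c\in\sigma(I)$ satisfies $c\geq 2\injrad(g)$, a direct computation yields the uniform inequalities $(c+\delta)^2<c^2+c\delta'$ and $(c-\delta)^2>c^2-c\delta'$. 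With $k:=\lceil 8(c+1)/\rho\rceil$ and the Milnor-type deformation retract of $\Lambda^{<a}$ onto $L_k^{<a}$ by piecewise-geodesic interpolation of loops, both statements reduce to their analogues for $E_k$ on $L_k$.

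The crucial dynamical consequence of Proposition~\ref{p:crossing_energy_period_1} is the following trichotomy for the downward gradient trajectory $\phi^s(x)$ of $E_k$ starting at $x\in L_k^{<c+\delta}$, where $\WW_0:=\VV(\PP^c(I),r_0)$: either (a) the trajectory enters $L_k^{<c-\delta}$ in finite time, or (b) it enters $\WW_0$ and remains there, accumulating on $\PP^c(I)$, or (c) it remains in $L_k^{<c+\delta}\setminus(\WW\cup L_k^{<c-\delta})$ and accumulates on the compact set $K^*:=\crit(E_k)\cap E_k^{-1}[(c-\delta)^2,(c+\delta)^2]\setminus\WW$. Indeed, any trajectory that crosses the shell $\WW\setminus\WW_0$ suffers an energy drop of at least $c\delta'$, which by our inequalities lands it in $L_k^{<c-\delta}$; and no trajectory can accumulate in $\overline{\WW_0}$ away from $\PP^c(I)$ because $\overline{\WW_0}\cap\crit(E_k)=\PP^c(I)$.

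For (i), I would select an open neighborhood $K'$ of $K^*$ with $\overline{K'}\cap\overline{\WW}=\varnothing$ (possible since $K^*$ is compact by Palais--Smale and disjoint from $\overline{\WW}$), and then construct a continuous retraction $\pi\colon L_k^{<c+\delta}\to L_k^{<c}\cup\WW_0\cup K'$ by integrating $-\chi\nabla E_k$ for a sufficiently long uniform time, with $\chi$ a smooth function equal to $0$ on $L_k^{<c-\delta}\cup\WW_0\cup K'$ and equal to $1$ outside a small open enlargement of this set. The trichotomy guarantees every trajectory reaches the target. Excising $L_k^{<c}\setminus(\WW_0\cup K')$ and using $\WW_0\cap K'=\varnothing$ yields the direct sum decomposition
\[
H_*(L_k^{<c}\cup\WW_0\cup K',L_k^{<c})\cong H_*(\WW_0,\WW_0\cap L_k^{<c})\oplus H_*(K',K'\cap L_k^{<c}),
\]
and a further excision identifies the first summand with $C_*(\PP^c(I))$, using that $\overline{\WW_0}\cap\crit(E_k)=\PP^c(I)$. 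Since $\pi$ is a retraction, the embedding of this summand into $H_*(L_k^{<c+\delta},L_k^{<c})$ is injective, which establishes (i).

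For (ii), I would represent $[\alpha]\in C_*(\PP^c(I))$ by a chain $\alpha$ supported in $\WW_0$ with $\partial\alpha\subset\WW_0\cap L_k^{<c}$, using the excision identification $C_*(\PP^c(I))\cong H_*(\WW_0,\WW_0\cap L_k^{<c})$. The compact set $\partial\alpha$ lies in $\overline{\WW}\cap\{E_k\leq M\}$ for some $M<c^2$, on which $\|\nabla E_k\|$ is uniformly bounded below since this set contains no critical points of $E_k$; this yields a uniform time $T>0$ after which every forward trajectory from $\partial\alpha$ has exited $\WW$, and Proposition~\ref{p:crossing_energy_period_1} then places $\phi^T(\partial\alpha)$ in $L_k^{<\sqrt{c^2-c\delta'}}\subset L_k^{<c-\delta}$. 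Setting
\[
\tilde\alpha:=\alpha+\phi^{[0,T]}(\partial\alpha),
\]
where $\phi^{[0,T]}(\partial\alpha)$ is the flow-trace prism (a chain contained in $L_k^{<c}$), the standard prism formula gives $\partial\tilde\alpha=\phi^T(\partial\alpha)\subset L_k^{<c-\delta}$ and $\tilde\alpha-\alpha\subset L_k^{<c}$, so $\tilde\alpha$ represents the same class as $\alpha$ in $H_*(L_k^{\leq c},L_k^{<c})$ and exhibits this class in the image of $H_*(L_k^{\leq c},L_k^{<c-\delta})$, proving (ii). The main obstacle I expect is verifying the trichotomy and the uniform exit time rigorously; both rest on the crossing energy bound enforcing a definite energy loss on every shell-crossing trajectory, combined with Palais--Smale compactness and the absence of non-$\PP^c(I)$ critical points in $\overline{\WW}$ to rule out trapped trajectories.
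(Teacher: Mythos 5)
Your overall strategy mirrors the paper's: reduce to $L_k$, use the crossing energy bound to prevent low-energy deformation through the shell around $\PP^c(I)$, and conclude with an excision/gradient-flow argument on the finite-dimensional model. However, two steps that you treat as routine are precisely where the paper has to work, and I think the first is a genuine gap.

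First, you identify $C_*(\PP^c(I))$ with $H_*(\WW_0,\WW_0\cap L_k^{<c})$ by ``a further excision, using that $\overline{\WW_0}\cap\crit(E_k)=\PP^c(I)$.'' Excision only gives $C_*(\PP^c(I))\cong H_*(\WW_0^{<c}\cup\PP^c(I),\WW_0^{<c})$; to replace the first term by all of $\WW_0$ you need a deformation of $\WW_0$ onto $\WW_0^{<c}\cup\PP^c(I)$ that stays \emph{inside} $\WW_0$. This is exactly the content of the Gromoll--Meyer lemma, and it does \emph{not} hold for an arbitrary isolating neighborhood: the $C^0$-ball $\WW_0=\VV(\PP^c(I),r_0)$ may lose points to its boundary under the gradient flow before their energy drops below $c^2$, so the isolation property alone is insufficient. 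The paper addresses this by placing genuine Gromoll--Meyer neighborhoods $\WW(S^1\cdot\gamma)\Subset\Lambda^{<c+\delta}\cap\VV(S^1\cdot\gamma,r_1)$ (and the even smaller $\YY(S^1\cdot\gamma)$) strictly inside the $C^0$-balls, and it is these, not the $C^0$-balls themselves, that carry the local homology identification. Your crossing-energy bound controls what happens when a trajectory crosses the shell $\WW\setminus\WW_0$, but it says nothing about trajectories that leave $\WW_0$ and remain in $\WW\setminus\WW_0$; the auxiliary nested neighborhoods are exactly what the paper uses to handle that regime.

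Second, your map $\pi$ is not a retraction onto $L_k^{<c}\cup\WW_0\cup K'$: the cutoff $\chi$ vanishes on $L_k^{<c-\delta}\cup\WW_0\cup K'$, not on $L_k^{<c}$, so $\pi$ moves points of $L_k^{<c}$. One can still conclude $\pi_*\circ j_*=\id$ on $H_*(L_k^{<c}\cup\WW_0\cup K',L_k^{<c})$ because the flow decreases energy and therefore preserves both the pair and the set $L_k^{<c}\cup\WW_0\cup K'$ itself, but this requires the argument rather than the word ``retraction.'' Relatedly, the ``sufficiently long uniform time'' is not free: you need a uniform positive lower bound on $\|\nabla E_k\|$ on the set where $\chi\equiv 1$ in the energy band $[(c-\delta)^2,(c+\delta)^2]$, and your trichotomy alone does not produce a uniform exit time because trajectories shadowing the stable manifolds of points of $K^*$ may approach $K'$ arbitrarily slowly. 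The paper extracts an explicit $s_0=((c+\delta)^2-c^2)/\lambda^2$ from a gradient lower bound $\lambda$ obtained on $\VV_2\setminus\YY$ and carries out the corresponding case analysis; you would need to do the analogous bookkeeping with your $\chi$. The prism argument you propose for part (ii) is in the right spirit, but it again rests on representing a class of $C_*(\PP^c(I))$ by a chain supported in $\WW_0$, which brings you back to the Gromoll--Meyer identification.
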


\begin{proof}
  As before, we denote by $\rho:=\injrad(g)>0$ the injectivity radius.
  By Lemma~\ref{l:no_geodesics}, we can fix $R\in(0,\rho/16)$ small
  enough so that, for each $\gamma\in\PP(I)$, the $C^0$-open
  neighborhood $\VV(S^1\cdot\gamma,R)$ does not contain closed
  geodesics other than those in $S^1\cdot\gamma$ or constant curves,
  i.e.,
\begin{align}
\label{e:only_one_critical_circle}
 \VV(S^1\cdot\gamma,R)\cap\crit(\EE)=S^1\cdot\gamma,
 \qquad
 \forall \gamma\in\PP(I).
\end{align}
Up to replacing $R$ with $R/2$, we even obtain
\begin{align*}
  \VV(S^1\cdot\gamma,R)\cap \VV(S^1\cdot\zeta,R)=\varnothing,
  \qquad\forall\gamma,\zeta\in\PP(I)
  \mbox{ with }S^1\cdot\gamma\neq S^1\cdot\zeta.
\end{align*}

Take radii $r_1, r_2$ with $0<r_1<r_2<R$, where $r_2$ is small enough
so that, according to Proposition~\ref{p:crossing_energy_period_1},
there exists a constant $\delta>0$ with the following property: for
each closed geodesic $\gamma\in\PP(I)$ and for
$$
k:=\lceil (\EE(\gamma)^{1/2}+1)8/\rho\rceil,
$$
any anti-gradient flow segment $u\colon [s_0,s_1]\to L_k$ of $E_{k}$
that crosses the shell
$\VV(S^1\cdot\gamma,r_2)\setminus \VV(S^1\cdot\gamma,r_1)$ satisfies
the condition that
\begin{align}
\label{e:crossing_Zi_energy}
  E_{k}(u(s_0))-E_{k}(u(s_1))\geq \sqrt{\EE(\gamma)}\,
\frac{2(\rho+1)}{\rho}\,\delta. 
\end{align}
Compared with the statement of
Proposition~\ref{p:crossing_energy_period_1}, here we included a
multiplicative factor ${2(\rho+1)}/{\rho}$ in front of the constant
$\delta$ for aesthetic reasons, as it will simplify
inequality~\eqref{e:crossing_Zi}. 
Recall that ``\emph{crossing the shell
  $\VV(S^1\cdot\gamma,r_2)\setminus \VV(S^1\cdot\gamma,r_1)$}'' means
that $u(s_0')\in\VV(S^1\cdot\gamma,r_1)$ and
$u(s_1')\not\in\VV(S^1\cdot\gamma,r_{2})$ for some
$s_0',s_1'\in[s_0,s_1]$.  If needed, we choose $\delta$ so that
\[0<\delta<1.\]

From now on, we fix a spectral value $c\in\sigma(I)$ and work within
the sublevel set $\Lambda^{<c+1}$. We set the discretization parameter
to be $k:=\lceil (c+1)8/\rho\rceil$.  For each $\gamma\in\PP^c(I)$,
any anti-gradient flow segment $u\colon [s_0,s_1]\to L_k^{<c+1}$ of
$E_{k}$ that crosses the shell
$\VV(S^1\cdot\gamma,r_2)\setminus \VV(S^1\cdot\gamma,r_1)$ satisfies
the inequality~\eqref{e:crossing_Zi_energy}, which can be rewritten as
\begin{align*}
  \left( \sqrt{E_{k}(u(s_0))}-\sqrt{E_{k}(u(s_1))} \right)
  \left( \sqrt{E_{k}(u(s_0))}+\sqrt{E_{k}(u(s_1))} \right)
  \geq c\,
  \frac{2(\rho+1)}{\rho}\,\delta.
\end{align*}
Since
$$
\sqrt{E_{k}(u(s_0))}+\sqrt{E_{k}(u(s_1))}<2(c+1)
$$
and $c\geq\rho$, we conclude that
\begin{align}
\label{e:crossing_Zi}
\sqrt{E_{k}(u(s_0))}-\sqrt{E_{k}(u(s_1))}\geq \delta.
\end{align}

Next, consider Gromoll--Meyer neighborhoods $\WW(S^1\cdot\gamma)$ of
each critical circle $S^1\cdot\gamma\subset\PP^c(I)$, whose defining
property was recalled at the beginning of this section.  We require
such neighborhoods to be small enough so that
\begin{align*}
\WW(S^1\cdot\gamma)\Subset\Lambda^{<c+\delta}\cap\VV(S^1\cdot\gamma,r_1).
\end{align*}
Here the notation $A\Subset B$ means, as usual, that
$\overline A\subset \interior(B)$.  We also choose another
neighborhood $\YY(S^1\cdot\gamma)\subset\WW(S^1\cdot\gamma)$ of
$S^1\cdot\gamma$ that is so small that
\begin{align}
\label{e:r_gamma}
  r_\gamma:=\inf\big\{ d_k(\xx,\yy)\ \big|\
  \xx\in L_k \cap\partial\YY(S^1\cdot\gamma),\
  \yy\in L_k \cap\partial\WW(S^1\cdot\gamma)\}>0.
\end{align}
We set
\begin{align*}
  \YY&:=\bigcup_{S^1\cdot\gamma}\YY(S^1\cdot\gamma),\\
  \WW&:=\bigcup_{S^1\cdot\gamma}\WW(S^1\cdot\gamma),\\
  \VV_i&:=\bigcup_{S^1\cdot\gamma}\VV(S^1\cdot\gamma,r_i)
         \text{ for } i=1,2,\\ 
  \VV&:=\bigcup_{S^1\cdot\gamma}\VV(S^1\cdot\gamma,R),\\
  \XX&:=\Lambda^{<c+\delta}\setminus\VV_1,
\end{align*}
where the unions range over the critical circles
$S^1\cdot\gamma\subset\PP^c(I)$. Then we have
\begin{align*}
\YY\Subset\WW\Subset\VV_1\Subset\VV_2\Subset\VV,
\qquad
\crit(\EE)\cap\VV=\PP^c(I),
\qquad
\overline{\WW}\cap\overline{\XX}=\varnothing.
\end{align*}
By excision, the inclusion induces an isomorphism
\begin{align*}
  H_*(\Lambda^{<c}\cup\WW,\Lambda^{<c})\oplus
  H_*(\Lambda^{<c}\cup\XX,\Lambda^{<c})
 \toup^{\cong}
 H_*(\Lambda^{<c}\cup\WW\cup\XX,\Lambda^{<c}).
\end{align*}
By the defining property of Gromoll--Meyer neighborhoods, the inclusion
gives rise to an isomorphism
\begin{align*}
 C_*(\PP^c(I))\toup^{\cong}H_*(\Lambda^{<c}\cup\WW,\Lambda^{<c}).
\end{align*}
All together, the inclusion induces a monomorphism
\begin{align*}
 C_*(\PP^c(I))
 \hookrightarrow
 H_*(\Lambda^{<c}\cup\WW\cup\XX,\Lambda^{<c}),
\end{align*}
which fits into the following commutative diagram, all of whose arrows
are induced by inclusions.
\begin{equation*}
\begin{tikzcd}[row sep=large]
C_*(\PP^c(I))
\arrow[r,hookrightarrow]
\arrow[dr]
&H_*(\Lambda^{<c}\cup\WW\cup\XX,\Lambda^{<c})
\arrow[d,"i"]\\
&H_*(\Lambda^{<c+\delta},\Lambda^{<c})
\end{tikzcd}
\end{equation*}
In order to prove assertion (i) of Lemma \ref{l:persistence}, it
remains to show that the homomorphism $i$ is injective. To this end, it
is enough to build a continuous homotopy
\begin{align*}
 h_s\colon\Lambda^{<c+\delta}\to\Lambda^{<c+\delta},\ s\in[0,s_0],
\end{align*}
with the following properties:
\begin{itemize}

\item[(a)] $h_0=\id$,

\item[(b)] $\EE\circ h_s\leq\EE$ for all $s\in[0,s_0]$,

\item[(c)] $h_{s_0}(\Lambda^{<c+\delta})\subset\Lambda^{<c}\cup\WW\cup\XX$.

\end{itemize}
Indeed, these three conditions readily imply that the homomorphism 
\[(h_{s_0})_*\colon H_*(\Lambda^{<c+\delta},\Lambda^{<c}) \to
  H_*(\Lambda^{<c}\cup\WW\cup\XX,\Lambda^{<c})\] is a left inverse of
$i$, i.e., $(h_{s_0})_*\circ i=\id$, and hence $i$ is injective. We
shall build the homotopy $h_s$ in a few steps.

We shall 
work with the finite-dimensional loop space $L_k$.  As we already
mentioned in Subsection~\ref{ss:period_1}, with a slight abuse of
notation, we 
may view the embedding
$\iota_{1/k}\colon L_k\hookrightarrow \Lambda$ as an inclusion, and
therefore
$L_k$ as a subspace of $\Lambda$.  For each
$\gamma\in\Lambda^{< c+\delta}$ parametrized proportionally to arc length,
we have
\begin{align*}
  d(\gamma(t_1),\gamma(t_2))\leq|t_1-t_2|(c+\delta)
  < \frac\rho8,
  \qquad\forall t_1,t_2\in\R\mbox{ with }|t_1-t_2|\leq1/k.
\end{align*}
In particular, this holds for each closed geodesic
$\gamma\in\crit(\EE)\cap \Lambda^{<c+\delta}$, and hence
\begin{align*}
  d(\zeta(t_1),\zeta(t_2))<2r+\tfrac\rho8<\tfrac\rho4,
  \quad\ \ \
  \forall \zeta\in\VV(S^1\cdot\gamma,r),\ t_1,t_2\in\R
  \mbox{ with }|t_1-t_2|\leq1/k.
\end{align*}
Since the radius $R$ that we fixed
at the beginning of the proof is smaller than $\rho/16$, this inequality
implies that the closure $\overline{L_k \cap\VV(S^1\cdot\gamma,R)}$ is
contained in the interior of $L_k$, i.e.,
\begin{align}
\label{e:VV_inside_int_PkM}
 \overline{L_k \cap\VV(S^1\cdot\gamma,R)}
 \subset
 \interior(L_k).
\end{align}

We define the continuous homotopy
$h_s\colon\Lambda^{<c+\delta}\to\Lambda^{<c+\delta}$ for $s\in[0,1]$
as follows: for each $\gamma\in\Lambda^{<c+\delta}$ and $s\in[0,1]$,
the curve $\gamma_s:=h_s(\gamma)$ is given by 
$\gamma_s|_{[s,1]}=\gamma|_{[s,1]}$, whereas $\gamma_s|_{[0,s]}$ is
a constant speed reparametrization of $\gamma|_{[0,s]}$. This
homotopy clearly satisfies condition (a). It also meets condition (b),
for
\begin{align*}
  \EE(\gamma_s)
  &=
    \int_0^s \|\dot\gamma_s\|^2_g\, dt + \int_s^1 \|
    \dot\gamma_s\|^2_g\, dt 
    =
    \frac 1s \left(\int_0^s \|\dot\gamma_s\|_g\, dt\right)^2 + \int_s^1
    \|
    \dot\gamma_s\|^2_g\, dt\\
  &=
    \frac 1s \left(\int_0^s \|\dot\gamma\|_g\, dt\right)^2 + \int_s^1
    \|
    \dot\gamma\|^2_g\, dt
    \leq
    \int_0^s \|\dot\gamma\|^2_g\, dt + \int_s^1 \|
    \dot\gamma\|^2_g\, dt 
    = \EE(\gamma).
\end{align*}
The image $h_1(\Lambda^{<c+\delta})\subset\Lambda^{<c+\delta}$
consists of curves parametrized with constant speed, and therefore
\begin{align*}
  d(\gamma(t_1),\gamma(t_2))<
  \frac{c+\delta}k\leq\frac\rho8,
  \quad
  \forall \gamma\in h_1(\Lambda^{<c+\delta}),\ t_1,t_2\in[0,1]
  \mbox{ with }|t_1-t_2|\leq
  \frac1k.
\end{align*}

We extend the homotopy
$h_s\colon\Lambda^{<c+\delta}\to\Lambda^{<c+\delta}$ 
to $s\in[1,2]$ as follows: for each $\gamma\in\Lambda^{<c+\delta}$,
$s\in[0,1]$, and $i\in\{0,\ldots ,k-1\}$, set
\[h_{1+s}(\gamma)|_{[(i+s)/k,(i+1)/k]}:=h_{1}(\gamma)|_{[(i+s)/k,(i+1)/k]},\]
whereas $h_{1+s}(\gamma)|_{[i/k,(i+s)/k]}$ is the unique shortest
geodesic segment (of length less than $\rho/8$) joining
$h_{1+s}(\gamma)(i/k)$ and $h_{1+s}(\gamma)((i+1)/k)$. Clearly, by
replacing a portion of a curve with the shortest geodesic segment we
do not increase the energy and so the homotopy $h_s$ still satisfies
condition (b) for $s\in[1,2]$.  Moreover,
\begin{align*}
  h_2(\Lambda^{<c+\delta})\subset L_k\cap\Lambda^{<c+\delta}.
\end{align*}

By~\eqref{e:VV_inside_int_PkM}, we can find a smooth function
$f\colon L_k\to[0,1]$ such that
\begin{align*}
  \supp(f)\subset\overline{L_k \cap\VV}\subset\interior(L_k),
  \qquad f|_{L_k\cap\VV_2}\equiv1.
\end{align*}
We denote by $\psi_s\colon L_k\to L_k$, $s\geq0$, the flow of the
vector field $-f\nabla E_k$. Since the latter vector field is
supported in the interior of $L_k$, the flow $\psi_s$ is defined for
all times $s\in\R$. Moreover, within $L_k\cap\VV_2$, the flow
$\psi_s$ coincides with the anti-gradient flow of $E_k$ with the same
time-parametrization. We extend the definition of $h_s$ to all
$s\geq0$ by setting
\begin{align*}
 h_{2+s}:=\psi_s\circ h_2,\qquad\forall s\geq0.
\end{align*}
Clearly, condition (b) still holds for all $s\geq0$.

It remains to find $s_0>0$ such that $h_{2+s_0}$ satisfies condition
(c). Indeed, we will find $s_0>0$ such that
\begin{align}
\label{e:s0}
 \psi_{s_0}(L_k^{<c+\delta})\subset \Lambda^{<c}\cup\WW\cup\XX.
\end{align}
The critical subset $\crit(\EE)\cap \EE^{-1}(c)\subset\Lambda$ is
compact. This, along with
\eqref{e:only_one_critical_circle}, implies that $\PP^c(I)$ consists
of finitely many critical circles. Therefore, considering the radii
$r_\gamma$ defined by \eqref{e:r_gamma}, we have
\begin{align*}
  r:=\inf_{\gamma\in\PP^c(I)} r_\gamma = \inf\big\{ d_k(\xx,\yy)\ \big|\
  \xx\in L_k \cap\partial\YY,\  \yy\in L_k \cap\partial\WW\}> 0.
\end{align*}
We set
\begin{align*}
 \lambda':=\inf_{L_k\cap\VV_2\setminus\YY} \|\nabla E_k\|_{g_k}>0.
\end{align*}
For any anti-gradient flow trajectory
$u\colon [s_1,s_2]\to L_k^{<c+1}$ of $E_k$ crossing the shell
$\WW\setminus\YY$ (i.e., such that $u(s_1')\in\YY$ and
$u(s_2')\not\in\WW$ for some $s_1',s_2'\in[s_1,s_2]$) we have
\begin{align*}
 E_k(u(s_1))-E_k(u(s_2))\geq \lambda' r,
\end{align*}
and, therefore,
\begin{equation}
\label{e:crossing_W_minus_Y}
\begin{split}
  \sqrt{E_k(u(s_1))}-\sqrt{E_k(u(s_2))}
 & \geq 
 \frac{\lambda' r}{\sqrt{E_k(u(s_1))}+\sqrt{E_k(u(s_2))}} \\
 & \geq
 \frac{\lambda' r}{2(c+1)}
 =: \sigma.
\end{split}
\end{equation}
Let
\begin{align*}
\lambda
:=
  \min\left\{ \lambda' , \inf_{L_k^{\geq c+\sigma}\cap\VV_2} \|
  \nabla E_k\|_{g_k} \right\}>0,
\end{align*}
where 
$L_k^{\geq c+\sigma}:=E_k^{-1}[(c+\sigma)^2,\infty)$. The desired 
$s_0$ satisfying~\eqref{e:s0} is given by
\begin{align*}
 s_0:=\frac{(c+\delta)^2- c^2}{\lambda^2}.
\end{align*}
Let us show that \eqref{e:s0} indeed holds for this choice of $s_0$:
\begin{itemize}
\item For each $\xx\in L_k^{<c+\delta}\setminus\VV_2$, if
  $\psi_{s}(\xx)\in\VV_1$ for some $s>0$, then the anti-gradient flow
  segment $\psi_{[0,s]}(\xx)$ crosses the shell
  $\VV_{2}\setminus\VV_1$, and~\eqref{e:crossing_Zi} implies
\begin{align*}
  \sqrt{ E_k(\psi_{s}(\xx))}\leq \sqrt{E_k(\xx)}-
  \delta<c+\delta-\delta=c.
\end{align*}
In particular,
\begin{align*}
  \psi_{s_0}(L_k^{<c+\delta}\setminus\VV_2)
  \subset\Lambda^{<c}\cup\XX.
\end{align*}

\item For each $\xx\in L_k^{<c+\delta}\cap\VV_2$, we need to consider
  different subcases. If there exists $s\in(0,s_0]$ such that
  $\psi_{s}(\xx)\not\in\VV_2$, we can apply the previous point and
  infer that \[\psi_{s_0}(\xx)\in\Lambda^{<c}\cup\XX.\] Let us now
  consider the case where
\begin{align}
\label{e:in_V2_minus_<c}
\psi_{[0,s_0]}(\xx)\subset\VV_2\setminus \Lambda^{<c}. 
\end{align}
We claim that there exists $s_1\in[0,s_0]$ such that 
\[\psi_{s_1}(\xx)\in\YY^{<c+\sigma},\]
where as usual $\YY^{<c+\sigma}:=\YY\cap\Lambda^{<c+\sigma}$.
Indeed, if such an $s_1$ did not exist, we would have
$\|\nabla E_k(\psi_s(\xx))\|_{g_k}\geq\lambda$ for all $s\in[0,s_0]$
and infer that 
\begin{align*}
 \qquad 
 \sqrt{E_k(\psi_{s_0}(\xx))}
 &=
 \sqrt{E_k(\xx) - \int_0^{s_0} \|\nabla E_k(\psi_s(\xx))\|_{g_k}^2ds}\\
 &<
 \sqrt{(c+\delta)^2 - s_0\lambda^2}
 =
 c,
\end{align*}
which contradicts~\eqref{e:in_V2_minus_<c}. We also claim that
\begin{align*}
 \psi_{[s_1,s_0]}(\xx)\subset\WW.
\end{align*}
Indeed, if we had $\psi_{s}(\xx)\not\in\WW$ for some $s\in(s_1,s_0]$,
then the anti-gradient flow segment $\psi_{[s_1,s_0]}(\xx)$ would
cross the shell $\WW\setminus\YY$. Then \eqref{e:crossing_W_minus_Y}
would imply that 
\begin{align*}
 \sqrt{E_k(\psi_{s_0}(\xx))}
 \leq
 \sqrt{E_k(\psi_{s_1}(\xx))}-\sigma
 <
 c+\sigma-\sigma
 =
 c,
\end{align*}
contradicting once again~\eqref{e:in_V2_minus_<c}.  Overall, we have
proved that
\begin{align*}
 \psi_{s_0}(\xx)\in\Lambda^{<c}\cup\WW\cup\XX,
\end{align*}
and hence \eqref{e:s0} holds.
\end{itemize}
This completes the proof of assertion (i) of Lemma
\ref{l:persistence}. The proof of assertion~(ii) is similar.
\end{proof}

\begin{proof}[Proof of Theorem~\ref{mt:bars_lower_bound}]
  Let $\delta>0$ be the constant given by
  Lemma~\ref{l:persistence}. For each $c\in\sigma(I)$,
  Lemma~\ref{l:persistence}(i) implies that the inclusion induces a
  monomorphism
\begin{align}
\label{e:persistence_above}
 C_*(\PP^c(I))\hookrightarrow H_*(\Lambda^{<c+\delta},\Lambda^{<c}).
\end{align}
In particular, we have the monomorphism
\[C_*(\PP^c(I))\hookrightarrow H_*(\Lambda^{\leq c},\Lambda^{<c}),\]
where $\Lambda^{\leq c}:=\EE^{-1}[0,c^2]$. Henceforth, we will
view the total local homology $C_*(\PP^c(I))$ as a vector subspace
of $H_*(\Lambda^{\leq c},\Lambda^{<c})$.  Consider the exact triangle
\begin{equation}
\label{e:exact_triangle}
\begin{tikzcd}[row sep=large]
H_*(\Lambda^{<c},\Lambda^0)
\arrow[r,"i"]
&H_*(\Lambda^{\leq c},\Lambda^0)
\arrow[d,"j"]\\
&H_*(\Lambda^{\leq c},\Lambda^{<c})
\arrow[ul,"\partial_*"]
\end{tikzcd}
\end{equation}
where all arrows except the connecting homomorphism $\partial_*$ are
induced by the inclusions. Let $C\subset H_*(\Lambda^{\leq c},\Lambda^0)$ be
a complement of $\im(i)$, i.e.,
$H_*(\Lambda^{\leq c},\Lambda^0)=\im(i)\oplus C$.  Notice that $j|_{C}$ is 
a monomorphism and set 
\[B:=j|_{C}^{-1}(C_*(\PP^c(I))).\] For each $b\in(c,\infty]$, consider
the homomorphism 
\begin{align*}
 j_b\colon B\to H_*(\Lambda^{<b},\Lambda^0)
\end{align*}
induced by the inclusion. If $B$ is non-trivial, there exist
$b_k>\ldots >b_1>c$, with $b_k\in(c,\infty]$, and a direct sum
decomposition $B=B_{b_k}\oplus\ldots \oplus B_{b_1}$ such that
\begin{align*}
 \ker(j_b)
 =
 \left\{
  \begin{array}{@{}ll}
    \{0\} & \mbox{if }b\in(c,b_1],\vspace{5pt} \\ 
    B_{b_i}\oplus\ldots\oplus B_{b_1}
           & \mbox{if }b\in(b_i,b_{i+1}],\ i\in\{1,...,k-1\},\vspace{5pt} \\ 
    B &  \mbox{if }b>b_k.\\ 
  \end{array}
 \right.
\end{align*}
This readily implies that $([c,b_i),n_i)\in\B$ for all
$i=1,\ldots ,k$, where $n_i\geq\dim B_{b_i}$.  Moreover, the
injectivity of~\eqref{e:persistence_above} together with the following
commutative diagram whose arrows are induced by inclusions
\[
\begin{tikzcd}[row sep=large]
B
\arrow[r,"j_{c+\delta}"]
\arrow[d,"j"',hookrightarrow]
&H_*(\Lambda^{< c+\delta},\Lambda^0)
\arrow[d]\\
C_*(\PP^c(I))
\arrow[r,hookrightarrow]
&H_*(\Lambda^{<c+\delta},\Lambda^{<c})
\end{tikzcd}
\]
implies that $b_1>c+\delta$. Therefore, all bars $[c,b_i)$ have size at
least $\delta$.

Now, consider the subgroup 
\begin{align*}
 A:=\partial_*(C_*(\PP^c(I)))\subset H_*(\Lambda^{<c},\Lambda^0)
\end{align*}
and, for each $a<b$, the homomorphisms 
\begin{align*}
 i_{b,a}\colon H_*(\Lambda^{<a},\Lambda^0)\to H_*(\Lambda^{<b},\Lambda^0)
\end{align*}
induced by the inclusion. If $A$ is non-trivial, there exist
$a_1<\ldots <a_h<c$ with $a_1>0$ and a direct sum decomposition
$A=A_{a_1}\oplus\ldots\oplus A_{a_h}$ such that
\begin{align*}
 \im(i_{c,a})\cap A
 =
 \left\{
  \begin{array}{@{}ll}
    \{0\} & \mbox{if }a\in(0,a_1],\vspace{5pt} \\ 
    A_{a_1}\oplus\ldots \oplus A_{a_i}
           &  \mbox{if }a\in(a_i,a_{i+1}],\ i\in\{1,...,h-1\},\vspace{5pt} \\ 
    A &  \mbox{if }a\in(a_h,c].\\ 
  \end{array}
 \right.
\end{align*}
The exact triangle~\eqref{e:exact_triangle} implies that
$A\subseteq\ker i_{b,c}$ for all $b>c$. Therefore,
$([a_i,c),m_i)\in\B$ for all $i=1,\ldots ,h$, where
$m_i\geq\dim A_{a_i} $.  By Lemma~\ref{l:persistence}(ii),
$C_*(\PP^c(I))$ is contained in the image of the homomorphism
\[
  H_*(\Lambda^{\leq c},\Lambda^{<c-\delta})
  \to H_*(\Lambda^{\leq c},\Lambda^{<c})
\]
induced by the inclusion. Inserting this homomorphism into the
following commutative diagram, whose horizontal rows are induced by
the inclusions,
\begin{equation*}
\begin{tikzcd}[row sep=large]
H_*(\Lambda^{\leq c},\Lambda^{<c-\delta})
\arrow[r]\arrow[d,"\partial_*"']
&H_*(\Lambda^{\leq c},\Lambda^{<c})
\arrow[d,"\partial_*"]\\
H_*(\Lambda^{<c-\delta},\Lambda^0)
\arrow[r]
&H_*(\Lambda^{<c},\Lambda^0)
\end{tikzcd}
\end{equation*}
we readily see that $c>a_h+\delta$. Therefore, all bars $[a_i,c)$ have
size at least $\delta$.  Finally, the exact
triangle~\eqref{e:exact_triangle} also implies that
$C_*(\PP^c(I)) \cong B\oplus A$, and hence
\[
  n_1 + \ldots + n_k + m_1 + \ldots + m_h
  \geq \dim B+\dim A=\dim C_*(\PP^c(I)).
\qedhere
\]
\end{proof}

\begin{proof}[Proof of Corollary \ref{c:non_degenerate}]
  Since the compact invariant subset $I\subset SM$ is locally maximal
  and expansive, 
  every closed geodesic $\gamma\in\PP(I)$ is isolated. For each
  $c\in\sigma(I)$, the total local homology of $\PP(I)$ splits as a
  direct sum
\begin{align*}
  C_*(\PP^c(I))
  \cong
  \!\!\!\bigoplus_{S^1\cdot\gamma\subset\PP^c(I)}
  \!\!\!\!\! C_*(S^1\cdot\gamma),
\end{align*}
due to the excision property of singular homology.

If a critical circle $S^1\cdot\gamma\subset\crit^+(\EE)$ is
non-degenerate and prime, its local homology with any coefficient has
dimension $\dim C_*(S^1\cdot\gamma)=2$. This is well known to the
experts (see, e.g., \cite[Prop.~3.8(i)]{Bangert:2010aa}), but we
sketch the argument for the reader's convenience. Let
$\E^-\subset T_{\gamma}\Lambda$ be a vector subspace of maximal
dimension over which the Hessian $d^2\EE(\gamma)$ is negative
definite. The dimension of $\E^-$ is equal to Morse index of $\gamma$,
which is finite. Consider an embedded ball $\NN_\gamma\subset\Lambda$
of dimension $\dim \NN_\gamma =\dim \E^-$ containing $\gamma$ in its
interior and having tangent space $T_\gamma \NN_\gamma=\E^-$. Up to
shrinking $\NN_\gamma$ around $\gamma$, we have
$\EE(\zeta)<c:=\EE(\gamma)$ for all
$\zeta\in\NN_\gamma\setminus\{\gamma\}$, and the map
$S^1\times \NN_\gamma\to\Lambda$, $(t,\gamma)\mapsto t\cdot\gamma$ is
a homeomorphism onto a neighborhood $\NN\subset\Lambda$ of the
critical circle $S^1\cdot\gamma$. The inclusion
$(\NN,\NN^{<c})\hookrightarrow(\Lambda^{<c}\cup
S^1\cdot\gamma,\Lambda^{<c})$ induces an isomorphism in homology, and
hence
\begin{align*}
 C_*(S^1\cdot\gamma)
 &\cong
 H_*(\NN,\NN^{<c})\\
 &\cong
   H_*(S^1\times \NN_\gamma,S^1\times
   \NN_\gamma\setminus\{\gamma\})\\
 &\cong
 H_{*-\dim \E^-}(S^1)\\
 &\cong
 \F\oplus\F.
\end{align*}

All together, we have proved that $\dim C_*(\PP^c(I))\geq 2 n_c(I)$,
where $n_c(I)$ denotes the number of non-degenerate, prime, critical
circles in $\PP^c(I)$. Combined with
Theorem~\ref{mt:bars_lower_bound}, this
implies 
Corollary \ref{c:non_degenerate}: there exists $\delta>0$ such that
every $c\in\sigma(I)$ is a boundary point of at least $2n_c(I)$ bars
of size at least $\delta$ in the closed geodesics barcode
$\B$.
\end{proof}

\subsection*{Acknowledgments}
The authors are grateful to Marcelo Alves, Gonzalo Contreras, Sylvain
Crovisier, Matthias Meiwes, Gabriel Paternain, and Omri Sarig for
useful discussions. Parts of this work were carried out while the
first two authors were visiting the \'Ecole normale sup\'erieure de
Lyon, France, in May 2022, funded by the ANR CoSyDy (ANR-CE40-0014),
and also during the \emph{Symplectic Dynamics Beyond Periodic Orbits}
workshop at the Lorentz Center, Leiden, the Netherlands, in August
2022. The authors thank these institutes for their warm hospitality,
support, and stimulating working environment.

\bibliography{_biblio}
\bibliographystyle{amsalpha}

\end{document}